\documentclass[12pt]{amsart}
\usepackage[margin=1in]{geometry}  % set the margins to 1in on all sides
\usepackage{graphicx}              % to include figures
\usepackage{amsmath}               % great math stuff
\usepackage{amsfonts}              % for blackboard bold, etc
\usepackage{amsthm}                % better theorem environments
\usepackage{caption} 
\usepackage{subcaption}
\usepackage{tikz}
\usepackage{enumitem}

%\usepackage{sectsty}
%\sectionfont{\fontsize{12}{15}\selectfont}
%\subsectionfont{\fontsize{12}{15}\selectfont}

\newtheorem{thm}{Theorem}[section]
\newtheorem{lem}[thm]{Lemma}

\newtheorem{prop}[thm]{Proposition}
\newtheorem{cor}[thm]{Corollary}

\theoremstyle{remark}
\newtheorem{remark}[thm]{Remark}

\theoremstyle{definition}
\newtheorem{definition}[thm]{Definition}

  % for bolding symbols
      % for Real numbers
      % for Integers

\begin{document}

\title{Tight contact structures on some plumbed 3-manifolds}
\author{Jonathan Simone}

\maketitle 

\begin{abstract}
In this article, we prove a generalization of Lisca-Matic's result in \cite{liscamatictightstrs} to Stein cobordisms and develop a method for distinguishing certain Stein cobordisms using rotation numbers. Using these results along with standard techniques from convex surface theory and classifications of tight contact structures on certain 3-manifolds due to Honda, we then classify the tight contact structures on a certain class of plumbed 3-manifolds that bound non-simply connected 4-manifolds. Moreover, we give descriptions of the Stein fillings of the Stein fillable contact structures.
\end{abstract}

\section{Introduction}\label{intro}

There are many classification results of tight contact structures on the boundaries of simply connected plumbings of $D^2-$bundles over $S^2$. For example, tight contact structures on small Seifert fibered spaces have been classified in \cite{ghigginiliscastipsicz}, \cite{ghigginischonenberger}, and \cite{wulegvertcirc}. These Seifert fibered spaces bound plumbings whose associated graphs are ``star-shaped". The proofs of these results are broken into two parts. First an upper bound $k$ for the number of tight contact structures is given using convex surface theory and applications of Honda's classifications of tight contact structures on the ``building blocks" $S^1\times D^2,$ $T^2\times I,$ and $S^1\times\Sigma$, where $\Sigma$ is a pair of pants (\cite{hondatight1}, \cite{hondatight2}). Then $k$ is shown to be a lower bound by exhibiting $k$ distinct Stein diagrams, which induce $k$ nonisotopic contact structures, by Lisca-Matic's result in \cite{liscamatictightstrs}.

This method clearly works well if the contact structures are Stein fillable. However, if the contact structures in question are not Stein fillable, then Lisca-Matic's result does not apply. By considering the Ozsv{\' a}th-Szab{\'o} contact invariant with $\omega$-twisted coefficients, we prove the following, which is a generalization of a result of Plamenevskaya in \cite{plamenevskaya}. 

\begin{thm} Suppose $(Y,\xi)$ is a contact manifold and $[\omega]\in H^2(Y;\mathbb{R})$ is an element such that $c(\xi,[\omega])$ is nontrivial. Let $(W,J_i)$ be a Stein cobordism from $(Y,\xi)$ to $(Y',\xi_i)$ for $i=1,2$. If the spin$^c$ structures induced by $J_1$ and $J_2$ are not isomorphic, then there exists an element $[\eta]\in H^2(Y';\mathbb{R})$ such that $c(\xi_1,[\eta])$ and $c(\xi_2,[\eta])$ are linearly independent.\label{cobordismthm}\end{thm}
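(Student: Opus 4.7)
The plan is to adapt Plamenevskaya's argument \cite{plamenevskaya} for Stein fillings to the Stein cobordism setting, using the naturality of the twisted Ozsv\'ath--Szab\'o contact invariant under Stein cobordism maps.

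First, I would invoke the following naturality property: for any class $[\Omega]\in H^2(W;\mathbb{R})$, the Stein cobordism $(W,J_i)$ induces an $[\Omega]$-twisted cobordism map
\[
F_{-W,\mathfrak{s}_{J_i},[\Omega]}\colon \widehat{HF}(-Y',[\Omega|_{Y'}])\longrightarrow \widehat{HF}(-Y,[\Omega|_Y])
\]
that carries $c(\xi_i,[\Omega|_{Y'}])$ to $c(\xi,[\Omega|_Y])$ up to a unit in the twisted coefficient ring. Using the long exact sequence of the pair $(W,Y)$ together with Poincar\'e--Lefschetz duality $H^3(W,Y;\mathbb{R})\cong H_1(W,Y';\mathbb{R})$, I would choose a class $[\Omega]\in H^2(W;\mathbb{R})$ restricting to $[\omega]$ on $Y$; should the lifting obstruction not vanish automatically, one perturbs $[\omega]$ within a small neighborhood on which $c(\xi,\cdot)$ remains nonzero. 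Setting $[\eta]:=[\Omega|_{Y'}]$, the naturality equation combined with $c(\xi,[\omega])\neq 0$ immediately forces $c(\xi_i,[\eta])\neq 0$ for both $i=1,2$.

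To establish linear independence, suppose for contradiction that $a\cdot c(\xi_1,[\eta])+b\cdot c(\xi_2,[\eta])=0$ for some nontrivial $(a,b)$ in the coefficient ring. If the induced spin$^c$ structures $\mathfrak{s}_{\xi_i}=\mathfrak{s}_{J_i}|_{Y'}$ differ, then the two invariants lie in distinct spin$^c$ summands of $\widehat{HF}(-Y',[\eta])$ and the relation contradicts their nontriviality. Otherwise, the difference $\alpha:=\mathfrak{s}_{J_1}-\mathfrak{s}_{J_2}\in H^2(W;\mathbb{Z})$ is nonzero by the non-isomorphism hypothesis yet restricts trivially to both boundary components, so it lifts to a class in $H^2(W,\partial W;\mathbb{R})$. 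Replacing $[\Omega]$ by $[\Omega]+t\alpha$ for a real parameter $t$ leaves $[\omega]$ and $[\eta]$ unchanged but modifies the cobordism maps $F_{-W,\mathfrak{s}_{J_i},[\Omega]+t\alpha}$ by weights depending on the pairing of $t\alpha$ with the spin$^c$ structure $\mathfrak{s}_{J_i}$; the non-isomorphism ensures these weights genuinely differ in $i$. Combining the naturality identities at several values of $t$ with the assumed linear relation then forces $a=b=0$, a contradiction.

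The main obstacle will be the last step: precisely tracking the dependence of the $[\Omega]$-twisted, spin$^c$-decomposed cobordism map on the twisting parameter $t$, and verifying that the pairing with the spin$^c$ difference $\alpha$ genuinely distinguishes the two maps in $i$. A secondary technical issue is the lifting of $[\omega]$ from $Y$ to $W$; if the obstruction in $H_1(W,Y';\mathbb{R})$ does not vanish, a perturbation argument will be needed.
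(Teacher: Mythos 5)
Your overall framework matches the paper's: choose a class $[\Omega]\in H^2(W;\mathbb{R})$ restricting to $[\omega]$, set $[\eta]=[\Omega]|_{Y'}$, and feed the contact invariants through the twisted cobordism maps. However, you miss the crucial structural input that makes linear independence immediate, and your last step does not go through as written. The paper invokes Theorem~\ref{ghigginivhmorristhm} (due to Ghiggini and Van Horn-Morris), which asserts not only that $F_{\overline{W},\mathfrak{t}_i;[\Omega]}$ carries $c(\xi_i;[\eta])$ to $c(\xi;[\omega])$ (up to a unit) when $\mathfrak{t}_i$ is the canonical spin$^c$ structure of $J_i$, but also that $F_{\overline{W},\mathfrak{s};[\Omega]}(c(\xi_i;[\eta]))=0$ for \emph{every} $\mathfrak{s}\neq\mathfrak{t}_i$. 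Since $\mathfrak{t}_1\neq\mathfrak{t}_2$ by hypothesis, the map $F_{\overline{W},\mathfrak{t}_1;[\Omega]}$ sends $c(\xi_1;[\eta])$ to the nonzero class $c(\xi;[\omega])$ and annihilates $c(\xi_2;[\eta])$; similarly $F_{\overline{W},\mathfrak{t}_2;[\Omega]}$ annihilates $c(\xi_1;[\eta])$ and preserves $c(\xi_2;[\eta])$. Applying these two maps to a putative relation $a\,c(\xi_1;[\eta])+b\,c(\xi_2;[\eta])=0$ kills $a$ and $b$ in turn, uniformly and with no case split on whether $\mathfrak{t}_{\xi_1}$ and $\mathfrak{t}_{\xi_2}$ coincide on $Y'$. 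Your parametrized-twisting argument in the second case (replacing $[\Omega]$ by $[\Omega]+t\alpha$ and tracking weights) is not developed to the point of being convincing --- you acknowledge this yourself --- and in any event is superfluous once the vanishing half of the Ghiggini--Van Horn-Morris theorem is in hand.

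A secondary point: your concern about a lifting obstruction for $[\omega]$ is unfounded. A Stein cobordism is built from $Y\times[0,1]$ by attaching Weinstein handles of index at most two, so $H^3(W,Y;\mathbb{R})=0$ and the restriction map $H^2(W;\mathbb{R})\to H^2(Y;\mathbb{R})$ is surjective with no further argument; the detour through Poincar\'e--Lefschetz duality and the proposed perturbation are unnecessary.
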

%$\xi_1$ and $\xi_2$ are nonisotopic tight contact structures.

One of the uses of this twisted coefficient system is that it can detect tight contact structures that the untwisted contact invariant does not detect, namely weakly symplectically fillable contact structures that are not strongly symplectically fillable (e.g. see \cite{ghigginiosfillability}). In particular, the following result is due to Ozsv{\' a}th and Szab{\'o}. 

\begin{thm}[Theorem 4.2 of \cite{OSgenusbounds}] If $(X,\omega)$ is a weak symplectic filling of $(Y,\xi)$, then the contact invariant $c(\xi;[\omega]|_Y)$ is non-trivial. \label{thmosfilling}\end{thm}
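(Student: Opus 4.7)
The plan is to promote non-triviality of a closed symplectic $4$-manifold invariant to non-triviality of the twisted contact invariant, via Eliashberg's symplectic capping construction and the Ozsv{\'a}th--Szab{\'o} gluing formula with $\omega$-twisted coefficients.

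First, I would apply Eliashberg's symplectic capping theorem (as made explicit by Etnyre for weak fillings): after a small deformation of $\omega$ in a collar of $\partial X = Y$ that does not change the cohomology class $[\omega|_Y]$, the weak filling $(X,\omega)$ embeds symplectically in a closed symplectic $4$-manifold $(Z,\Omega)$ with $b_2^+(Z)\geq 2$, via a symplectic cap $N$ so that $Z = X\cup_Y N$ and $[\Omega|_Y] = [\omega|_Y]$. I would further arrange $N$ to arise from a Lefschetz fibration built from a Giroux-compatible open book for $\xi$, since this is precisely the description used to define the contact invariant.

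Next I would appeal to Taubes's theorem in its Ozsv{\'a}th--Szab{\'o} form: the mixed invariant $\Phi_{Z,\mathfrak{s}_\Omega;[\Omega]}$ of the canonical $\mathrm{spin}^c$ structure on $(Z,\Omega)$, computed with $\Omega$-twisted coefficients, is non-zero. The Ozsv{\'a}th--Szab{\'o} gluing formula then decomposes $\Phi_{Z,\mathfrak{s}_\Omega;[\Omega]}$ across $Y$ as a pairing between the relative invariant of $X$ in the twisted $HF^-(Y;[\omega|_Y])$ and the relative invariant of the cap $N$ in the twisted $HF^+(-Y;[\omega|_Y])$. Following the open-book/Lefschetz description of the contact invariant, the cap factor is identified, up to a unit, with the twisted contact element $c(\xi;[\omega|_Y])$; hence non-vanishing of the mixed invariant forces $c(\xi;[\omega|_Y])\neq 0$.

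The main obstacle is the bookkeeping of twisted coefficient systems across the decomposition: I must check that a single real $2$-cocycle representing $[\Omega]$ on $Z$ simultaneously restricts to cocycles representing $[\omega|_Y]$ on the two sides of $Y$, so that the gluing formula applies with matched twistings, and that the identification of the cap factor with $c(\xi;[\omega|_Y])$ respects this twisting. A secondary, largely standard, point is verifying that Eliashberg's capping deformation preserves $[\omega|_Y]$ on the nose; and if the initial cap yields $b_2^+(Z) = 1$, I would stabilize by symplectically summing with an auxiliary symplectic surface of large genus to place $(Z,\Omega)$ in the regime where the mixed invariant is defined and nontrivial.
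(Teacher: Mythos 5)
You should first note that the paper does not prove this statement at all: it is imported verbatim as Theorem 4.2 of \cite{OSgenusbounds}, so there is no internal proof to compare against; the relevant comparison is with Ozsv{\'a}th--Szab{\'o}'s original argument. Your outline is in fact essentially their strategy: cap the weak filling to a closed symplectic $4$-manifold with $b_2^+\geq 2$ (Eliashberg/Etnyre, with the cap built from a Giroux open book so that $[\Omega]$ restricts to $[\omega|_Y]$), invoke the non-vanishing of the closed $4$-manifold invariant in the canonical spin$^c$ structure, and cut the closed manifold open using the twisted composition law. The points you flag as needing care (matching the twisted coefficient systems on both sides of the cut, and arranging $b_2^+\geq 2$) are exactly the points handled in \cite{OSgenusbounds}, so your instincts about where the bookkeeping lives are right.

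The one place where your write-up has a genuine gap is the step ``the cap factor is identified, up to a unit, with the twisted contact element $c(\xi;[\omega|_Y])$.'' That identification is not something you can read off from the open-book description of $c(\xi)$; it is itself a nontrivial theorem, and proving it amounts to redoing the argument you are trying to give, so as stated the proposal is close to circular at this step. The way Ozsv{\'a}th--Szab{\'o} close the loop is to decompose the closed manifold not along $Y$ but along $Y_0$, the fibered $3$-manifold obtained by $0$-surgery on the binding of the open book (the cap contains this $2$-handle cobordism $W_1\colon Y\to Y_0$). In the canonical spin$^c$ structure the $\omega$-twisted $HF^+$ of the fibered manifold $Y_0$ has rank one, so the non-vanishing mixed invariant of the closed manifold, factored through this rank-one group by the composition law, forces the map induced by $\overline{W}_1$ to be non-zero on the generator; and by the very definition of the contact class, the image of that generator in $\underline{\widehat{HF}}(-Y,\mathfrak{t}_\xi;[\omega|_Y])$ \emph{is} $c(\xi;[\omega|_Y])$. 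If you replace your ``cap factor equals contact element'' step by this factorization through $Y_0$ (together with the twisted-coefficient version of the rank-one computation for fibered $3$-manifolds), your argument becomes the actual proof.
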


\noindent Coupling this result with Theorem \ref{cobordismthm}, we have the following corollary, which will be used in the proof of Theorem \ref{thm:fillablestrs}. This can be viewed as a generalization of Lisca-Matic's result in \cite{liscamatictightstrs}.

\begin{cor} If $(Y,\xi)$ is weakly symplectically fillable and $(W,J_i)$ is a Stein cobordism from $(Y,\xi)$ to $(Y',\xi_i)$ for $i=1,2$ such that the spin$^c$ structures induced by $J_1$ and $J_2$ are not isomorphic, then $\xi_1$ and $\xi_2$ are nonisotopic tight contact structures.\label{cobordismcor}\end{cor}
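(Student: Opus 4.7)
The plan is to combine Theorem \ref{thmosfilling} and Theorem \ref{cobordismthm} in the obvious way, using the fact that nontriviality of the contact invariant (twisted or untwisted) implies tightness.

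First I would use the weak filling hypothesis. Since $(Y,\xi)$ admits a weak symplectic filling $(X,\omega)$, Theorem \ref{thmosfilling} tells us that the twisted contact invariant $c(\xi,[\omega]|_Y)$ is nontrivial. This sets up the hypothesis of Theorem \ref{cobordismthm} with $[\omega]|_Y$ playing the role of the distinguished cohomology class on $Y$.

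Next I would feed this into Theorem \ref{cobordismthm}. Applying it to the two Stein cobordisms $(W,J_1)$ and $(W,J_2)$, whose induced spin$^c$ structures on $W$ are assumed non-isomorphic, yields a class $[\eta]\in H^2(Y';\mathbb{R})$ such that $c(\xi_1,[\eta])$ and $c(\xi_2,[\eta])$ are linearly independent in the appropriate twisted Heegaard Floer module. In particular, each of these twisted contact invariants is nonzero. Since nonvanishing of the contact invariant implies the underlying contact structure is tight (this is the analogue for twisted coefficients of the standard result, and presumably is either invoked earlier in the paper or cited from Ozsv{\'a}th--Szab{\'o}), we conclude that both $\xi_1$ and $\xi_2$ are tight.

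Finally, I would observe that linear independence forces nonisotopy. If $\xi_1$ were isotopic to $\xi_2$, then $c(\xi_1,[\eta])=c(\xi_2,[\eta])$ as elements of the twisted Floer invariant of $(Y',\xi_1)=(Y',\xi_2)$, and a single element cannot be linearly independent from itself. This contradicts the conclusion of Theorem \ref{cobordismthm}, so $\xi_1$ and $\xi_2$ must represent distinct isotopy classes of tight contact structures on $Y'$.

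There is really no obstacle here beyond correctly chaining the two cited theorems; the only thing worth being careful about is the naturality step invoked implicitly in the last paragraph, namely that an isotopy between $\xi_1$ and $\xi_2$ induces an equality (not merely a proportionality up to a unit) of the twisted contact invariants with respect to the same class $[\eta]$. This is a standard invariance property of $c(\xi;[\eta])$ under isotopy of the contact structure and should not require additional argument.
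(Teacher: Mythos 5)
Your proof is correct and matches the approach the paper implicitly takes: the corollary is stated as a direct consequence of combining Theorem \ref{thmosfilling} (weak fillability gives a nonvanishing twisted contact invariant on $Y$) with Theorem \ref{cobordismthm} (which then yields linearly independent twisted invariants on $Y'$), and linear independence simultaneously gives nontriviality (hence tightness) and nonisotopy. The paper does not spell out the argument at all, so your write-up is in fact more careful than the original, particularly in flagging the isotopy-invariance of the twisted invariant used in the final step.
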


\begin{remark} In \cite{wuonlegsurg}, Wu proves a similar result. However, its application requires either a strong understanding of the topology of the weak filling or that $c^+(\xi)\neq0$. Corollary \ref{cobordismcor}, on the other hand, only requires $(Y,\xi)$ to be weakly fillable. Moreover by \cite{ghigginihondavanhornmorris}, $c^+$ vanishes in the cases with which we will be concerned.\end{remark}

It is known that the boundaries of non-simply connected plumbings (whose associated graphs are not trees) admit infinitely many tight contact structures due to the presence of incompressible tori (see \cite{hondakazezmatic}). Given a particular universally tight contact structure on a 3-manifold containing an incompressible torus $T$, one can add \textit{Giroux torsion} in a neighborhood of $T$ to produce infinitely many universally tight contact structures. We say a contact manifold contains \textit{Giroux $n-$torsion}, for $n\ge1$, if there exists a contact embedding of $(T^2\times I,\xi_n=\ker(\sin(2n\pi z)dx+\cos(2n\pi z)dy))$ into $(Y,\xi)$. Moreover, given an isotopy class $[T]$ of torus, the \textit{torsion} tor($Y,\xi,[T]$) is defined to be the supremum, over $n\in\mathbb{Z}^+$, such that there exists a contact embedding $\phi: (T^2\times[0,1], \xi_n) \to (Y,\xi)$, where $\phi(T^2\times\{pt\})\in[T]$. 

By a result of Gay in \cite{gayfillabletightstrs}, tight contact structures that are strongly symplectically fillable have \textit{no Giroux torsion} (i.e. there does not exist such an embedding). Thus, if one wishes to restrict to Stein fillable or strongly fillable contact structures, they may start by restricting their attention to tight contact structures with no Giroux torsion. It is also worth noting that, by \cite{colingirouxhonda2}, there are at most finitely many tight contact structures with no Giroux torsion on a given 3-manifold.

The graphs associated to non-simply connected plumbings contain \textit{cycles} of spheres (see Figure \ref{cycles}). In such plumbing graphs, each edge of each cycle must be decorated with either $``+"$ or $``-"$ to specify the sign of the intersection of the (oriented) base spheres (undecorated edges are understood to have sign ``+"). The boundaries of \textit{cyclic} plumbings, as depicted in Figure \ref{generalcyclicplumbing}, are $T^2-$bundles over $S^1$. In \cite{hondatight2}, Honda classified tight contact structures on such manifolds, many of which are parametrized by the amount of \textit{$S^1-$twisting}. The precise definition of $S^1-$twisting can be found in Section 0.0.1 in \cite{hondatight2}. We will briefly recall this definition (and the definition of the related notion of $I-$twisting) in Section \ref{twist}. Combining this work of Honda \cite{hondatight2} with work of Golla-Lisca \cite{gollalisca} and Ding-Geiges \cite{dinggeiges}, we have the following theorem. 

\begin{thm}[\cite{hondatight2}, \cite{gollalisca}, \cite{dinggeiges}] Let $C_{\pm}$ be the boundary of the cyclic plumbing depicted in Figure \ref{generalcyclicplumbing}, where $a_i\ge2$ for all $i$ and $a_1\ge3$. Then, up to isotopy, the tight contact structures on $C_{\pm}$ are completely classified as below.
\begin{itemize}
\item $C_+$ admits exactly $(a_1-1)\cdot\cdot\cdot(a_n-1)$ tight contact structures with no Giroux torsion, all of which are Stein fillable. For each $l\in\mathbb{Z}^+$, $C_+$ admits a unique universally tight, weakly fillable contact structure with $S^1-$twisting $2l\pi$.
\item $C_-$ admits exactly $(a_1-1)\cdot\cdot\cdot(a_n-1)$ virtually overtwisted tight contact structures and a unique universally tight contact structure with no Giroux torsion. The virtually overtwisted contact structures are all Stein fillable and the universally tight contact structure is Stein fillable if $(a_1,...,a_n)$ is embeddable (as defined in \cite{gollalisca}). For each $l\in\mathbb{Z}^+$, $C_-$ admits a unique universally tight, weakly fillable contact structure with $S^1-$twisting $(2l-1)\pi$.

\end{itemize}
\label{thm:hondacyclicthm}\end{thm}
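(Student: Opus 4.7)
The plan is to first identify $C_\pm$ as a $T^2$-bundle over $S^1$ whose monodromy is determined by the plumbing data, and then invoke each of the three cited works to extract the enumeration, the universally tight/virtually overtwisted dichotomy, and the fillability statements. A standard continued-fraction calculation on the cyclic plumbing identifies the monodromy, up to conjugacy in $SL(2,\mathbb{Z})$, with a product of $n$ Dehn twists determined by $(a_1,\ldots,a_n)$; the sign decoration distinguishes $C_+$ from $C_-$ by an overall $-I$ factor. The hypotheses $a_i\ge 2$ and $a_1\ge 3$ place the monodromy in the hyperbolic conjugacy class covered by Honda's work and rule out the sporadic parabolic/elliptic cases.

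The main enumerative content comes from Honda's classification of tight contact structures on $T^2$-bundles over $S^1$ in \cite{hondatight2}. For the monodromies at hand, Honda produces a finite family of tight contact structures with minimal $S^1$-twisting together with an infinite family of universally tight structures, one per $l\in\mathbb{Z}^+$, with twisting $2l\pi$ on $C_+$ and $(2l-1)\pi$ on $C_-$; the parity difference reflects whether the identity lies in the Farey orbit of the monodromy. A bookkeeping computation along the continued-fraction Farey path identifies the size of the minimal-twisting family with the product $(a_1-1)\cdots(a_n-1)$. Honda's analysis further sorts these according to a sign on the relative Euler class: on $C_+$ every minimally twisted structure is universally tight, while on $C_-$ all but one distinguished structure are virtually overtwisted. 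The infinite-twisting families on both $C_+$ and $C_-$ are weakly symplectically fillable by attaching Giroux-torsion $T^2\times I$ layers to a weak filling of the base cyclic plumbing.

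It remains to establish Stein fillability of the minimal-twisting structures. For each of the $(a_1-1)\cdots(a_n-1)$ candidates in $C_+$, and for each of the $(a_1-1)\cdots(a_n-1)$ virtually overtwisted candidates in $C_-$, I would appeal to the Legendrian surgery diagrams of Ding-Geiges in \cite{dinggeiges} to exhibit an explicit Stein realization; pairwise non-isotopy is certified by Lisca-Matic (equivalently, by Corollary \ref{cobordismcor}) applied to the corresponding Stein fillings, matching the induced $\mathrm{spin}^c$ structures against Honda's enumeration. The subtlest step is the universally tight structure on $C_-$: here Golla-Lisca's embeddability criterion in \cite{gollalisca} gives a lattice-theoretic obstruction, and they show that this structure is Stein fillable precisely when the continued fraction $(a_1,\ldots,a_n)$ is embeddable in their sense. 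The main obstacle throughout is the dictionary between Honda's abstract parameters (relative Euler classes along a Farey path) on the one hand and the concrete product $(a_1-1)\cdots(a_n-1)$ together with the Golla-Lisca embeddability criterion on the other; once this dictionary is set up, the classification is an assembly of the three cited theorems.
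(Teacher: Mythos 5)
The paper does not actually prove Theorem~\ref{thm:hondacyclicthm}; it imports it verbatim as a citation, stating only that the result follows by ``combining this work of Honda \cite{hondatight2} with work of Golla-Lisca \cite{gollalisca} and Ding-Geiges \cite{dinggeiges}.'' So your proposal is a reconstruction of an argument the paper never supplies, and as such it is broadly on the right track: identify $C_\pm$ as a hyperbolic $T^2$-bundle over $S^1$ with monodromy $\pm B(a_1,\ldots,a_n)$, read off the finite minimal-twisting family and the one-per-$l$ torsion family from Honda's classification, and layer in the fillability statements.

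That said, two of your attributions do not match the roles the three references actually play in this paper, and one of your structural claims is off. First, Ding-Geiges \cite{dinggeiges} is invoked (see the proof of Proposition~\ref{prop3a}: ``In \cite{dinggeiges}, it is shown that $(C,\xi_l)$ is weakly symplectically fillable'') to supply \emph{weak} fillability of the infinite-torsion universally tight structures $\xi_l$, not Stein realizations of the minimal-twisting ones; the Stein fillability of the $(a_1-1)\cdots(a_n-1)$ minimal-twisting structures is established instead by stabilizing the obvious Legendrian handlebody diagram of the cyclic plumbing (exactly as in Propositions~\ref{prop3} and~\ref{prop4}) and distinguishing the results via Lisca-Matic. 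Second, your mechanism for weak fillability of the torsion family---``attaching Giroux-torsion $T^2\times I$ layers to a weak filling''---is not an argument that works in general; adding Giroux torsion typically destroys strong fillability and does not obviously preserve weak fillability, which is precisely why Ding-Geiges's explicit constructions are needed. Third, your claim that ``on $C_+$ every minimally twisted structure is universally tight'' is not what the theorem asserts and is inconsistent with the analogous $C_-$ case: the theorem only says these structures have no Giroux torsion and are Stein fillable; in fact the ones produced by stabilizing the plumbing diagram are generically virtually overtwisted (cf.\ the use of Theorem~3.1 of \cite{gollalisca} in the proof of Proposition~\ref{prop4} for $C_-$). The dichotomy you want is rather that $C_-$ has one \emph{extra} universally tight, torsion-free structure (with twisting $\pi$), whose Stein fillability is the part governed by Golla-Lisca's embeddability criterion.
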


Let $Y_{\pm}$ be the plumbed 3-manifold obtained as the boundary of the plumbing depicted in Figure \ref{generalsingularplumbing}. The main result of this paper is the following theorem. The notion of twisting mentioned in this theorem will be defined in Section \ref{sfs}.

\begin{figure}
\centering
\begin{subfigure}{.4\textwidth}
\centering
\includegraphics[scale=.6]{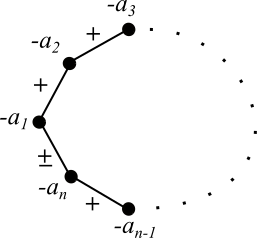}
\caption{A cyclic plumbing with boundary $C_{\pm}$}\label{generalcyclicplumbing}
\end{subfigure}
\begin{subfigure}{.4\textwidth}
\includegraphics[scale=.6]{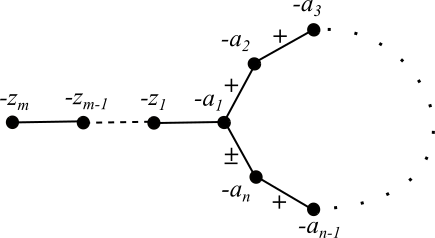}
\caption{A cyclic plumbing with an arm with boundary $Y_{\pm}$}\label{generalsingularplumbing}
\end{subfigure}
\caption{}\label{cycles}\end{figure}

\begin{thm} Let $a_i,z_j\ge2$ for all $i,j$ and $a_1\ge3$. Then, up to isotopy, the tight contact structures on $Y_{\pm}$ are completely classified as below.
\begin{itemize}
\item $Y_+$ admits exactly $(a_1-1)\cdot\cdot\cdot(a_n-1)(z_1-1)\cdot\cdot\cdot(z_m-1)$ tight contact structures with no Giroux torsion, all of which are Stein fillable. For each $l\in\mathbb{Z}^+$, $Y_+$ admits exactly $z_1(z_2-1)\cdot\cdot\cdot(z_m-1)$ weakly fillable contact structures with twisting $2l\pi$.
\item $Y_-$ admits exactly $(a_1-1)\cdot\cdot\cdot(a_n-1)(z_1-1)\cdot\cdot\cdot(z_m-1)+z_1(z_2-1)\cdot\cdot\cdot(z_m-1)$ tight contact structures with no Giroux torsion. $(a_1-1)\cdot\cdot\cdot(a_n-1)(z_1-1)\cdot\cdot\cdot(z_m-1)$ of them are Stein fillable and if $(a_1,...,a_n)$ is embeddable, then all of these contact structures are Stein fillable. For each $l\in\mathbb{Z}^+$, $Y_-$ admits exactly $z_1(z_2-1)\cdot\cdot\cdot(z_m-1)$ weakly fillable contact structures with twisting $(2l-1)\pi$.

\end{itemize}
 \label{thm:fillablestrs}\end{thm}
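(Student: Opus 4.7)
The plan is to establish matching upper and lower bounds on the number of tight contact structures of each type using the standard two-step strategy. I would begin by decomposing $Y_\pm$ along the essential torus $T$ that separates a neighborhood of the cycle from the arm, writing $Y_\pm = N\cup_T V$, where $N$ is the cycle-side piece (essentially the complement of a regular Seifert fiber inside $C_\pm$) and $V$ is a neighborhood of the arm, which is a linear plumbing determined by $(z_1,\ldots,z_m)$. Using Honda's classification of tight contact structures on $T^2\times I$ and $S^1\times D^2$ together with Theorem \ref{thm:hondacyclicthm}, I would enumerate tight contact structures on each piece for each prescribed dividing slope on $T$.

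For the upper bound on no-torsion contact structures, the dividing slope on $T$ induced by a tight contact structure on the $N$-side is constrained by the framing coming from the cycle. For each admissible slope, the number of tight contact structures on $V$ that extend is a product of $(z_i-1)$ factors arising from the continued-fraction decomposition, via the standard Honda--Kazez--Mati\'c state-traversal argument. Coupling with the $(a_1-1)\cdots(a_n-1)$ no-torsion tight contact structures on $C_\pm$ yields the upper bound $(a_1-1)\cdots(a_n-1)(z_1-1)\cdots(z_m-1)$. The additional summand $z_1(z_2-1)\cdots(z_m-1)$ for $Y_-$ comes from the universally tight contact structure on $C_-$, whose induced slope differs from that of the virtually overtwisted structures by a bypass, effectively replacing $z_1-1$ by $z_1$ in the arm-count. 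A parallel analysis, using the weakly fillable $C_\pm$-structures with $S^1$-twisting $2l\pi$ or $(2l-1)\pi$, produces the upper bound $z_1(z_2-1)\cdots(z_m-1)$ in the twisted case.

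For the lower bound on no-torsion structures, I would construct Legendrian realizations of the plumbing graph in $(\#_k(S^1\times S^2),\xi_{std})$, assigning each unknot a Thurston--Bennequin number equal to one more than the framing and ranging rotation numbers over all admissible stabilization patterns. Each such assignment yields a Stein filling of $Y_\pm$, and by Corollary \ref{cobordismcor} applied to the Stein cobordism from $S^3$, distinct rotation number assignments induce pairwise non-isotopic contact structures on the boundary. For the twisted case, I would start from Honda's universally tight, weakly fillable contact structure on $C_\pm$ with $S^1$-twisting $2l\pi$ or $(2l-1)\pi$ and extend across the arm via Stein cobordisms corresponding to each of the $z_1(z_2-1)\cdots(z_m-1)$ valid basic-slice sign patterns. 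The weak filling extends through symplectic handle attachment, and Corollary \ref{cobordismcor} combined with the non-triviality of $c(\xi;[\omega]|_Y)$ guaranteed by Theorem \ref{thmosfilling} distinguishes the resulting contact structures via the $\omega$-twisted Ozsv\'ath--Szab\'o invariant.

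The main obstacle I anticipate lies in the twisted case: verifying precisely that $z_1(z_2-1)\cdots(z_m-1)$ extensions give weakly fillable contact structures with the claimed twisting (neither more nor less) requires careful convex-surface-theoretic analysis from Section \ref{twist}, including edge-sign bookkeeping that justifies the $z_1$ versus $z_1-1$ discrepancy. Showing these extensions are pairwise non-isotopic reduces to verifying that the Stein cobordisms from $C_\pm$ to $Y_\pm$ induce pairwise non-isomorphic spin$^c$ structures, which should follow from comparing the rotation-number data of the Legendrian realization of the arm. The Stein fillability claims for the virtually overtwisted and embeddable universally tight structures on $Y_-$ will require invoking Golla--Lisca's embeddability criterion, with the arm contributing standard Stein handle attachments that do not affect the fillability of the cycle part.
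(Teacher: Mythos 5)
Your proposal follows the paper's two-step strategy (convex-surface upper bound, explicit Stein cobordism lower bound) and uses the same distinguishing tools (Corollary \ref{cobordismcor}, Theorem \ref{thmosfilling}, rotation numbers). But the upper bound sketch has real gaps, some of which you acknowledge without resolving.

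First, the parity of twisting. The paper's Proposition \ref{prop:verticallegendrian} and Corollary \ref{coretwisting} establish that a non-minimally-twisting tight contact structure on $Y_+$ must have twisting exactly $2l\pi$ and on $Y_-$ exactly $(2l-1)\pi$; this parity claim is the spine of the whole classification (it is what prevents, say, a twisting-$3\pi$ structure on $Y_+$). Proving it requires a careful sign analysis of basic slices together with edge rounding, and nothing in your sketch (``the dividing slope on $T$ induced by a tight contact structure on the $N$-side is constrained by the framing coming from the cycle'') gets you there. Second, the $z_1$ versus $z_1-1$ discrepancy in the twisted case: the naive decomposition gives an upper bound of $2(z_1-1)(z_2-1)\cdots(z_m-1)$ for each $l$ (two choices of sign for the outermost basic slice times the solid-torus count), and the paper must then show $2(z_1-2)(z_2-1)\cdots(z_m-1)$ of these pair off isotopically via a shuffling argument on continued-fraction blocks, leaving $z_1(z_2-1)\cdots(z_m-1)$. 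You flag this as an obstacle but leave it unresolved, so your twisted-case upper bound is not actually established. Third, for $Y_-$ you need to show that the $(a_1-1)\cdots(a_n-1)(z_1-1)\cdots(z_m-1)$ Stein fillable structures you construct are in fact minimally twisting (and not twisting $\pi$); the paper does this by observing they arise from Legendrian surgery on virtually overtwisted structures on $C_-$, which by Honda are minimally twisting. Without this observation your count in the twisting-$\pi$ bin would be wrong.

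There is also a structural issue with your decomposition $Y_\pm=N\cup_T V$. The piece $N$ (the complement of a regular fiber in $C_\pm$) is not a Honda building block, so borrowing the closed count $(a_1-1)\cdots(a_n-1)$ for $N$ with prescribed boundary slope is not automatic. The paper instead keeps all three tori $T_0$, $T_1$, $T_2$ in play, reduces to $S^1\times\Sigma$, $T^2\times I$, and $S^1\times D^2$, and uses Honda's pair-of-pants classification (Lemma 5.1-4c of \cite{hondatight2}) to close the count; that pair-of-pants input is missing from your sketch. Your lower bound plan is essentially correct and matches the paper's (Kirby-diagram stabilizations for the minimally twisting case, Stein cobordisms from $(C_\pm,\xi_l)$ distinguished via rotation numbers and the twisted contact invariant for the others).
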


\begin{remark} The proof of Theorem \ref{thm:fillablestrs} can be modified to classify the tight contact structures for $Y_{\pm}$ in more general settings. That is, one can remove the assumption $a_1\ge 3$ in certain cases and prove analogous results. \end{remark}

The proof of Theorem \ref{thm:fillablestrs} for the Stein fillable contact structures is fairly standard. It relies on convex surface theory to provide an upper bound for the number of tight contact structures and then by producing explicit Stein fillings with distinct first Chern classes, we realize this upper bound. In the non-Stein fillable cases (e.g. when Giroux torsion is present), we will appeal to Theorem \ref{cobordismthm}, which will be proved in Section \ref{invt}, and to the discussion in Section \ref{steincobordism}, which presents a method (analogous to Proposition 2.3 in \cite{gompfsteinsurfaces}) of distinguishing almost complex structures on Stein cobordisms using rotation numbers. Section \ref{cst} contains an overview of relevant convex surface theory results of Giroux and Honda and the proof of Theorem \ref{thm:fillablestrs} is found in Section \ref{fillable}. Finally, in Section \ref{Y_-filling}, we provide explicit descriptions of the Stein fillings of the tight contact structures on $Y_-$ satisfying the condition: ``$(a_1,...,a_n)$ is embeddable."\\

\noindent\textbf{Acknowledgements} The author would like to thank his advisor, Tom Mark, for his encouragement, patience, and guidance, and Bulent Tosun for his helpful input and enthusiasm.

\section{The Ozsv{\' a}th-Szab{\'o} contact invariant with $\omega$-twisted coefficients}\label{invt}

In this section, we will recall the definition of the contact invariant with $\omega$-twisted coefficients, as defined in \cite{OSgenusbounds}, and use it to prove Theorem \ref{cobordismthm} below, which will in turn be used in the proof of Theorem \ref{thm:fillablestrs}. We will assume the reader is familiar with Heegaard Floer homology with twisted coefficients and the contact invariant (see \cite{heegfloer2}, \cite{heegfloerMaps}).

Let $Y$ be a three-manifold and fix a cohomology class $[\omega]\in H^2(Y;\mathbb{R})$. We can then view $\mathbb{Z}[\mathbb{R}]$ as a $\mathbb{Z}[H^1(Y;\mathbb{Z})]$-module via the ring homomorphism $[\gamma]\to T^{\langle \gamma\cup\omega,[Y]\rangle}$, where $T^r$ denotes the group ring element associated to $r\in\mathbb{R}$. Using this coefficient system, we denote the \textit{$\omega$-twisted Floer homology} by $\widehat{\underline{HF}}(Y;[\omega])$. Let $W:Y\to Y'$ be a cobordism and let $[\omega]\in H^2(W;\mathbb{R})$. Then for each $\mathfrak{s}\in$Spin$^c(W)$, we obtain an induced map $\underline{F}_{W,\mathfrak{s};[\omega]}:\widehat{\underline{HF}}(Y,\mathfrak{s}|_{Y};[\omega]|_{Y})\to\widehat{\underline{HF}}(Y',\mathfrak{s}|_{Y'};[\omega]|_{Y'})$, which is well-defined up to multiplication by $\pm T^c$ for some $c\in \mathbb{R}$. See \cite{OSgenusbounds} for more details.

Given a contact structure $\xi$ on $Y$, we can define the \textit{$\omega$-twisted contact invariant} $c(Y;[\omega])\in\underline{\widehat{HF}}(-Y,\mathfrak{t_{\xi}};[\omega])$, where $\mathfrak{t_{\xi}}$ denotes the canonical spin$^c$ structure on $Y$ determined by $\xi$. This element is well-defined up to sign and multiplication by invertible elements in $\mathbb{Z}[H^1(Y;\mathbb{Z})]$. We denote its equivalence class by $[c(\xi;[\omega])]$.

The following theorem follows by the proof of Theorem 3.6 in \cite{ghigginivhmorris}. We will use it to prove the main result of this section, Theorem \ref{cobordismthm} below, which can be thought of as a generalization of a result due to Plamenevskaya in \cite{plamenevskaya}.

\begin{thm} Let $(Y,\xi)$ and $(Y',\xi')$ be contact manifolds and let $(W,J)$ be a Stein cobordism from $(Y,\xi)$ to $(Y',\xi')$ which is obtained by Legendrian surgery on some Legendrian link in $Y$. If $\mathfrak{t}$ is the canonical spin$^c$ structure on $W$ for the complex structure $J$, then: $$[F_{\overline{W},\mathfrak{s};[\omega]}(c(\xi';[\omega]|_{Y'}))]=  \left\{
\begin{array}{ll}
      [c(\xi;[\omega]|_{Y})] \text{  if  } \mathfrak{s}=\mathfrak{t} \\
      0 \text{  if  } \mathfrak{s}\neq\mathfrak{t}
\end{array} 
\right.$$
\label{ghigginivhmorristhm}\end{thm}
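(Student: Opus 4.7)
The plan is to extend the standard Legendrian-surgery naturality of the contact invariant (proved in the untwisted case by Ozsv\'ath-Szab\'o and Plamenevskaya) to the $\omega$-twisted setting, mirroring the strategy of Theorem 3.6 in \cite{ghigginivhmorris}. Both sides of the claimed identity are functorial under composition of Stein cobordisms (up to multiplication by $\pm T^c$), so it suffices to treat a single Legendrian 2-handle attached along a Legendrian knot $L\subset(Y,\xi)$ with framing $\text{tb}(L)-1$.

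For this elementary cobordism I would choose an open book decomposition of $(Y,\xi)$ compatible with $\xi$ and, after sufficient positive stabilization, containing $L$ as a Legendrian curve on a page that realizes its contact framing. From this data one constructs an adapted Heegaard triple $(\Sigma,\alpha,\beta,\gamma,z)$ in which $(\Sigma,\alpha,\beta,z)$ represents $-Y'$ and $(\Sigma,\alpha,\gamma,z)$ represents $-Y$, with $\gamma_i=\beta_i$ for $i\neq 1$ and $\gamma_1$ obtained from $\beta_1$ by a handleslide across the attaching curve of the 2-handle. The distinguished Honda-Kazez-Mati\'c intersection points $x_{\xi'}$ and $x_\xi$ on the two sides represent the contact classes at the chain level.

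In the untwisted setting there is a single small holomorphic triangle $\psi_0$ connecting $x_{\xi'}$ to $x_\xi$; it lies in the canonical spin$^c$ structure $\mathfrak{t}$ induced by $J$, and a planar area argument rules out any other holomorphic triangle from $x_{\xi'}$ to $x_\xi$ in any spin$^c$ structure. This gives the untwisted equalities $F_{\overline{W},\mathfrak{t}}(c(\xi'))=c(\xi)$ and $F_{\overline{W},\mathfrak{s}}(c(\xi'))=0$ for $\mathfrak{s}\neq\mathfrak{t}$. Passing to the $\omega$-twisted cobordism map, the same triangle $\psi_0$ contributes with invertible weight $T^{\langle[\omega],[\psi_0]\rangle}\in\mathbb{Z}[\mathbb{R}]$, which is absorbed into the ambiguity of the equivalence class $[c(\xi;[\omega]|_Y)]$ and yields the $\mathfrak{s}=\mathfrak{t}$ assertion. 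A zero coefficient remains zero under the twist, so the vanishing for $\mathfrak{s}\neq\mathfrak{t}$ is unaffected.

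The main obstacle is verifying that the weight assignment $\psi\mapsto T^{\langle[\omega],[\psi]\rangle}$ is genuinely compatible with the $\mathbb{Z}[H^1(-Y;\mathbb{Z})]$- and $\mathbb{Z}[H^1(-Y';\mathbb{Z})]$-module structures on the twisted chain complexes, and that the contact classes $c(\xi;[\omega]|_Y)$ and $c(\xi';[\omega]|_{Y'})$ are represented by the same intersection points as in the untwisted theory. Both statements are essentially built into the definition of the twisted cobordism map in \cite{OSgenusbounds}; once one confirms them for the specific open book and Heegaard triple chosen above, the untwisted argument carries over with only bookkeeping changes, which is the content of the cited proof in \cite{ghigginivhmorris}.
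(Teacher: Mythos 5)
Your proposal reconstructs, in a more detailed sketch, precisely the argument the paper invokes: the paper offers no independent proof and simply states that the theorem ``follows by the proof of Theorem 3.6 in \cite{ghigginivhmorris}.'' Your outline (reduce to a single Legendrian 2-handle, use the Honda--Kazez--Mati\'c open-book model and its unique small holomorphic triangle, observe that twisting only introduces an invertible weight absorbed by the equivalence-class ambiguity) is faithful to that reference, so you are taking essentially the same approach as the paper.
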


\noindent\textbf{Theorem \ref{cobordismthm}.} \textit{Suppose $(Y,\xi)$ is a contact manifold and $[\omega]\in H^2(Y;\mathbb{R})$ is an element such that $c(\xi,[\omega])$ is nontrivial. Let $(W,J_i)$ be a Stein cobordism from $(Y,\xi)$ to $(Y',\xi_i)$ for $i=1,2$. If the spin$^c$ structures induced by $J_1$ and $J_2$ are not isomorphic, then there exists an element $[\eta]\in H^2(Y';\mathbb{R})$ such that $c(\xi_1,[\eta])$ and $c(\xi_2,[\eta])$ are linearly independent.}

\begin{proof} 
Since $W$ has no 3-handles, $H^3(W,Y)=0$ and so there exists an element $[\Omega]\in H^2(W;\mathbb{R})$ satisfying $[\Omega]|_Y=[\omega]$. Let $\mathfrak{s}_1,\mathfrak{s}_2\in \text{Spin}^c(W)$ such that $\mathfrak{s}_i|_{Y}=\mathfrak{t}_{\xi}$ and $\mathfrak{s}_i|_{Y'}=\mathfrak{t}_{\xi_i}$ for $i=1,2$. Consider the cobordism maps $F_{\overline{W},\mathfrak{s}_i;[\Omega]}:\underline{\widehat{HF}}(-Y',\mathfrak{t}_{\xi_i};[\Omega]|_{Y'})\to\underline{\widehat{HF}}(-Y,\mathfrak{t}_{\xi};[\Omega]|_{Y})$. By Theorem \ref{ghigginivhmorristhm}, $[F_{\overline{W},\mathfrak{s}_i;[\Omega]}(c(\xi_i;[\Omega]|_{Y'}))]=[c(\xi;[\Omega]|_{Y})]$ if $\mathfrak{s_i}=\mathfrak{t_i}$, where  $\mathfrak{t_i}$ is the canonical spin$^c$ structure associated to $J_i$. Thus $[c(\xi_1;[\Omega]|_{Y'})]$ and $[c(\xi_2;[\Omega]|_{Y'})]$ are both nontrivial. Moreover, $[F_{\overline{W},\mathfrak{s}_i;[\Omega]}(c(\xi_i;[\Omega]|_{Y'}))]=0$ whenever $\mathfrak{s}_i\neq\mathfrak{t}_i$. In particular, $[F_{\overline{W},\mathfrak{t}_i;[\Omega]}(c(\xi_j;[\Omega]|_{Y'}))]=0$ when $i\neq j$. Thus, since $\mathfrak{t}_1\neq\mathfrak{t}_2$, we have that $[c(\xi_1;[\Omega]|_{Y'})]\neq [c(\xi_2;[\Omega]|_{Y'})]$. Moreover, the contact elements $c(\xi_1;[\Omega]|_{Y'})$ and $c(\xi_2;[\Omega]|_{Y'})$ live in different summands of $\underline{\widehat{HF}}(-Y;[\Omega]|_{Y})$ and are thus linearly independent.\end{proof}

\section{Legendrian surgery in $T^2\times I$}\label{steincobordism}

In this section, we will describe a method to distinguish contact structures obtained by Legendrian surgery on 3-manifolds containing a particular contact $T^2\times[0,1]$. This method will be used in the proof of Theorem \ref{thm:fillablestrs} found in Section \ref{fillable}. Give $T^2\times [0,1]$ the coordinates $((x,y),t)$ and define the contact structure $\xi$ on $T^2\times[0,1]$ to be the kernel of the 1-form $\alpha=\sin(\phi(t))dx+\cos(\phi(t))dy$, where $\phi'(t)>0$, $\phi(0)=-\frac{\pi}{2}$, and $\phi(1)=\frac{\pi}{2}$. Then there exists a torus $T_{t_0}=T^2\times\{t_0\}$, such that $\phi(t_0)=0$ so that the contact form restricted to $T_{t_0}$ is $\alpha=dy$.

Consider the standard diagram of $T^2\times[0,1]$ as embedded in $\mathbb{R}^3$ depicted in Figure \ref{surgerycurve3} (without the red surgery curves), where $T_1=T^2\times\{1\}$ is the outer torus and $T_0=T^2\times\{0\}$ is the inner torus. Moreover, let $S^1\times \{pt\}\times\{pt\}$ be the longitudinal direction and $\{pt\}\times S^1\times\{pt\}$ be the meridional direction in this diagram. Then we can draw $T_{t_0}$ as a square, with its edges identified, such that the horizontal edges of the square are the $x$-direction and the vertical edges of the square are the $y$-direction. Let $\gamma=S^1\times\{pt\}\times\{t_0\}$ and define the 0-framing associated to $\gamma$ to be surface framing of $\gamma$ in the surface $T_{t_0}$. Denote this framing by $\mathcal{F}$. Then any knot smoothly isotopic to $\gamma$ has a well-defined 0-framing, namely the image of $\mathcal{F}$ under the isotopy. For any nullhomologous knot in $T^2\times [0,1]$, the 0-framing is given by the Seifert surface framing.

\begin{figure}[h]
\centering
\begin{subfigure}{.45\textwidth}
\centering
\includegraphics[scale=.55]{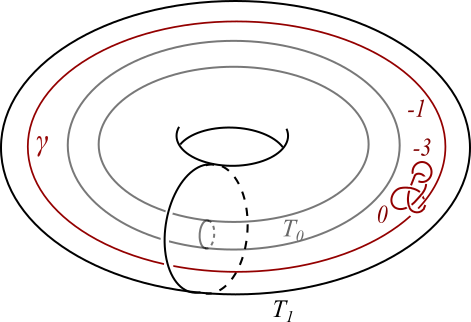}
\caption{A surgery link $L$ in $T^2\times[0,1]$}\label{surgerycurve3}
\end{subfigure}
\begin{subfigure}{.45\textwidth}
\centering
\includegraphics[scale=.55]{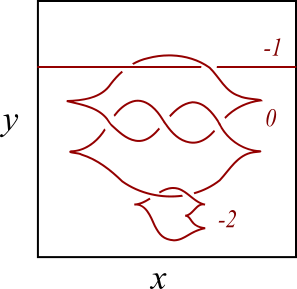}
\caption{The front projection of $L$. The framings are smooth framings.}\label{frontprojection3}
\end{subfigure}
\caption{}
\end{figure}

As in the case of Legendrian knots in $(\mathbb{R}^3,\xi_{st})$, we can project any Legendrian curve $L\subset(T^2\times(0,1),\xi)$ to $T_{t_0}$. We call this the \textit{front projection} of $L$. If $L\subset T^2\times(0,1)$, then the projection will have no vertical tangencies, since $\frac{dy}{dx}=-\tan(\phi(t))\neq\infty$ for all $t\in(0,1)$. It will, however, contain semi-cubical cusps and away from these cusp points $L$ can be recovered by $\frac{dy}{dx}=-\tan(\phi(t))$. In particular, at a crossing the strand with smaller slope is in front. For example, Figure \ref{frontprojection3} shows a front projection of the link depicted in Figure \ref{surgerycurve3}. We will only concern ourselves with nullhomologous knots that can be contained in a 3-ball and knots that are smoothly isotopic to $\gamma$.

Give $\mathbb{R}^3$ the coordinates $(u,v,w)$ so that $\xi_{st}=\text{ker}(dw+udv)$ and let $\tilde{\xi}_{st}$ be the image of $\xi_{st}$ under the projection $\mathbb{R}^3\to \mathbb{R}\times(\mathbb{R}^2/\mathbb{Z}^2)\cong (0,1)\times T^2$. It is easy to see that $(T^2\times (0,1),\xi)$ is isotopic to $(T^2\times (0,1),\tilde{\xi}_{st})$. In particular, the contact planes of $\tilde{\xi}_{st}$ and $\xi$ twist in similar fashions. Thus, for a front projection $K$ of a nullhomologous Legendrian knot that can be contained in a 3-ball, the Thurston-Bennequin number $tb(K)$ and the rotation number $r(K)$ can be defined and computed in the same way for Legendrian knots in $(\mathbb{R}^3,\xi_{st})$. That is, $tb(K)=w(K)-\frac{1}{2}c(K)$ and $r(K)=\frac{1}{2}(c_d(K)-c_u(K))$, where $w(K)$ is the writhe of $K$, $c(K)$ is the total number of cusps of $K$, $c_d(K)$ is the number of ``down" cusps of $K$, and $c_u(K)$ is the number of ``up" cusps of $K$. Now let $\tilde{\gamma}$ be a Legendrian knot that is smoothly isotopic to $\gamma$ (and is thus not nullhomologous). As above, the twisting number along $\tilde{\gamma}$ with respect to $\mathcal{F}$, which we denote by $tb(\tilde{\gamma},\mathcal{F})$, can also be computed using the formula $tb(\tilde{\gamma},\mathcal{F})=w(\tilde{\gamma})-\frac{1}{2}c(\tilde{\gamma})$. For simplicity, we will drop the decoration $\mathcal{F}$ and simply write $tb(\tilde{\gamma})$. Next, since $\frac{\partial}{\partial t}\in\xi$ is a nonvanishing vector field, we can define the rotation number of $\tilde{\gamma}$ with respect to $\frac{\partial}{\partial t}$, denoted by $r_{\partial/\partial t}(\tilde{\gamma})$, to be the signed number of times that the tangent vector field to $\tilde{\gamma}$ rotates in $\xi$ relative to $\frac{\partial}{\partial t}$ as we traverse $\tilde{\gamma}$. For simplicity, we will write $r(\tilde{\gamma})=r_{\partial/\partial t}(\tilde{\gamma})$. It is once again easy to see that we can compute $r(\tilde{\gamma})$ using the formula $r(\tilde{\gamma})=\frac{1}{2}(c_d(\tilde{\gamma})-c_u(\tilde{\gamma}))$. In particular, for the Legendrian knot $\gamma$, we have that $tb(\gamma)=0=r(\gamma)$.

Now suppose $T^2\times[0,1]$ is embedded in a closed tight contact 3-manifold $(Y,\xi)$ such that $c_1(\xi)=0$ and $\xi|_{T^2\times[0,1]}$ is isotopic to the contact structure above. Further suppose $(W,J)$ is a Stein cobordism from $(Y,\xi)$ to $(Y',\xi')$ obtained by attaching 2-handles $\{h_i\}_{i=1}^n$ along Legendrian knots $\{K_i\}_{i=1}^n$, where each $K_i$ is either contained in a 3-ball or is smoothly isotopic to $\gamma$ in $T^2\times(0,1)$. Moverover, suppose these knots have respective smooth framings $\{tb(K_i)-1\}_{i=1}^n$. Assume we can extend $\frac{\partial}{\partial t}$ to a nonvanishing vector field $v\in\xi$ (which trivializes $\xi$ as a 2-plane bundle). Let $w\in TW|_{Y}$ be an outward normal vector field to $Y$. Then the frame $(v, Jv, Jw)$ gives a trivialization $\tau$ of $TY$. Following the arguments of Proposition 2.3 in \cite{gompfsteinsurfaces}, we prove the following.

\begin{lem} $c_1(W,J,\tau)$ can be represented by a cocycle whose value on $h_i$ is equal to $r(K_i)$.\label{chernlemma}\end{lem}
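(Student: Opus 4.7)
The plan is to adapt the strategy of Gompf's Proposition 2.3 in \cite{gompfsteinsurfaces} to the present relative setting. First I interpret $c_1(W,J,\tau)\in H^2(W,Y;\mathbb{Z})$ as the primary obstruction to extending the complex trivialization of $\det_{\mathbb{C}}TW|_Y$ determined by $\tau$ to a trivialization of $\det_{\mathbb{C}}TW$ over all of $W$. Viewing $J$ as multiplication by $i$, the real frame $(v,Jv,Jw,w)$ underlies the complex frame $(v,Jw)$, so $v\wedge_{\mathbb{C}}Jw$ trivializes $\det_{\mathbb{C}}TW|_Y$. Since $W$ is built from $Y\times I$ by attaching only the 2-handles $\{h_i\}$, this trivialization extends over the collar and the obstruction cocycle is supported on the $h_i$.

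Because each $h_i\cong D^2\times D^2$ is contractible, $\det_{\mathbb{C}}TW|_{h_i}$ is trivial; after fixing any local trivialization $\sigma_i$, the cocycle value on $h_i$ equals the winding number around $K_i$ of the transition function $\sigma_i/(v\wedge_{\mathbb{C}}Jw)$. To obtain a useful $\sigma_i$ I would use the standard Weinstein 2-handle model, in which the core $D_i$ is totally real (in fact Lagrangian), so that any real tangent frame of $TD_i$ is automatically a complex frame of $TW|_{D_i}$. Take the frame $(T,T')$ with $T$ the unit tangent to $K_i=\partial D_i$ and $T'$ the inward-pointing tangent to $D_i$, and set $\sigma_i=T\wedge_{\mathbb{C}}T'$. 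In this model, $T'$ coincides with the inward Liouville direction $-w$ along $K_i$.

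I would then perform the comparison on $K_i$. Since $K_i$ is Legendrian, $T\in\xi=\operatorname{span}_{\mathbb{R}}(v,Jv)=\operatorname{span}_{\mathbb{C}}(v)$, and by the very definition of the rotation number as the winding of $T$ relative to $v$ inside $\xi$ (applicable in both the nullhomologous and the smoothly-$\gamma$-isotopic cases), we may write $T=e^{i\theta(t)}v$ with $\oint d\theta=2\pi\,r(K_i)$. Using $J^2=-1$ we have $-w=i\cdot Jw$ in complex notation, so $T'=i\cdot Jw$. The change-of-basis matrix from $(v,Jw)$ to $(T,T')$ is therefore the diagonal matrix $\operatorname{diag}(e^{i\theta},i)$, with determinant $ie^{i\theta}$, whose winding number around $K_i$ equals $r(K_i)$. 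This is the claimed cocycle value.

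The main obstacle I anticipate is justifying the identification $T'=-w$ along $K_i$. A priori, the inward tangent to a totally real core disk could have nonzero components along the Reeb direction $Jw$ or inside $\xi$, in which case the change-of-basis matrix would not be diagonal and a little extra bookkeeping would be required. Two complementary arguments bypass this. First, in the explicit Weinstein 2-handle model used for Stein handle attachment one arranges the core so that $T'$ is exactly the outward Liouville vector field $-w$ along the attaching circle, making the identification tautological. Second, and more intrinsically, the cocycle depends only on the homotopy class of the trivialization, and any two vectors along $K_i$ transverse to $Y$ and $\mathbb{C}$-independent from $T$ are homotopic through such vectors, so one can always deform $T'$ to $-w$ without changing the winding. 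Either route yields $c_1(W,J,\tau)([h_i])=r(K_i)$, completing the lemma.
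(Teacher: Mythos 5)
Your proposal is correct and follows essentially the same strategy as the paper, which also adapts Gompf's Proposition 2.3: both interpret $c_1(W,J,\tau)$ as the obstruction to extending the boundary trivialization over the $2$-handles and compute the winding along each $K_i$. The only difference is bookkeeping — you pass to the determinant line bundle $\det_{\mathbb{C}}TW$ and read off the winding of the change-of-basis determinant, while the paper splits $T((Y\times I)\cup W)=L_1\oplus L_2$ with $L_2$ trivial and computes $c_1(L_1)$ directly via the extended frame $(a^*,b^*)$ from the totally real core; the paper's splitting sidesteps the identification $T'=-w$ that you (correctly) flag and then resolve via the Weinstein model or the contractibility-of-the-fiber homotopy argument.
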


\begin{proof} By \cite{eliashbergcomplexif}, we can thicken $Y$ to a Stein cobordism $Y\times[0,1]$ from $(Y,\xi)$ to itself. We can extend $\tau$ of $TY$ to a complex trivialization of $T(Y\times[0,1])$ using the inward pointing normal vector field $-\frac{\partial}{\partial s}$ (which agrees with $w$ on $Y$), where $s$ is the coordinate on $[0,1]$. To form $W$, we attach the 2-handles $h_i$ to $Y\times\{1\}$. By definition, $c_1(W,J,\tau)$ measures the failure to extend the trivialization of $T(Y\times[0,1])$ over $h_i$ for all $i$. For each $i$, viewing $h_i\cong D^2\times D^2\subset i\mathbb{R}^2\times\mathbb{R}^2$, we can build a complex trivialization of $Th_i$. First trivialize $T(D^2\times0)|_{\partial D^2}$ by using the tangent vector field $a$ to $\partial D^2$ and the outward normal vector field $b$. We can then extend this trivialization to a complex trivialization $(a^*,b^*)$ of $Th_i$ (see \cite{gompfsteinsurfaces} for details). Now, when we attach $h_i$ to $Y$, $a$ is identified with a tangent vector field to $K_i$ and $b$ is identified with $-\frac{\partial}{\partial s}|_{K_i}$. Thus $a^*$ and $v$ both span $\xi$ when restricted to $TY$ and thus together they span a complex line bundle $L_1$ on $(Y\times I)\cup W$. Moreover, $b^*$ and $-\frac{\partial}{\partial s}$ fit together to span a complementary trivial line bundle $L_2$. Since $T((Y\times I)\cup W)=L_1\oplus L_2$, the cochain associated to $c_1(W,J,\tau)$ evaluated on $h_i$ is clearly given by the rotation number of $a$ in $\xi$ relative to $\frac{\partial}{\partial t}$.\end{proof}

We will use Lemma \ref{chernlemma} in the following context. Suppose 2-handles are attached along a link $L=K_1\sqcup\cdot\cdot\cdot\sqcup K_n\subset T^2\times (0,1)\subset Y$ with respective framings $-a_i$ to obtain $Y'$, where each $K_i$ is either contained in a 3-ball or is smoothly isotopic to $\gamma$. Further supposed that there exists a front projection $L'$ of $L$ such that $tb(K'_i)\ge -a_i+1$ for all $i$. Then for each $i$, we can stabilize $K'_i$ $(tb(K'_i)+a_i-1)$-times to obtain a Legendrian knot satisfying $tb(K'_i)=-a_i+1$. There are two kinds of stabilizations (i.e with an upward cusp or a downward cusp), which affect the rotation numbers differently. Thus, for each $i$, there are $tb(K'_i)+a_i-1$ different stabilizations possible for $K'_i$. As a quick example, notice that the link in Figure \ref{frontprojection3} has two stabilization possibilities. Now by Lemma \ref{chernlemma}, these different kinds of stabilizations yield distinct Stein cobordisms. Moreover, if the hypothesis of Theorem \ref{cobordismthm} is satisfied, then the induced contact structures on $Y'$ are nonisotopic.

\section{Results from convex surface theory}\label{cst}

We will assume that the reader is familiar with convex surface theory due to Giroux \cite{girouxconvex} and we will list some key results about bypass attachments due to Honda \cite{hondatight1} which will be used throughout the rest of the paper. For a nice exposition on the basics of convex surface theory, see \cite{ghigginischonenberger}. First recall that, by Giroux \cite{girouxconvex}, any embedded orientable surface $\Sigma$ (that is either closed or has Legendrian boundary with nonpositive twisting number) in a contact 3-manifold can be perturbed to be \textit{convex}. This is equivalent to the existence of a collection of curves $\Gamma_{\Sigma}\subset\Sigma$ called the \textit{dividing set} that satisfies certain properties (see \cite{ghigginischonenberger}). If $T^2$ is a convex torus, then by Giroux's criterion (Theorem 3.1 in \cite{hondatight1}), $\Gamma_{T^2}$ consists of (an even number of) parallel dividing curves. Identifying $T^2$ with $\mathbb{R}^2/\mathbb{Z}^2$, the slope $s$ of the dividing curves is called the \textit{boundary slope} and denoted by $s(\Gamma_{T^2})$. By Giroux's Flexibility Theorem in \cite{girouxconvex}, $T^2$ can be further perturbed (relative to $\Gamma_{T^2}$) so that the characteristic foliation consists of a 1-parameter family of closed curves called \textit{Legendrian rulings}. Each of these curves has the same slope $r$, called the \textit{ruling slope}. In this case, each component of $T^2\backslash\Gamma_{T^2}$ contains a line of singular points of slope $s$ called a \textit{Legendrian divide}. A convex torus that is in this form is said to be in \textit{standard form}.

\begin{thm}[Flexibility of Legendrian rulings \cite{hondatight1}] Assume $T^2$ is a convex torus in standard form, and, using $\mathbb{R}^2/\mathbb{Z}^2$ coordinates, has boundary slope $s$ and ruling slope $r$. Then by a $C^0$-small perturbation near the Legendrian divides, we can modify the ruling slope from $r\neq s$ to any other $r'\neq s$ (including $\infty$).\label{thm:flexibility}\end{thm}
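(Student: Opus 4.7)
The plan is to reduce the statement to Giroux's Realization Lemma, a consequence of his flexibility theorem for convex surfaces: if $\Sigma$ is a convex surface with dividing set $\Gamma$, then any singular foliation on $\Sigma$ divided by $\Gamma$ is realized as the characteristic foliation of some $C^0$-small isotopic perturbation of $\Sigma$ through convex surfaces sharing the dividing set $\Gamma$.

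The first step is to write down a target model foliation $\mathcal{F}'$ on $T^2$ in standard form with boundary slope $s$, ruling slope $r'$, and the same Legendrian divides as the original characteristic foliation $\mathcal{F}$---namely, one Legendrian divide of slope $s$ in each annular component of $T^2\setminus\Gamma_{T^2}$, with closed Legendrian leaves of slope $r'$ filling the complement. I would then verify that $\Gamma_{T^2}$ still divides $\mathcal{F}'$: since $r'\neq s$, the rulings of $\mathcal{F}'$ are transverse to $\Gamma_{T^2}$, and because the singular behavior at the Legendrian divides is inherited unchanged from $\mathcal{F}$, a vector field directing $\mathcal{F}'$ (slope $r'$ away from the divides, spiraling in with the same signs as before) together with the area form that witnessed the dividing property of $\mathcal{F}$ exhibits $\Gamma_{T^2}$ as a dividing set for $\mathcal{F}'$.

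Applying the Realization Lemma to $\mathcal{F}$ and $\mathcal{F}'$ then produces a $C^0$-small isotopy of $T^2$ through convex tori with constant dividing set $\Gamma_{T^2}$ ending at a torus whose characteristic foliation is $\mathcal{F}'$; this torus is therefore in standard form with boundary slope $s$ and ruling slope $r'$. To realize the localization-near-the-divides claim, I would use an explicit local model for the contact structure in a collar of each Legendrian divide, of the form $\ker(\sin(\phi(t))dx+\cos(\phi(t))dy)$ analogous to the model in Section \ref{steincobordism}, and exhibit a small normal pushoff of the torus supported in this collar that modifies the Poincar\'e return map of the characteristic foliation around the divide so as to realize any prescribed slope $r'\neq s$ for the closed rulings.

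The main technical obstacle will be this localization. Since the closed rulings of $\mathcal{F}$ and $\mathcal{F}'$ have different slopes, the two foliations differ globally, so it is not a priori obvious that the isotopy supplied by the Realization Lemma can be taken with support in a neighborhood of the divides. Resolving this requires the explicit collar-model computation outlined above, together with the observation that the global slope of the closed rulings is already determined by the holonomy of the characteristic foliation in an arbitrarily small annular collar of each Legendrian divide; once that is made precise, a perturbation supported in such a collar is enough to change the ruling slope everywhere on $T^2$ while keeping both the dividing set and the Legendrian divides fixed.
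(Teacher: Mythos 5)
The paper does not prove this statement: it is quoted directly from Honda \cite{hondatight1} in a section listing convex-surface-theory tools, so there is no in-paper proof to compare against. Your reduction to Giroux's Realization Lemma is the standard route and is the right skeleton; the construction of the target foliation $\mathcal{F}'$ and the verification that $\Gamma_{T^2}$ still divides it are both fine.

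Where I would push back is on the localization step, which you correctly flag as the crux but whose proposed resolution is stated misleadingly. If $\mathcal{F}$ and $\mathcal{F}'$ are both taken in strict standard form (rulings are literal constant-slope curves, of slopes $r$ and $r'$ respectively), then they disagree on the full complement of the Legendrian divides, so no isotopy supported in a small collar of the divides can carry one to the other. Your claim that ``the global slope of the closed rulings is already determined by the holonomy of the characteristic foliation in an arbitrarily small annular collar of each Legendrian divide'' is not correct as stated: a ruling leaf of the perturbed torus is forced to have slope $r$ on the unperturbed region, so the total winding of a closed leaf is the sum of a fixed slope-$r$ contribution from the complement of the collars plus the adjustable contribution from the collars. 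What is true --- and what the argument needs --- is that this collar contribution can be tuned freely, so that the closed-up leaves of the perturbed foliation lie in the homology class of slope $r'$; the leaves are then slope $r$ away from the divides with compensating twisting packed into the collars, not literal slope-$r'$ lines. Returning to strict standard form with straight slope-$r'$ rulings requires one further application of Giroux flexibility, now rel $\Gamma_{T^2}$ and preserving the homology class of the closed leaves; this is harmless for every use of the lemma in the paper (one only ever needs a Legendrian ruling curve in a prescribed isotopy class along which to run the Imbalance Principle or attach a bypass), but it should be said. With the holonomy claim rephrased as ``controllable'' rather than ``determined,'' and with the straightening step made explicit, your argument goes through.
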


\begin{prop}[\cite{hondatight1}] Assume $T^2\times I$ has convex boundary in standard form and the boundary slope on $T^2\times \{i\}$ is $s_i$ for $i=0,1$. Then, we can find convex tori parallel to $T^2\times\{0\}$ with any boundary slope $s$ in $[s_1,s_0]$ (including $\infty$ if $s_0<s_1$).\label{prop:parallelslopes}\end{prop}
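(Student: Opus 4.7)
The plan is to find bypasses inside $T^2\times I$ and use Honda's Bypass Attachment Lemma to shift the boundary slope. I would first apply Theorem~\ref{thm:flexibility} to perturb the ruling foliations on $T_i:=T^2\times\{i\}$ so that both boundary tori carry a common ruling slope $r$ distinct from $s_0$ and $s_1$. Picking parallel Legendrian rulings $\gamma_i\subset T_i$, the vertical annulus $A\subset T^2\times I$ with $\partial A=\gamma_0\cup\gamma_1$ then has Legendrian boundary of zero twisting.

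After a $C^0$-small isotopy I would make $A$ convex. Its dividing set $\Gamma_A$ meets $T_i$ in $|\langle \Gamma_{T_i},\gamma_i\rangle|$ points, and because $s_0\neq s_1$ these intersection numbers are generically unequal. The Imbalance Principle then forces $\Gamma_A$ to contain a boundary-parallel dividing arc on the side with more intersections, and such an arc yields a bypass half-disk for the opposite boundary torus. Attaching this bypass via the Bypass Attachment Lemma produces a convex torus parallel to the original boundary whose slope jumps, by the standard Farey rule, to the Farey vertex in $[s_1,s_0]$ adjacent to the old slope in the direction dictated by $r$.

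I would then iterate, using Theorem~\ref{thm:flexibility} to vary the ruling slope $r$ between attachments, in order to realize every Farey vertex of $[s_1,s_0]$ as the boundary slope of some parallel convex torus. For an irrational slope $s\in[s_1,s_0]$, I would approximate $s$ from both sides by Farey vertices, work inside the tight $T^2\times I$ pinched between the corresponding convex tori already produced, and extract a convex torus of slope $s$ by a standard Kupka--Smale--type perturbation of the characteristic foliation.

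The main obstacle I anticipate is the Farey bookkeeping. One must verify that, by freely varying $r$, the bypass attachments realize \emph{every} Farey vertex of the interval (not merely a cofinal subset) and that the orientation conventions consistently track which boundary the bypass is being attached to; in particular the ``adjacent in the direction of $r$'' rule must be pinned down with the right sign for the argument to fill the entire interval. Once this combinatorial input is in hand, the irrational case and the wrap-around case $s_0<s_1$ (where $\infty\in[s_1,s_0]$) require no substantively new ideas.
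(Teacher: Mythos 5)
First, a note on the comparison: the paper states this proposition as a quoted result from Honda~\cite{hondatight1} and does not prove it, so there is no in-paper argument to compare against; I assess the proposal on its own. The bypass-and-Imbalance framework is the right toolbox, but as written the proposal contains both a misstatement and a gap that is really the whole problem.

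You have the Imbalance Principle backwards. If $|\Gamma_{T_0}\cdot\partial A|>|\Gamma_{T_1}\cdot\partial A|$, the boundary-parallel dividing arc on the annulus sits near $\partial A\cap T_0$, and the half-disc it cuts off is a bypass for $T_0$, the torus with the \emph{larger} intersection number, not ``the opposite boundary torus.'' The direction matters, since it controls which torus is pushed inward and hence whether the new convex torus lies in the interior of $T^2\times I$.

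The irrational-slope paragraph should simply be deleted. The boundary slope of a convex torus is by definition the slope of its dividing curves, which are embedded, homotopically essential circles on $T^2$; that slope is always rational or $\infty$. There is no irrational case, and no ``Kupka--Smale--type perturbation'' will produce one.

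Finally, what you defer as ``Farey bookkeeping'' is not a side issue; it is the substance of the claim. To run the iteration you must exhibit, for every target $s\in[s_1,s_0]$, a ruling slope $r$ at each step for which (i) the Imbalance Principle actually produces a bypass --- the two counts $|\Gamma_{T_i}\cdot\partial A|$ can be equal for certain $r$, for instance once the running slope and $s_1$ become Farey-adjacent and $r$ is the third vertex of the Farey triangle they span --- and (ii) the resulting Farey jump lands one edge toward $s$ rather than past it. Without that argument the proposal only shows that \emph{some} intermediate slopes are realized, not all of them. You correctly flag this as the ``main obstacle,'' but the proposal stops precisely where the proof has to begin.
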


\begin{thm}[The Farey Tessellation \cite{hondatight1}] Assume $T$ is a convex torus in standard form with $\#\Gamma_T=2$ and boundary slope $s$. If a bypass is attached along a Legendrian ruling curve of slope $r\neq s$ to the ``front" of $T$, then the resulting convex torus $T'$ will have $\#\Gamma_{T'}=2$ and its boundary slope $s'$ is obtained from the Farey tessellation as follows. Let $[r,s]$ be the arc on $\partial\mathbb{D}$ (where $\mathbb{D}$ is the disc model of the hyperbolic plane) running from $r$ to $s$ counterclockwise. Then $s'$ is the point in $[r,s]$ closest to $r$ with an edge to $s$. If the bypass is attached to the ``back" of $T$, then we use the same algorithm except we use the interval $[s,r]$.\label{thm:fareytess} \end{thm}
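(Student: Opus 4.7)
The plan is to reduce to a model case via the $SL(2,\mathbb{Z})$-action on $T^2$ and then carry out an explicit local bypass computation. Since $SL(2,\mathbb{Z})$ acts transitively on $\mathbb{Q}\cup\{\infty\}$, preserves adjacency in the Farey tessellation (edges correspond to pairs of primitive lattice vectors with determinant $\pm 1$), and can be realized by a contactomorphism of a $T^2$-invariant neighborhood of $T$, I may choose $A\in SL(2,\mathbb{Z})$ sending $s$ to $\infty$. In these new coordinates the dividing curves become ``vertical,'' the ruling slope becomes some $r'=p/q$, and the vertices of the Farey tessellation adjacent to $\infty$ are exactly $\mathbb{Z}\cup\{\infty\}$. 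The Farey algorithm then predicts that $s'$ is the integer (possibly $\infty$) lying in the counterclockwise arc from $r'$ to $\infty$ that is closest to $r'$, and any proof in these normalized coordinates will translate back via $A^{-1}$ since the action permutes Farey neighbors of $s$ among Farey neighbors of $\infty$.

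The second step is the local analysis of the bypass. A bypass half-disk $D$ has Legendrian boundary consisting of a subarc of a ruling curve on $T$ together with a complementary arc on the attaching side, and $D\cap T$ meets $\Gamma_T$ in exactly three points. A standard neighborhood of $T\cup D$ produces a convex $T^2\times I$, one of whose boundary components is (a perturbation of) $T$ and the other is $T'$. Using Honda's local bypass model from \cite{hondatight1}, $\Gamma_{T'}$ is computed from $\Gamma_T$ by a prescribed ``band move'' along $D\cap T$, and a case check shows $\#\Gamma_{T'}=2$.

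The base case is when $r$ is already adjacent to $s=\infty$ in the Farey tessellation, i.e., $r$ is an integer. Here a direct inspection of the model shows $s'=r$, which matches the algorithm (when $r$ has an edge to $s$, the closest point in $[r,s]$ to $r$ with an edge to $s$ is $r$ itself). For general $r$, I would invoke Proposition \ref{prop:parallelslopes} to locate an intermediate convex torus $T^{\ast}$ parallel to $T$ whose boundary slope equals the predicted $s'$. Since $T^{\ast}$ separates $T$ from a neighborhood of $D$, the bypass may be viewed as attached to $T^{\ast}$ along a ruling of slope adjacent to $s'$, and the base case applies there. One must also check, using Theorem \ref{thm:flexibility}, that such a ruling slope can be arranged on $T^{\ast}$. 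The ``back'' version of the statement follows by reversing the orientation of $T^2$, which interchanges clockwise and counterclockwise on $\partial\mathbb{D}$ and hence swaps $[r,s]$ with $[s,r]$.

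I expect the main obstacle to be the base-case bypass computation itself: setting up a sufficiently explicit contact $T^2\times I$ with the prescribed ruling and carefully tracking how the three intersection points of $D\cap T$ with $\Gamma_T$ are recombined after the half-disk is pushed across. While the argument is not conceptually deep, it requires careful attention to orientations, to the front/back distinction (which determines whether the middle intersection of the arc of attachment is pushed left or right), and to the combinatorics of the resulting dividing curves. Once that is pinned down, the $SL(2,\mathbb{Z})$-reduction and the parallel-slope proposition reduce the general case to this model calculation, and the equivalence of the band-move outcome with the Farey-tessellation algorithm becomes a routine check.
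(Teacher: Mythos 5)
The paper does not prove this statement: Theorem~\ref{thm:fareytess} is one of several background results collected in Section~\ref{cst} and cited verbatim from Honda's classification paper \cite{hondatight1}, with no argument supplied. So there is no ``paper's own proof'' to compare against; what you have written is an attempted reconstruction of Honda's proof, and I will assess it on those terms.

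Your normalization by $SL(2,\ZZ)$, the identification of the vertices adjacent to $\infty$ with the integers, the reduction of the ``back'' case to the ``front'' case by an orientation reversal, and the observation that $\#\Gamma_{T'}=2$ follows from a local case check of the band move on $D\cap T$ are all in the spirit of Honda's argument. The genuine gap is in the inductive step. You invoke Proposition~\ref{prop:parallelslopes} to produce an intermediate convex torus $T^{\ast}$ ``whose boundary slope equals the predicted $s'$.'' But Proposition~\ref{prop:parallelslopes} only tells you which slopes are realized by parallel convex tori inside a $T^2\times I$ whose two \emph{known} boundary slopes bracket the target; here one of the two boundary slopes is precisely the unknown quantity $s'$ you are trying to determine, so the application is circular. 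Moreover you want $T^{\ast}$ to ``separate $T$ from a neighborhood of $D$,'' but $D$ is attached directly along a ruling curve on $T$; in the $T$-invariant collar on the attaching side, every parallel torus has the same dividing slope $s$, so no torus of slope $s'$ exists there before the bypass is pushed through. Honda's route is different in substance: one shows the layer between $T$ and $T'$ is a minimal (single-bypass) basic slice, uses the basic-slice classification to force $s$ and $s'$ to be Farey neighbors, and then uses the side of attachment together with the ruling slope $r$ to pin down which neighbor on the correct arc is realized, with minimality giving the ``closest to $r$'' clause. In short, the Farey structure comes out of classifying the bypass layer itself, not out of a reduction to an integer-slope base case via parallel tori. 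Your ``base case'' computation would in fact survive as the local model verification inside such an argument, but the scaffolding you built around it would not.
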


\begin{thm}[The Imbalance Principle \cite{hondatight1}] Suppose $\Sigma$ and $\Sigma'$ are two disjoint convex surfaces and let $A$ be a convex annulus whose interior is disjoint from both $\Sigma$ and $\Sigma'$ and whose boundary is Legendrian with one component on each surface. If $|\Gamma_{\Sigma}\cdot\partial A|>|\Gamma_{\Sigma'}\cdot\partial A|$, then by the Giroux Flexibility Theorem \cite{girouxconvex}, there exists a bypass for $\Sigma$ on $A$.\label{thm:imbalance}\end{thm}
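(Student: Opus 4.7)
The plan is to extract a bypass from the dividing set of $A$ by a simple endpoint count and then invoke Giroux Flexibility to promote the resulting half-disk to an honest bypass. Since $A$ is already convex by hypothesis, its dividing set $\Gamma_A$ decomposes as a collection of properly embedded arcs (with endpoints on $\partial A$) together with possibly finitely many simple closed curves. A general feature of convex surfaces meeting along a common Legendrian curve is that the endpoints of $\Gamma_A$ on each boundary circle of $A$ are exactly the intersections of the dividing set of the adjacent surface with $\partial A$. Hence the number of endpoints of $\Gamma_A$ on the component of $\partial A$ lying in $\Sigma$ is $|\Gamma_\Sigma \cdot \partial A|$, and the analogous count on the $\Sigma'$ side is $|\Gamma_{\Sigma'} \cdot \partial A|$.

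Next I would classify the arcs of $\Gamma_A$ into \emph{through arcs} (one endpoint on each component of $\partial A$) and \emph{boundary-parallel arcs} (both endpoints on the same component). Let $t$ denote the number of through arcs and $b_\Sigma, b_{\Sigma'}$ the number of boundary-parallel arcs on the $\Sigma$-side and $\Sigma'$-side respectively. Then
\[
|\Gamma_\Sigma \cdot \partial A| = t + 2b_\Sigma \quad \text{and} \quad |\Gamma_{\Sigma'} \cdot \partial A| = t + 2b_{\Sigma'}.
\]
The imbalance hypothesis then forces $b_\Sigma > b_{\Sigma'} \ge 0$, so at least one boundary-parallel arc $\gamma \subset \Gamma_A$ has both endpoints on $\partial A \cap \Sigma$. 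Among all such arcs I would select one that is \emph{outermost}, i.e.\ such that the half-disk $D' \subset A$ cut off by $\gamma$ contains no other component of $\Gamma_A$ in its interior. Any closed curves of $\Gamma_A$ are irrelevant to this count and can be ignored.

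Finally I would promote $D'$ to a genuine bypass disk for $\Sigma$. The Giroux Flexibility Theorem lets me modify the characteristic foliations on $A$ and on a collar of $\partial A$ in $\Sigma$ (keeping $\Gamma_A$ and $\Gamma_\Sigma$ fixed) so that a Legendrian attaching arc $\alpha \subset \Sigma$ can be realized in a neighborhood of $\partial D' \cap \Sigma$ meeting $\Gamma_\Sigma$ in exactly three points: the two endpoints of $\gamma$ together with one extra intersection produced by extending $\alpha$ slightly across the dividing curve of $\Sigma$ adjacent to $\gamma$. After a corresponding adjustment of $\partial D'$ within $A$ using the Legendrian Realization Principle, $D'$ becomes a convex half-disk whose dividing set is the single $\partial$-parallel arc $\gamma$ with both endpoints on $\alpha$. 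This is exactly the data of a bypass for $\Sigma$ along $\alpha$.

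The step I expect to be the main obstacle is the final one: the counting argument is essentially combinatorial, but the passage from ``an outermost $\partial$-parallel dividing arc on $A$'' to ``a bypass half-disk in standard form'' requires a careful simultaneous application of Giroux Flexibility and the Legendrian Realization Principle to arrange both the attaching arc on $\Sigma$ and the free arc on $A$ to be Legendrian with the prescribed intersection pattern against $\Gamma_\Sigma$. Once this normal form is achieved, $D'$ is manifestly a bypass and the theorem follows.
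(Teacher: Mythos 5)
The paper does not prove this statement; it is one of several convex-surface-theory tools quoted from Honda \cite{hondatight1} in Section \ref{cst} with no argument given, so there is no in-paper proof to compare against.

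Your reconstruction is the standard argument and the arc count $|\Gamma_{\Sigma}\cdot\partial A|=t+2b_{\Sigma}$, $|\Gamma_{\Sigma'}\cdot\partial A|=t+2b_{\Sigma'}$ is indeed the heart of it. Two points deserve tightening. First, equating the number of endpoints of $\Gamma_A$ on $\partial A\cap\Sigma$ with $|\Gamma_{\Sigma}\cdot\partial A|$ is a statement about twisting numbers of the Legendrian boundary component, and it holds only because the framing of $\partial A\cap\Sigma$ induced by $A$ is taken to agree with the one induced by $\Sigma$. This is a standing convention (and in the paper's applications, where $A$ is a vertical annulus between Legendrian ruling curves, the framings automatically match), but the phrase ``a general feature of convex surfaces meeting along a common Legendrian curve'' glosses over it: if the surface framings differed, the two endpoint counts would differ by twice the framing difference and your inequality $b_\Sigma>b_{\Sigma'}$ would not follow. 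Second, as the Edge Rounding Lemma (Lemma \ref{lem:edgeround}) makes explicit, the endpoints of $\Gamma_A$ and of $\Gamma_\Sigma$ on $\partial A\cap\Sigma$ are interleaved rather than literally coincident; so the bypass attaching arc $\alpha\subset\partial A\cap\Sigma$ should be taken to run between the two points of $\Gamma_\Sigma$ immediately outside the endpoints of the outermost arc $\gamma$, with the single interior intersection $\alpha\cap\Gamma_\Sigma$ being the point of $\Gamma_\Sigma$ sitting between the two endpoints of $\gamma$. With those corrections, the Legendrian Realization / Giroux Flexibility step is as you describe and the argument is sound.
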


\begin{lem}[The Edge Rounding Lemma \cite{hondatight1}] Let $\Sigma_1$ and $\Sigma_2$ be convex surfaces with collared Legendrian boundaries which intersect transversely inside an ambient contact manifold along a common boundary Legendrian curve. Assume the neighborhood of the common boundary Legendrian is locally isomorphic to the neighborhood $N_{\epsilon}=\{x^2+y^2\le\epsilon\}$ of $M = \mathbb{R}^2\times(\mathbb{R}/\mathbb{Z})$ with coordinates $(x, y, z)$ and contact 1-form $\alpha= \sin(2\pi nz)dx + \cos(2\pi nz)dy$, for some $n \in\mathbb{Z}^+$, and that $\Sigma_1\cap N_{\epsilon}=\{x=0,0\le y\le \epsilon\}$ and $\Sigma_2\cap N_{\epsilon}=\{y=0,0\le x\le \epsilon\}$. If we join $\Sigma_1$ and $\Sigma_2$ along $x = y = 0$ and round the common edge so that the orientations of $\Sigma_1$ and $\Sigma_2$ are compatible and
induce the same orientation after rounding, the resulting surface is convex, and the dividing curve $z=\frac{k}{2n}$ on $\Sigma_1$ will connect to the dividing curve $z=\frac{k}{2n}-\frac{1}{4n}$ on $\Sigma_2$, where $k=0,1,\cdot\cdot\cdot,2n-1$.\label{lem:edgeround}\end{lem}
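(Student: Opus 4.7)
The plan is to exhibit a single contact vector field transverse to the entire smoothed-out surface, read its dividing set directly off the model, and then match it against the standard dividing sets on $\Sigma_1$ and $\Sigma_2$ by a small isotopy. To set up the smoothing, let $\gamma:[0,1]\to\RR_{\ge 0}^2$ be a smooth curve in the $(x,y)$-plane going through the first quadrant from $(0,y_0)$ to $(x_0,0)$, with $\gamma'(0)$ parallel to $-\partial/\partial y$ and $\gamma'(1)$ parallel to $+\partial/\partial x$, so that $\gamma$ matches $\Sigma_1$ and $\Sigma_2$ tangentially at its endpoints. The orientation compatibility in the hypothesis is precisely what forces the curve into the first quadrant rather than the third; writing the unit tangent as $(\cos\phi(t),\sin\phi(t))$, the angle $\phi$ varies smoothly from $-\pi/2$ at $t=0$ to $0$ at $t=1$. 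The rounded piece is $\Sigma_R=\{(\gamma(t),z):t\in[0,1],\,z\in\RR/\ZZ\}$, and the smooth combined surface $\Sigma$ is obtained by gluing $\Sigma_R$ to $\Sigma_1\cap\{y\ge y_0\}$ and $\Sigma_2\cap\{x\ge x_0\}$.

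Next, I would take $v=\partial/\partial x+\partial/\partial y$. Because the coefficients of $\alpha$ depend only on $z$, $\mathcal{L}_{\partial/\partial x}\alpha=\mathcal{L}_{\partial/\partial y}\alpha=0$, so $v$ is a contact vector field. Transversality to $\Sigma_1$ and $\Sigma_2$ is immediate from their outward normals $+\partial/\partial x$ and $+\partial/\partial y$; for $\Sigma_R$ the outward normal is $(-\sin\phi,\cos\phi,0)$, so $v\cdot n=-\sin\phi+\cos\phi=\sqrt{2}\cos(\phi+\pi/4)\ge 1>0$ for all $\phi\in[-\pi/2,0]$. Hence $\Sigma$ admits a global transverse contact vector field, proving convexity, and its dividing set is $\Gamma=\{\alpha(v)=0\}$. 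Computing $\alpha(v)=\sin(2\pi nz)+\cos(2\pi nz)=\sqrt{2}\sin(2\pi nz+\pi/4)$ gives $2n$ horizontal circles $z=k/(2n)-1/(8n)$, $k=0,\ldots,2n-1$, each a single connected component of $\Gamma$ running from $\Sigma_1$ through $\Sigma_R$ to $\Sigma_2$.

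Finally I would compare with the standard dividing sets. On $\Sigma_1$, the transverse contact vector field $\partial/\partial x$ satisfies $\alpha(\partial/\partial x)=\sin(2\pi nz)$, so the standard dividing curves sit at $z=k/(2n)$; the $v$-dividing circle at $z=k/(2n)-1/(8n)$ is isotopic to this standard curve by a small upward shift of $1/(8n)$. On $\Sigma_2$, $\alpha(\partial/\partial y)=\cos(2\pi nz)$ gives standard dividing curves at $z=(2k+1)/(4n)$, and the same $v$-dividing circle sits just above the standard curve at $z=(2(k-1)+1)/(4n)=k/(2n)-1/(4n)$, isotopic to it by a small downward shift. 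Therefore the standard dividing curve $z=k/(2n)$ on $\Sigma_1$ and the standard curve $z=k/(2n)-1/(4n)$ on $\Sigma_2$ lie on a common connected component of $\Gamma$, which is the stated conclusion. The main subtlety I anticipate is justifying that ``shift up on $\Sigma_1$, shift down on $\Sigma_2$'' is the canonical identification of isotopic families of parallel circles on an annulus; this asymmetry is not arbitrary, but is forced by the asymmetric roles of $\sin$ and $\cos$ in the model and is exactly the $1/(4n)$ offset that must be produced by any globally transverse contact vector field.
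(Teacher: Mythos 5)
The paper does not prove this lemma: it is stated as a cited ``building block'' result from Honda's \cite{hondatight1} in the survey-style Section \ref{cst}, and used without proof throughout Section \ref{fillable}. So there is no in-paper argument to compare against. Evaluating your proof on its own terms: it is correct, and it is in fact the standard argument (essentially the one Honda gives). The key moves all check out. The vector field $v=\partial/\partial x+\partial/\partial y$ is a contact vector field because the coefficients of $\alpha=\sin(2\pi nz)\,dx+\cos(2\pi nz)\,dy$ are $x,y$-independent; $v$ is transverse to the flat pieces with normals $\partial/\partial x$ and $\partial/\partial y$, and on the rounded piece $v\cdot n=-\sin\phi+\cos\phi\ge 1$ for $\phi\in[-\pi/2,0]$. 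The dividing set in the model is then $\{\alpha(v)=0\}=\{\sqrt{2}\sin(2\pi nz+\pi/4)=0\}$, i.e.\ $z=k/(2n)-1/(8n)$, and this sits exactly halfway between the $\Sigma_1$-dividing level $z=k/(2n)$ (from $\partial/\partial x$) and the $\Sigma_2$-dividing level $z=k/(2n)-1/(4n)$ (from $\partial/\partial y$), so the monotone shift you describe identifies the connected components correctly.

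Two small remarks. First, the dividing set you compute inside $N_\epsilon$ consists of $2n$ \emph{arcs}, not circles, since $\Sigma\cap N_\epsilon$ is an annulus with boundary $\{y=\epsilon\}\cup\{x=\epsilon\}$; the arcs continue into the dividing sets of $\Sigma_1$ and $\Sigma_2$ outside the model. Second, the ``subtlety'' you flag at the end is not actually a gap: the passage from the $v$-dividing set to the $\partial/\partial x$- (resp.\ $\partial/\partial y$-) dividing set on the flat collar is realized by the path of transverse contact vector fields $(1-s)\,\partial/\partial x+s\,v$ (resp.\ $(1-s)\,\partial/\partial y+s\,v$) for $s\in[0,1]$; each zero level of $\alpha$ evaluated on this family moves monotonically in $z$ without ever crossing another zero level, which pins down the identification of components and produces exactly the $1/(4n)$ net offset.
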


We will use these tools in the following context. Let $\Sigma$ be a pair of pants and consider a contact 3-manifold $S^1\times \Sigma$. Identify each boundary component of $-\partial (S^1\times \Sigma)$ with $\mathbb{R}^2/\mathbb{Z}^2$ by setting $(0,1)^T$ as the direction of the $S^1-$fiber and $(1,0)^T$ as the direction given by $-\partial(\{pt\}\times\Sigma)$. Let $T_0$ and $T_1$ be convex tori isotopic to two different boundary components of $S^1\times\Sigma$ and suppose these tori have boundary slopes $\frac{b}{a}$ and $\frac{t}{s}$, respectively, where $a,s>0$. Moreover, assume both dividing sets have $2k$ curves. By Theorem \ref{thm:flexibility}, we can arrange that the Legendrian rulings on both tori have infinite slope. Suppose there exists a convex ``vertical" annulus $A$ whose boundary components lie on Legendrian rulings of each torus. If $a\neq s$, then by the Imbalance Principle, there will exist a bypass along either $T_0$ or $T_1$.  If $a=s$ then there will either exist a bypass along both $T_0$ and $T_1$ or there will be no bypasses. If there do exist bypasses and $k>1$, then attaching the bypasses decreases $k$ by 1, but leaves the boundary slope unchanged. If $k=1$, then attaching the bypasses decreases the boundary slopes as described in Theorem \ref{thm:fareytess}. If there do not exist bypasses, then we may use the Edge Rounding Lemma four times to produce a new torus $T$ made up of $T_0, T_1$, and two parallel copies of $A$. Notice that $T$ now contains exactly 2 dividing curves, each of which wraps around $T$ $(kb+kt+1)$-times in the $S^1-$direction and $ka$-times in the $-\partial(\{pt\}\times\Sigma)$-direction. Thus the boundary slope of $T$ is $\frac{kb+kt+1}{ka}=\frac{b}{a}+\frac{t}{a}+\frac{1}{ka}$.

\subsection{Twisting}\label{twist} 
Consider a tight contact structure $\xi$ on $T^2\times [0,1]$ with $s(\Gamma_{T^2\times\{i\}})=s_i$ for $i=0,1$. $\xi$ is called \textit{minimally twisting} if every convex torus parallel to the boundary has slope $s$ between $s_0$ and $s_1$. Let $\alpha(s_i)$ denote the standard angle associated to $s_i$, thought of as sitting in $\mathbb{R}^2.$ The \textit{$I-$twisting} $\beta_I$ of $\xi$ is defined the total change in $\alpha$ as we traverse $T^2\times [0,1]$ in the $I-$direction. See Section 0.0.1 in \cite{hondatight2} for the precise definition. 

Now let $C$ be a $T^2$-bundle over $S^1$. A tight $\xi'$ on $C$ is called \textit{minimally twisting in the $S^1-$direction} if every splitting of $C$ along a convex torus isotopic to a fiber yields a minimally twisting $(T^2\times I,\xi)$. The \textit{$S^1-$twisting} $\beta_{S^1}$ of $\xi'$ is defined to be the supremum, over all convex tori $T$ isotopic to a fiber, of $l\pi$, where $l\in\mathbb{Z}^+$ and $l\pi \le \beta_I < (l+1)\pi$ on the $T^2\times I$ obtained by cutting along $T$.

Note that by definition, $(T^2\times I,\xi)$ has $I-$twisting $2n\pi$ or $(2n+1)\pi$ if and only if $\text{tor}(T^2\times I,\xi,[T^2\times\{pt\}])=n$. Similarly, $(C,\xi')$ has $S^1-$twisting $2n\pi$ or $(2n+1)\pi$ if and only if $\text{tor}(C,\xi',[T])=n$.

%Let $\alpha(s)$ denote the standard angle of the dividing curve in $\mathbb{R}^2$. The \textit{$I-$twisting} of $\xi$ is defined to be $\alpha(s_0)-\alpha(s_1)=\sum_{k=1}^l\alpha(s_{\frac{{k-1}{l}})-\alpha(s_{\frac{k}{l}})$, where $(i)$ $s_{\frac{k}{l}}=s(\Gamma_{T^2\times\{\frac{k}{l}\}$, $(ii)$ $\{T^2\times\{\frac{k}{l}\}\}$ are mutually disjoint convex tori parallel to the boundary, arranged in order from closest to $T^2\times\{0\}$ to farthest from $T^2\times\{0\}$, $(iii)$ $\xi$ is minimally twisting between $T^2\times\{\frac{k-1}{l}\}$ and $T^2\times\{\frac{k}{l}\}$, and $(iv)$ $\alpha(s_{\frac{k}{l}})\le \alpha(s_{\frac{k-}{l}})<\alpha(s_{\frac{k}{l}})+\pi$.

\section{Proof of Theorem \ref{thm:fillablestrs}}\label{fillable}

\subsection{Decomposing $Y_{\pm}$}\label{sfs}
Let $C_{\pm}$ denote a plumbed 3-manifold obtained as the boundary of a length $n>1$ cyclic plumbing, $Z_{\pm}$, as depicted in Figure \ref{generalcyclicplumbing}, where $a_i\ge2$ for all $i$. Then $C_{\pm}$ is a $T^2-$bundle over $S^1$. Endow $T^2\times[0,1]=\mathbb{R}^2/\mathbb{Z}^2\times[0,1]$ with the coordinates $(\textbf{x}^T,t)=(x,y,t)$. Then by Theorem 6.1 of \cite{neumann}, $C_{\pm}$ is of the form $T^2\times[0,1]/(\textbf{x},1)\sim(\pm B\textbf{x},0)$, where $$B=B(a_1,...,a_n)=\begin{pmatrix}
p&q\\
-p'&-q'\\
\end{pmatrix}, \frac{p}{q}=[a_1,...,a_n], \textrm{ and } \frac{p'}{q'}=[a_1,...,a_{n-1}].$$
Note that since det$B=1$, we have $p'q-q'p=1$.

Let $Y_{\pm}$ denote the plumbed 3-manifold obtained as the boundary of the plumbing, $X_{\pm}$, depicted in Figure \ref{generalsingularplumbing}, which has a cycle of length $n>1$ and where $a_i,z_j\ge 2$ for all $i,j$ and $a_1\ge3$. Let $T\subset Y$ be a torus associated with the plumbing operation that plumbs together the $-a_1$- and $-a_n$-framed vertices. Cutting along this torus, we obtain a manifold, $Y'_{\pm}$ with two torus boundary components, $T_0$ and $T_1$.  It is easy to see that $Y'_{\pm}$ is a Seifert fibered space over the annulus with a single singular fiber $F$, given by the arm with framings $(-z_1,...,-z_n)$. This structure can be built explicitly using the methods of \cite{orlikseifertmanifolds}.

%Let  $T^2\times[0,1]=\mathbb{R}^2/\mathbb{Z}^2\times[0,1]$ have the coordinates $(x,y,t)$, let $T_i=T^2\times\{i\}$ for $i=0,1$ and identify $T_i$ with $\mathbb{R}^2/\mathbb{Z}^2$ by $\partial_{y}=(1,0)^T=\mu$ and $\partial_{x}=(0,1)^T=\lambda$. Then $\partial(T^2\times [0,1])=T_1-T_0$. Note that the orientation we are using on $T_1$ is opposite the orientation used above (this convention will be useful later when we apply Honda's classifications of tight contact structures). 

$Y_{\pm}'$ can be obtained by starting with $T^2\times[0,1]$ and performing $-\frac{r}{s}=[-z_1,...,-z_m]$-surgery  along a curve $S^1\times\{pt\}\times\{pt\}\subset T^2\times(0,1)$ (See Figure \ref{gamma}). The framing is defined with respect to the $S^1-$direction, $\lambda$. The core of the solid torus obtained after surgery along this curve is the singular fiber $F$. Let $V$ be a tubular neighborhood of $F$. Then $Y_{\pm}' - V\cong S^1\times\Sigma$, where $\Sigma$ is a pair of pants (See Figure \ref{circlepants}). Identify $\partial V$ with $\mathbb{R}^2/\mathbb{Z}^2$ by choosing $(1,0)^T$ as the meridional direction and $(0,1)^T$ as the longitudinal direction and let $T_2$ denote the boundary component of $-\partial(S^1\times\Sigma)$ that is glued to $\partial V$.  Let $T_0=T^2\times\{0\}$ and $T_1=-T^2\times\{1\}$ and identify $T_i$ with $\mathbb{R}^2/\mathbb{Z}^2$ by setting $(1,0)^T=\mu$ as the direction given by $-\partial(\{pt\}\times\Sigma)$ and $(0,1)^T=\lambda$ as the direction given by the $S^1-$fiber. With this identification, the gluing map $T_1\to T_0$ is now given by  
$$A=\begin{pmatrix}
-p & q\\
p' & -q'\\
\end{pmatrix}$$ where $\frac{p}{q}=[a_1,...,a_n]$ and $\frac{p'}{q'}=[a_1,...,a_{n-1}]$. Moreover, the gluing map $g:\partial V \to -\partial(S^1\times\Sigma)$ is given by
$$g=\begin{pmatrix}
r&r'\\
-s&-s'\\
\end{pmatrix}
$$
where $\frac{r'}{s'}=[z_1,...,z_{m-1}]$. In particular, det$(g)=r's-s'r=1$. With these conventions set up, we have $-\partial(Y_{\pm}'-V)=T_0+T_1+T_2$.

\begin{figure}[h]
\centering
\begin{subfigure}{.45\textwidth}
\centering
\includegraphics[scale=.5]{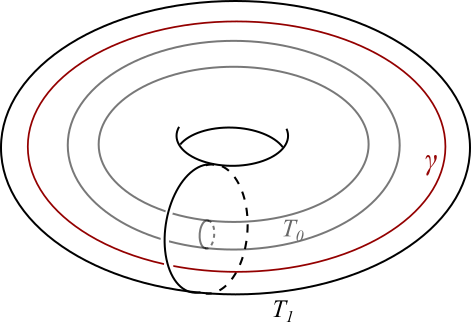}
\caption{The surgery curve $\gamma$ in $T^2\times[0,1]$}\label{gamma}
\end{subfigure}
\begin{subfigure}{.45\textwidth}
\centering
\includegraphics[scale=.5]{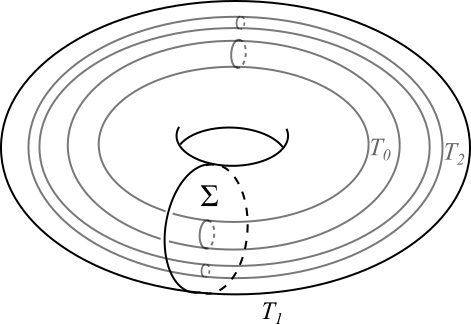}
\caption{$Y'_{\pm}-V\cong S^1\times\Sigma$}\label{circlepants}
\end{subfigure}
\caption{}\label{decomposition}
\end{figure}

We end this section by defining a notion of twisting analogous to the notions defined in Section \ref{twist}. Let $\xi$ be a tight contact structure on $Y_{\pm}$ and let $T\subset Y_{\pm}$ be the incompressible torus described above.

\begin{definition} The \textit{twisting} of $\xi$ is the the supremum, over all toric annuli $T^2\times I$ with $T^2\times\{pt\}$ isotopic to $T$, of $l\pi$, where $l\in\mathbb{Z}^+$ and $T^2\times I$ has $I-$twisting $l\pi$. We say $\xi$ is \textit{minimally twisting} if there exists no such toric annulus.\end{definition}%supremum, over all pairs of convex tori $(T,T')$ isotopic to $T_0$, of $l\pi$, where $l\in\mathbb{Z}^{\ge0}$ and $l\pi \le \beta_I < (l+1)\pi$ on the $T^2\times I$ satisfying $T^2\times\{0\}=T$ and $T^2\times\{1\}=T'$... such that there exists a contact embedding of $(T^2\times I,\xi_l=\ker(\sin(l\pi z)dx+\cos(l\pi z)dy))$ with $T^2\times\{0\}$ isotopic to $T_0$

\begin{remark} Note that $\xi$ has twisting $2n\pi$ or $(2n+1)\pi$ if and only if $\text{tor}(Y_{\pm},\xi,[T])=n$.\end{remark}

\subsection{The upper bound}
Let $Y=Y_{\pm}$. We will distinguish between these two cases when necessary. Let $\xi$ be a tight contact structure on $Y$. Using the notation from Section \ref{sfs}, let $T$ be an incompressible convex torus that we can cut along to obtain $Y'$ and let $\Gamma_T$ denote the dividing set. After cutting along $T$, let $\Gamma_{T_i}$ denote the image of the dividing set on $T_i$ for $i=0,1$. With the coordinates described in Section \ref{sfs}, let $\Gamma_{T_0}=a\mu+b\lambda$, where $(a,b)=1$. Since $T_0$ and $T_1$ are identified by the map $\pm A$, the dividing set on $T_1$ is of the form $\Gamma_{T_1}=(bq+aq')\mu+(bp+ap')\lambda$ for $Y_+$ and $\Gamma_{T_1}=-(bq+aq')\mu-(bp+ap')\lambda$ for $Y_-$. Now isotope the singular fiber $F$ so that it is Legendrian and has very negative twisting number $-m<<0$, relative to a fixed framing. Then we may take $V$ to be a standard tubular neighborhood of $F$ with convex boundary so that the slope of the dividing set is $-\frac{1}{m}$ and $\#\Gamma_{\partial V}=2$ (See section 2.3.2 of \cite{ghigginischonenberger}). Thus the dividing set on $T_2\subset-\partial(S^1\times\Sigma)$ is of the form $\Gamma_{T_2}=(-mr+r')\mu-(-ms+s')\lambda$ and $\#\Gamma_{T_2}=2$. 

The slopes of these three dividing curves are as follows:
$$\displaystyle s(\Gamma_{T_0})=\frac{b}{a} \qquad s(\Gamma_{T_1})=\frac{bp+ap'}{bq+aq'} \qquad s(\Gamma_{T_2})=-\frac{ms-s'}{mr-r'}$$

Notice, for all relatively prime $a$ and $b$, $\frac{bp+ap'}{bq+aq'}$ is a reduced fraction, since $(\alpha q-\beta q')(bp+ap')+(\beta p'-\alpha p)(bq+aq')=1$, where $\alpha,\beta$ are integers such that $\alpha a+\beta b=1$. Furthermore, $-1\le s(\Gamma_{T_2})<0$. We can view $A^{-1}$ as a real-valued function that maps the slopes on $T_0$ to the slopes on $T_1$ given by $f(x)=\frac{xp+p'}{xq+q'}$. Since $f$ is a decreasing function of each interval of its domain, we have the relationship between slopes on $T_0$ and $T_1$ shown in Figure \ref{slopenumberline}.

\begin{figure}[h]
\includegraphics[scale=.45]{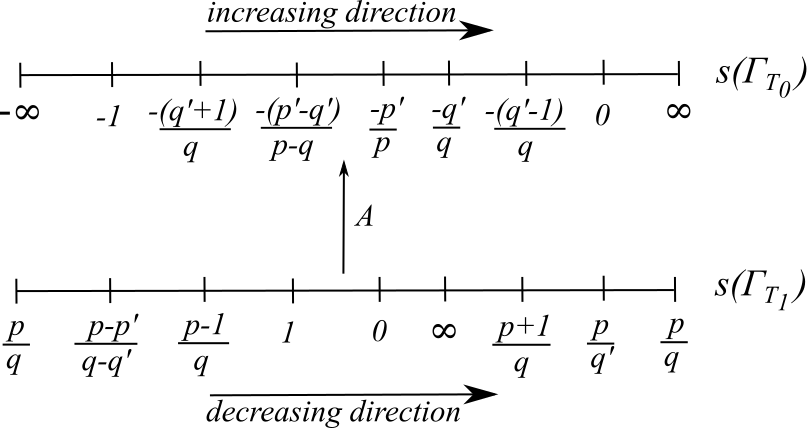}
\caption{Relationship between slopes on $T_0$ and $T_1$ via the gluing map}\label{slopenumberline}
\end{figure}

By the flexibility of Legendrian rulings (Theorem \ref{thm:flexibility}), we may arrange so that the Legendrian rulings on each torus has slope $\infty$ as long as the dividing sets do not have infinite slope. A convex annulus connecting two tori along such Legendrian rulings is called a \textit{vertical annulus}. Whenever possible, we will assume that the Legendrian rulings have infinite slope. Throughout this section, we assume that all tori and annuli are convex.

We have the following three cases:
\begin{itemize}
\item $|a|<|bq+aq'|$ if and only if $-\infty<s(\Gamma_{T_0})<-\frac{q'+1}{q}$ or $-\frac{q'-1}{q}<s(\Gamma_{T_0})<\infty$.
\item $|a|>|bq+aq'|$ if and only if $-\frac{q'+1}{q}<s(\Gamma_{T_0})<-\frac{q'-1}{q}$. 
\item $|a|=|bq+aq'|$ if and only if $s(\Gamma_{T_0})=-\frac{q'\pm1}{q}$. 
\end{itemize}

\noindent Let $2k$ be the number of dividing curves on $T_0$ and $T_1$. If $a\neq0$ and $bq+aq'\neq0$, then take a vertical annulus $A$ between $T_0$ and $T_1$. Then $|\Gamma_{T_0}\cdot\partial A|=|2ka|$ and $|\Gamma_{T_1}\cdot\partial A|=|2k(bq+aq')|$. By the Imbalance Principle (Theorem \ref{thm:imbalance}), if we are in the first case, then there exists a bypass along $T_1$. If we are in the second case, then there exists a bypass along $T_2$. If we are in the third case, then there are either bypasses along both tori or there are no bypasses. If $a=0$ (or $bq+aq'=0$), then we can take an annulus between a Legendrian divide of $T_0$ (or $T_1$, respectively) and a Legendrian ruling of $T_1$ (or $T_0$, respectively) and use the Imbalance Principle to see that there is a bypass along $T_0$ (or $T_1$, respectively). We will explore these cases in the following two propositions.

\begin{prop} If $a_i,z_j\ge2$ for all $i,j$ and $a_1\ge 3$, then we can choose $T$ so that $\#\Gamma_{T_0}=\#\Gamma_{T_1}=2$.\label{prop:divset2}\end{prop}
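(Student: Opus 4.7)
The plan is an induction on $k = \tfrac{1}{2}\#\Gamma_T$: starting from an arbitrary convex representative with $\#\Gamma_T = 2k$, I will show that if $k > 1$ we can produce a parallel convex torus with $\#\Gamma = 2(k-1)$ at the same boundary slope, and iterate down to $k = 1$. The mechanism is standard: a bypass attached to $T$ yields a parallel convex torus on the other side of the bypass disc, and when $k > 1$ the number of dividing curves drops by $2$ while the slope is preserved. Thus it suffices, at each stage, to produce a bypass along $T_0$ or $T_1$ inside $S^1\times\Sigma \cong Y'_{\pm} - V$, where $V$ is a standard neighborhood of a Legendrian representative of the singular fiber $F$ with twisting $-m \ll 0$, chosen so that $\#\Gamma_{T_2} = 2$.

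First, apply Theorem~\ref{thm:flexibility} to give all three boundary tori infinite ruling slope (after a preliminary isotopy of the dividing slopes off $\infty$, if needed). Let $A$ be a vertical convex annulus between $T_0$ and $T_1$. With $\Gamma_{T_0} = a\mu + b\lambda$ the intersection numbers are $|\Gamma_{T_0}\cdot\partial A| = 2k|a|$ and $|\Gamma_{T_1}\cdot\partial A| = 2k|bq + aq'|$. In Cases 1 and 2 of the trichotomy set up after Proposition~\ref{prop:parallelslopes}, i.e., $|a| \neq |bq + aq'|$, the Imbalance Principle (Theorem~\ref{thm:imbalance}) immediately produces a bypass on the torus with the larger intersection number, completing the reduction step. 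If $a=0$ or $bq+aq'=0$, replace one Legendrian ruling with a Legendrian divide and argue the same way, as indicated in the excerpt.

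The main obstacle is Case 3, $|a| = |bq + aq'|$, which pins $s(\Gamma_{T_0})$ to one of $-(q'+1)/q$ or $-(q'-1)/q$ and in which $A$ may yield no bypass. I address this by bringing in the third torus $T_2$: take a vertical convex annulus $A'$ between $T_0$ and $T_2$, with intersections $2k|a|$ and $2|mr - r'|$. If $k|a| > |mr - r'|$ the Imbalance Principle yields a bypass on $T_0$ and we are done; otherwise the bypass lies on $T_2$, and attaching it shifts $s(\Gamma_{T_2})$ one step along the Farey tessellation (Theorem~\ref{thm:fareytess}) while keeping $\#\Gamma_{T_2} = 2$. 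Iterating, $|mr - r'|$ evolves through these Farey moves, and the hypothesis $a_1 \geq 3$ (which forces $p/q \geq 2$ and locates the degenerate slopes $-(q'\pm 1)/q$ inside a controlled Farey region) is what ensures the procedure terminates with a bypass along $T_0$ rather than looping. The delicate part of the argument is precisely this Farey bookkeeping, together with ruling out the sporadic configuration in which every candidate annulus fails to yield any bypass; in that residual case one falls back on the Edge Rounding Lemma (Lemma~\ref{lem:edgeround}) applied to $T_0 \cup 2A \cup T_1$ to construct, directly, a parallel convex torus with exactly two dividing curves, contradicting minimality of $k$.
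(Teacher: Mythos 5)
Your overall architecture (Imbalance Principle to peel off dividing curves, Edge Rounding as a fallback, Farey moves on $T_2$) is the right toolbox, and your handling of the cases $|a|\neq|bq+aq'|$, $a=0$, and $bq+aq'=0$ matches the paper. But the balanced case $|a|=|bq+aq'|$ — which is exactly the case where the hypothesis $a_1\geq 3$ is needed — has two real gaps.

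First, the ``Farey bookkeeping'' step. You attach bypasses to $T_2$ one at a time, claiming that this ``terminates with a bypass along $T_0$ rather than looping.'' That claim is not justified, and in fact the process can simply stall: the Imbalance Principle only guarantees a bypass when the two intersection numbers are \emph{strictly} unequal, so the iteration on $T_2$ can perfectly well halt at a torus where $|\Gamma_{T_2}\cdot\partial A'|=2k|a|$ with no bypass on either side, and you have made no progress on $k$. Saying that $a_1\geq 3$ ``locates the degenerate slopes inside a controlled Farey region'' is not an argument; you would need to identify exactly which slope $s(\Gamma_{T_2})$ the process stabilizes at and verify the resulting imbalance against $T_0$, and nothing in the proposal does this.

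Second, the fallback via Edge Rounding. You claim that rounding $T_0\cup 2A\cup T_1$ produces ``directly, a parallel convex torus with exactly two dividing curves, contradicting minimality of $k$.'' But the torus you obtain is parallel to $-T_2$, not to $T_0$ or $T_1$; having two dividing curves on a torus parallel to $T_2$ is no contradiction at all, since $\#\Gamma_{T_2}=2$ was built in from the start. What the paper actually does here is the crux of the proof: compute the slope of the rounded torus, namely $\frac{p-q'\mp 2}{q}+\frac{1}{kq}$, and use Lemma~\ref{lem1} (where $a_1\geq 3$ enters, giving $p\geq 2q+1$) to show this slope exceeds $1$; translated to $T_2$ this means there is a parallel torus of slope less than $-1$, so Proposition~\ref{prop:parallelslopes} yields an intermediate torus of slope exactly $-1$ with two dividing curves; only \emph{then} does a vertical annulus from this new $T_2$ to $T_0$ give the imbalance $2kq$ versus $2$ and hence bypasses on $T_0$ lowering $k$. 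Your proposal skips the slope computation and the use of Proposition~\ref{prop:parallelslopes} entirely, which is precisely where $a_1\geq 3$ does its work.
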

\begin{proof}

Suppose $a\neq0$, $bq+aq'\neq0$, and $\#\Gamma_{T_0}=\#\Gamma_{T_1}=2k$ for some $k>0$. Take a vertical annulus $A$ between $T_0$ and $T_1$. If $|a|\neq |bq+aq'|$, then by the Imbalance Principle (Theorem \ref{thm:imbalance}) there exists a bypass along a Legendrian divide of either $T_0$ or $T_1$ on $A$. Without loss of generality, assume $|bq+aq'|>|a|$. Then we may attach a bypass to $T_0$, giving us a new torus $T_0'$ isotopic to $T_0$ such that $s(\Gamma_{T_0'})=s(\Gamma_{T_0})$ and  $|\Gamma_{T_0'}\cdot\partial A|=|2(k-1)a|$. Thus, there exists an incompressible torus $T'$ isotopic to $T$ in $Y$ such that if we cut along $T'$ to obtain $Y'$, the new boundary tori $T_0'$ and $T_1'$ have the same boundary slopes as $T_0$ and $T_1$, but with two fewer dividing curves. Continuing this recutting process, we are able to arrange that $\#\Gamma_{T_0}=\#\Gamma_{T_1}=2$.

If $|a|=|bq+aq'|$, then $s(\Gamma_{T_0})=\frac{b}{a}=-\frac{q'\pm1}{q}$ and so $s(\Gamma_{T_1})=\frac{p\pm1}{q}$ (Note that these fractions may not be reduced, but their reduced fractions will still have the same denominators by Lemma \ref{lem2} in the Appendix). Assume the fractions are reduced. Take a vertical annulus between $T_0$ and $T_1$. If there exist bypasses along $T_0$ and $T_1$, we can attach the bypasses to lower $k$ and recut $Y$ along one of these new tori. If we can continue this until $k=1$, then we are done. Suppose there exists a $k>1$ such that there are no more bypasses. Then we may use the Edge Rounding Lemma (Lemma \ref{lem:edgeround}) to obtain a torus parallel to $-T_2$ with two dividing curves of slope $\frac{p-q'-2}{q}+\frac{1}{kq}>\frac{p-q'-2}{q}$ (if $\frac{b}{a}=-\frac{q'+1}{q}$) or $\frac{p-q'+2}{q}+\frac{1}{kq}>\frac{p-q'+2}{q}$ (if $\frac{b}{a}=-\frac{q'-1}{q}$). By Lemma \ref{lem1} in the Appendix, both of these slopes are greater than 1 since $a_1\ge3$. Thus, there is a torus, $T_2'$, parallel to $T_2$ with slope less than $-1$. Since $s(\Gamma_{T_2})>-1$, by Proposition \ref{prop:parallelslopes}, we can find a torus ``between"  $T_2'$ and $T_2$ with boundary slope $-1$ and two dividing curves. With abuse of notation, call this new torus $T_2$. Now, take a vertical annulus $A$ between $T_0$ and $T_2$. Then $|\Gamma_{T_2}\cdot\partial A|=2$ and $|\Gamma_{T_0}\cdot\partial A|=|2kq|$. Thus, by the Imbalance Principle, we may add bypasses to $T_0$ and lower $k$ until it is equal to 1. Recut $Y$ along this new torus to obtain the result. If $-\frac{q'\pm1}{q}$ is not reduced, then the same argument holds, since after edge rounding, we will obtain a torus of slope even greater than $\frac{p-q'-2}{q}+\frac{1}{kq}$ or $\frac{p-q'+2}{q}+\frac{1}{kq}$.

If $a=0$ so that $s(\Gamma_{T_0})=\infty$, then $s(\Gamma_{T_1})=\frac{p}{q}$. Take a vertical annulus $A$ from a Legendrian divide of $T_0$ to a Legendrian ruling of $T_1$. Then $|\Gamma_{T_0}\cdot\partial A|=0$ and $|\Gamma_{T_1}\cdot\partial A|=|2kq|$. We can thus add bypasses along $T_1$ until $k=1$. Recutting along this new torus, we obtain the result. We can similarly obtain the result if $bq+aq'=0$.\end{proof}

\begin{prop} If $a_i,z_j\ge2$ for all $i,j$ and $a_1\ge3$, then we can choose $T$ and $V$ so that $s(\Gamma_{T_0})=-1$, $s(\Gamma_{T_1})=\frac{p-p'}{q-q'}$, and $s(\Gamma_{T_2})=-1$.
\label{prop:boundaryslopes}\end{prop}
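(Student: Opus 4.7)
After applying Proposition \ref{prop:divset2} we may assume $\#\Gamma_{T_0}=\#\Gamma_{T_1}=2$, and the standard neighborhood construction of $V$ already gives $\#\Gamma_{T_2}=2$. Since any bypass attachment on a convex torus with exactly two dividing curves changes the boundary slope via the Farey tessellation (Theorem \ref{thm:fareytess}), the task reduces to producing appropriate bypasses. The second equality in the claim is automatic: substituting $(a,b)=(1,-1)$ into the formula $s(\Gamma_{T_1})=(bp+ap')/(bq+aq')$ dictated by the gluing map yields exactly $(p-p')/(q-q')$. Hence it suffices to arrange $s(\Gamma_{T_0})=-1$ and $s(\Gamma_{T_2})=-1$.

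To arrange $s(\Gamma_{T_0})=-1$, I would mimic the case analysis in the proof of Proposition \ref{prop:divset2}. Write $s(\Gamma_{T_0})=b/a$ in lowest terms. If $b/a\neq -1$, then Theorem \ref{thm:flexibility} allows us to choose Legendrian rulings on $T_0$, $T_1$, and $T_2$ of any desired slope, so a convex vertical annulus $A$ from $T_0$ to one of the other two tori can be built. The Imbalance Principle (Theorem \ref{thm:imbalance}) then yields a bypass on $T_0$; attaching it and recutting $Y$ along the resulting torus changes $s(\Gamma_{T_0})$ by a Farey move. Choosing the ruling slope near $-1$ whenever possible forces these moves to shift $s(\Gamma_{T_0})$ monotonically (in Farey-arc distance) toward $-1$. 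The degenerate cases $a=0$ and $|a|=|bq+aq'|$ need separate treatment via Legendrian divides and the Edge Rounding Lemma, exactly as in the proof of Proposition \ref{prop:divset2}, using Lemma \ref{lem1} in the Appendix. After finitely many steps $s(\Gamma_{T_0})=-1$, and then $s(\Gamma_{T_1})=(p-p')/(q-q')$ is forced by the gluing formula.

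To arrange $s(\Gamma_{T_2})=-1$, take a convex vertical annulus $A$ from $T_0$ (now of slope $-1$) to $T_2$. If $m$ was chosen large enough then $|\Gamma_{T_2}\cdot\partial A|>|\Gamma_{T_0}\cdot\partial A|$, so the Imbalance Principle produces a bypass on $T_2$. Attaching bypasses one at a time and enlarging $V$ to include each resulting parallel torus applies Farey moves to $s(\Gamma_{T_2})$ inside the interval $[-1,0)$ that step it monotonically toward $-1$; after finitely many steps we reach $s(\Gamma_{T_2})=-1$, at which point the enlarged $V$ realizes the required configuration.

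The main obstacle is verifying that the Farey moves on $s(\Gamma_{T_0})$ actually terminate at $-1$ rather than overshooting to a neighboring integer slope such as $0$, $-2$, or $\infty$. This is precisely where the hypothesis $a_1\ge 3$ enters: it forces the exceptional slopes $-(q'\pm 1)/q$ arising from the Edge Rounding computation of Proposition \ref{prop:divset2} to lie strictly on one side of $-1$ in the Farey graph, so that every bypass produced by the Imbalance Principle induces a Farey move directed toward $-1$ rather than away from it. Combined with the monotonicity of Farey-arc distance under bypass attachment, this guarantees termination at the target slope.
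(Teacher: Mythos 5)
Your overall strategy is the same as the paper's — first reduce to arranging $s(\Gamma_{T_0})=-1$ (and then $s(\Gamma_{T_1})$ follows from the gluing formula, and $s(\Gamma_{T_2})=-1$ follows by Imbalance/Farey between $T_0$ and $T_2$) — but the crucial middle step, actually driving $s(\Gamma_{T_0})$ to $-1$, is not established. There are two concrete gaps.

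First, you assert that taking a vertical annulus from $T_0$ to another boundary torus and applying the Imbalance Principle ``yields a bypass on $T_0$.'' That is false in general: the bypass lands on whichever torus has the \emph{larger} intersection number with $\partial A$, and when $|a|<|bq+aq'|$ the bypass appears on $T_1$, not $T_0$. The paper's proof splits into cases precisely along the dichotomy $|a|>|bq+aq'|$ versus $|a|<|bq+aq'|$ versus equality, and in the second case one must add bypasses to $T_1$ and recut before returning to the $T_0$ picture. Your ``choose the ruling slope near $-1$'' device does not rescue this: the imbalance is governed by intersection numbers, which depend on the slopes of the dividing sets, not just on a free choice of ruling slope, and you have not verified any monotonicity of the resulting Farey moves. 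Second, you never address the case where the Imbalance Principle produces \emph{no} bypasses at all (boundary-parallel dividing curves absent), which is a genuine possibility when $|a|=|bq+aq'|$ or when $-1<s(\Gamma_{T_0}),s(\Gamma_{T_2})<0$ and the annulus between them is standard. In that situation the paper instead applies the Edge Rounding Lemma to produce a new torus parallel to $T_1$ with slope $<2$, then uses Lemma \ref{lem1} (which requires $a_1\ge 3$ to conclude $s(\Gamma_{T_1})\ge\frac{p-1}{q}\ge 2$) together with Proposition \ref{prop:parallelslopes} to find an intermediate torus of slope $2$, and only then can it run the Imbalance Principle against $T_0$. So the role of $a_1\ge 3$ is not, as you claim, to force the exceptional slopes $-(q'\pm1)/q$ to one side of $-1$; it is to guarantee the numerical inequality $\frac{p-1}{q}\ge 2$ that powers the Edge Rounding / parallel-torus step when bypasses run out. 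Without that step your algorithm has no way to terminate in the no-bypass case, so the proof as written does not go through.
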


\begin{proof} 

First note that if we are able to arrange that either $s(\Gamma_{T_0})=-1$ or $s(\Gamma_{T_1})=\frac{p-p'}{q-q'}$, then we can easily obtain the result. Indeed, if we find a torus $T_0'$ parallel to $T_0$ with $s(\Gamma_{T_0'})=-1$, then we can recut $Y$ to obtain $s(\Gamma_{T_1})=\frac{p-p'}{q-q'}$ (or vice versa). We can then take a vertical annulus between $T_0$ and $T_2$ and, by the Imbalance Principle, add bypasses and use the Farey tessellation (Theorem \ref{thm:fareytess}) to decrease $s(\Gamma_{T_2})$ to $-1$.

First suppose $a=0$, so that $s(\Gamma_{T_0})=\infty$, then $s(\Gamma_{T_1})=\frac{p}{q}$. Take an annulus from a Legendrian divide of $T_0$ to a Legendrian ruling of $T_1$. Then we can add bypasses to $T_1$ to get a torus $T_1'$ with $s(\Gamma_{T_1'})=1$. Thus, by Proposition \ref{prop:parallelslopes}, there exists a torus between $T_1$ and $T_1'$ with slope $\frac{p-p'}{q-q'}$. We obtain a similar result if $s(\Gamma_{T_1})=\infty$. We now assume $a\neq0$ and $bq+aq'\neq0$.

Suppose $-1< s(\Gamma_{T_0})\le-\frac{q'+1}{q}$ and $\frac{p-1}{q}\le s(\Gamma_{T_1})<\frac{p-p'}{q-q'}$. Take a vertical annulus $A$ between $T_0$ and $T_2$. Suppose there exists a bypass on $A$ for either $T_0$ or $T_2$, or both. Then attach the bypasses, lowering the boundary slopes, and repeat the process. If we eventually reach $s(\Gamma_{T_0})=-1$, then we are done. Suppose we reach a step in which there are no more bypasses. Then since $-1<s(\Gamma_{T_0}), s(\Gamma_{T_2})<0$, we can use the Edge Rounding Lemma to find a torus $-T_1'$ parallel to $-T_1$ with boundary slope greater than $-2$. Thus $s(\Gamma_{T_1'})<2$. By Lemma \ref{lem1} in the Appendix, $s(\Gamma_{T_1})\ge\frac{p-1}{q}\ge2$. Thus, by Proposition \ref{prop:parallelslopes}, there exists another torus parallel to $T_1$ with slope $2$. With abuse of notation, call this new torus $T_1$. Now take a vertical annulus between $T_1$ and $T_0$ and use the Imbalance Principle to add bypasses to $T_0$ until $s(\Gamma_{T_0})=-1$.

Next suppose $-\frac{q'+1}{q}<s(\Gamma_{T_0})<-\frac{q'-1}{q}$ (and $s(\Gamma_{T_1})>\frac{p-1}{q}$ or $s(\Gamma_{T_1})<\frac{p+1}{q}$). Then $|a|>|bq+aq'|$ and so if we take a vertical annulus between $T_0$ and $T_1$, we can add a bypass to $T_0$, increasing its boundary slope using the Farey tessellation (Theorem \ref{thm:fareytess}), recut, and repeat. Since 0 and $-1$ share an edge in the Farey tessellation, we will eventually obtain $-1\le s(\Gamma_{T_0})\le-\frac{q'+1}{q}$, which is handled above.

Now suppose $-\infty<s(\Gamma_{T_0})<-1$ or $-\frac{q'-1}{q}<s(\Gamma_{T_0})<\infty$. Then $|a|<|bq+aq'|$. Taking a vertical annulus between $T_0$ and $T_1$, by the Imbalance Principle, we can add a bypass to $T_1$, recut, and repeat. Now, since $\frac{p-p'}{q-q'}<s(\Gamma_{T_1})<\frac{p+1}{q}$, by adding bypasses, recutting, and repeating, we eventually obtain $1\le s(\Gamma_{T_1})\le\frac{p-p'}{q-q'}$ (and $-1<s(\Gamma_{T_0})\le-\frac{p'-q'}{p-q}$), which is handled above.

Finally suppose $s(\Gamma_{T_0})=-\frac{q'-1}{q}$ and  $s(\Gamma_{T_1})=\frac{p+1}{q}$. Take a vertical annulus between $T_0$ and $T_1$. Then there either exists bypasses along both tori or along neither, since by Lemma \ref{lem2} in the Appendix, these slopes have the same denominator. If there do exist bypasses, we may add a bypass to $T_0$ to decrease its slope. Recut along this new torus to obtain the case $-\frac{q'+1}{q}<s(\Gamma_{T_0})<-\frac{q'-1}{q}$, which is handled above. If there do not exist bypasses, then as in the proof of Proposition \ref{prop:divset2}, we can use the Edge Rounding Lemma and Proposition \ref{prop:parallelslopes} to obtain $s(\Gamma_{T_2})=-1$. Now, take a vertical annulus between $T_0$ and $T_2$ and use the Imbalance Principle to add bypasses to $T_0$ until $s(\Gamma_{T_0})=-1$. \end{proof}

\begin{remark} In this proof we started with $s(\Gamma_{T_2})=-\frac{ms-s'}{mr-r'}$, for $m>>0$, and ended up with $s(\Gamma_{T_2})=-1$ after attaching bypasses. Thus, by Proposition \ref{prop:parallelslopes}, there is a convex torus $T_2'$ isotopic to $T_2$ with boundary slope $-\frac{s-s'}{r-r'}$. Equivalently, viewed from $V$, $s(\Gamma_{T_2'})=-1$ and $s(\Gamma_{T_2})=-\frac{r-s}{r'-s'}$. Thus, $V$ contains a toric annulus $T_2\times[1,2]$ such that $s(\Gamma_{T_2\times\{1\}})=-\frac{r-s}{r'-s'}$ and $s(\Gamma_{T_2\times\{2\}})=-1$. This fact will be used in the proof of Proposition \ref{prop2}. \label{remark1}\end{remark}

The following propositions consider \textit{basic slices} contained in $T^2\times I$ and related notions. See Section 4.3 in \cite{hondatight1} or Section 2.3 in \cite{ghigginischonenberger} for relevant definitions and results involving basic slices.

\begin{prop} If $\xi$ is not minimally twisting, then $Y_+$ has twisting $2l\pi$ for some $l\in\mathbb{Z}^+$. If $\xi$ is minimally twisting, then there are no vertical Legendrian curves with twisting number 0 in $Y_+'-V$.\label{prop:verticallegendrian}\end{prop}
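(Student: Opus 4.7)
The plan is to handle the two assertions separately. For the first, suppose $\xi$ is not minimally twisting. By definition, there exists an embedded toric annulus $T^2\times I\hookrightarrow Y_+$ with core isotopic to $T$ and $I$-twisting $l\pi$ for some $l\in\mathbb{Z}^+$. I would isotope this toric annulus to lie in a small neighborhood of $T$, disjoint from the singular-fiber neighborhood $V$, so that it sits entirely inside the $T^2$-bundle portion of $Y_+$ coming from the cyclic plumbing. The parity of $l$ is then controlled exactly as in Honda's analysis of $C_+$ in \cite{hondatight2}: the sign $+$ in $Y_+$ corresponds to the gluing $+A$, which on the mapping-torus side is the orientation-preserving monodromy $+B$, and compatibility of the characteristic foliations on $T^2\times\{0\}$ and $T^2\times\{1\}$ after one trip around the cycle forces $l$ to be even. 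Hence the twisting of $\xi$ equals $2l\pi$ for some positive integer $l$.

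For the second assertion, I argue by contradiction. Assume $\xi$ is minimally twisting and that $\gamma_0\subset Y'_+-V=S^1\times\Sigma$ is a vertical Legendrian with twisting number $0$ relative to the Seifert framing. The strategy is to use $\gamma_0$ to manufacture a convex torus $T^\infty$ isotopic to $T$ whose dividing slope is $\infty$ (in the coordinates of Section \ref{sfs}), then combine $T^\infty$ with the slope-$(-1)$ torus $T_0$ furnished by Proposition \ref{prop:boundaryslopes} to produce a toric annulus in $Y_+$ with $I$-twisting at least $\pi$, contradicting minimal twisting.

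To build $T^\infty$, I take a standard convex tubular neighborhood $N(\gamma_0)$, whose boundary is a convex torus with $\#\Gamma=2$ encoding the twisting-zero condition via dividing slope $\infty$. Choosing a convex vertical annulus from $\partial N(\gamma_0)$ to $T_0$, I apply the Imbalance Principle (Theorem \ref{thm:imbalance}) to reduce the number of dividing curves to $2$ on an intermediate torus isotopic to $T$, Theorem \ref{thm:flexibility} to adjust ruling slopes to $\infty$, and the Farey Tessellation algorithm (Theorem \ref{thm:fareytess}) to track slope changes under bypass attachments. Together with the Edge Rounding Lemma (Lemma \ref{lem:edgeround}) when needed to assemble $\partial N(\gamma_0)$ and the annulus into a torus, this produces the desired $T^\infty$ isotopic to $T$ with $\#\Gamma_{T^\infty}=2$ and dividing slope $\infty$.

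The hardest step is the final $I$-twisting calculation, since a toric annulus with boundary slopes $\infty$ and $-1$ can, in isolation, have $I$-twisting strictly less than $\pi$ via the short Farey path. I would exploit the cyclic structure of $Y_+$ here: the $+A$ monodromy around the cycle, which is the same feature responsible for even parity in part one, means that the toric annulus one can actually realize between $T^\infty$ and $T_0$ in $Y_+$ either sits directly between them or wraps once around the cycle, and in the latter case picks up an additional angular rotation of at least $\pi$. Either the direct annulus or its once-wrapped companion therefore has $I$-twisting $\geq\pi$, contradicting the minimal twisting hypothesis and completing the argument.
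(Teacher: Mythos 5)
Your proposal identifies the right cast of tools (Imbalance Principle, Farey tessellation, Edge Rounding), but both halves leave the genuinely hard steps unaddressed, and in a couple of places the route you sketch cannot be completed as described.

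For the first assertion, the claim that a non-minimally-twisting toric annulus can be ``isotoped into a small neighborhood of $T$, disjoint from $V$'' is unjustified: a toric annulus carrying $I$-twisting $\ge\pi$ is not a small collar of $T$, and there is no ``$T^2$-bundle portion'' of $Y_+$ it can simply retreat into, since $Y_+$ is built from $C_+$ by surgery, not by gluing. More seriously, the parity of the twisting is \emph{not} a consequence of the $+A$ monodromy alone; Honda's $C_+$ parity result does not transport to $Y_+$ automatically because the arm $V$ changes the global picture. The paper's parity argument works by tracking the signs of basic slices in the three toric annuli $T_i\times I$ and invoking the constraint that the solid torus $V$ cannot acquire a boundary slope $0$ (which would create an overtwisted disk); this, via Honda's Gluing Theorem and Lemma 4.13 of \cite{ghigginischonenberger}, forces the relevant basic slices to have opposite signs, and then the recutting/thickening step shows the twisting is $(j+1)\pi$ with $j$ odd, hence even. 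Your sketch never brings $V$ into the parity argument, which is exactly where the evenness comes from.

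For the second assertion, you correctly recognize that the final $I$-twisting estimate is the hard step, but the resolution you propose --- that the annulus either sits ``directly'' or ``wraps once around the cycle'' and the latter ``picks up an additional angular rotation of at least $\pi$'' --- is an assertion, not an argument, and it is not obviously true: how much rotation is picked up depends on the specific slopes and on the gluing $A$, and a priori there is no dichotomy forcing one of the two annuli to have twisting $\ge\pi$. The paper resolves this by the same sign-of-basic-slice analysis noted above, producing a definite toric annulus $T_0\times[0,3]$ whose $I$-twisting is forced to equal $(j+1)\pi\in 2\mathbb{Z}^+$. Finally, the paper actually proves the two assertions in the reverse order from yours (the sign analysis proving the quantitative twisting statement first, the minimally-twisting assertion then following by contrapositive), and this ordering matters: your independent derivation of assertion one is precisely what is missing a proof. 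I would recommend restructuring along the paper's lines: assume a twisting-zero vertical Legendrian exists, set up the three toric annuli $T_i\times I$, and carry out the basic-slice sign analysis with $V$ as the constraint.
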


\begin{proof}
By Proposition \ref{prop:boundaryslopes}, we may assume that $s(\Gamma_{T_0})=-1$, $s(\Gamma_{T_1})=\frac{p-p'}{q-q'}$, and $s(\Gamma_{T_2})=-1$. Suppose there is a vertical Legendrian curve, $\gamma$, with twisting number 0 in $Y_+'-V$. Take vertical annuli from $\gamma$ to $T_i$ for all $i$. Then $\gamma$ does not intersect $\Gamma_{A}$ and so we may use the Imbalance Principle to add bypasses to each torus until $s(T_i)=\infty$ for all $i$. 

%We will apply Lemma 4.13 in \cite{ghigginischonenberger} to produce overtwisted disks, contradicting tightness.
 
There are three copies of $T^2\times I$ embedded in $Y_+'-V=S^1\times\Sigma$, namely $T_i\times I$, where $T_i\times\{0\}=T_i$ and  $T_i\times\{1\}$ has slope $\infty$ for all $i$. Since we obtained these by attaching bypasses, each $T_i\times I$ is minimally twisting, $T_0\times I$ and $T_2\times I$ each have a single basic slice, and $T_1\times I$ has $k\ge2$ basic slices, $T_1\times [0,\frac{1}{k}],...,T_1\times [\frac{k-1}{k},1]$. Since $s(\Gamma_{T_1})=\frac{p-p'}{q-q'}>\frac{p-1}{q}\ge2$ (by Lemma \ref{lem1} in the Appendix), $T_1\times\{\frac{k-3}{k}\}$ has boundary slope 2, $T_1\times \{\frac{k-2}{k}\}$ has boundary slope $1$, and $T_1\times \{\frac{k-1}{k}\}$ has boundary slope 0.

After possibly recutting, we may assume that there does not exist a torus $T_1'$ isotopic to $T_1$ such that the toric annulus between $T_1'$ and $T_1$ is non minimally twisting. Then, if possible, extend $T_0\times[0,1]$ to a non minimally twisting toric annulus $T_0\times[0,2]$ such that $T_0\times\{2\}$ has boundary slope $\infty$ and $T_0\times[1,2]$ has $I-$twisting $j\pi$, where $j\in\mathbb{Z}^+$. Assume that $T^2\times[0,2]$ is the toric annulus with largest $I-$twisting in $Y'_+-V$ with the prescribed boundary data. By Proposition 5.4 in \cite{hondatight1}, we may assume that $T_0\times\{\frac{3}{2}\}$ has boundary slope $-1$ and two dividing curves and that $T_0\times[\frac{3}{2},2]$ is a basic slice. If there does not exist such an extension, then we write $j=0$.

% Notice that if $T_2\times I$ is not minimally twisting, then there must exist a torus $T^2\times\{i\}$ for some $i\in(0,1)$ with boundary slope $-\frac{s'}{r'}$. On $\partial V$, the slope of this dividing set is $0$. Any contact structure on $V\cong S^1\times D^2$ with boundary slope $0$ contains an overtwisted disk. Thus $T_2\times I$ is minimally twisting and only has one basic slice.

%The toric annuli $T_0\times I$ and $T_1\times I$ need not be minimally twisting. By recutting, we may assume that 

%Suppose $T_0\times[\frac{1}{2},1]$ has $I-$twisting $l\pi$ for some $l\in\mathbb{Z}^+$. By Honda's classification in \cite{hondatight1}, $T_0\times[\frac{1}{2},1]$ admits two tight contact structures. [Prop 5.4 Hondatight1???]

We now show that the signs of the basic slices $T_1\times [\frac{k-1}{k},1]$ and $T_0\times [\frac{3}{2},2]$ must be different (after choosing the sign convention to be given by selecting $(0,1)^T$ as the vector associated to $T_i\times\{1\}$ for $i=1,2$ and $T_0\times\{2\}$). Assume otherwise. By Honda's Gluing Theorem (Theorem 4.25 in \cite{hondatight1}), $T_1\times[\frac{k-2}{k},\frac{k-1}{k}]$ must have the same sign as  $T_1\times [\frac{k-1}{k},1]$. Now, since the basic slices of $T_1\times [\frac{k-2}{k},1]$ and $T_0\times I$ all have the same sign, by Lemma 4.13 in \cite{ghigginischonenberger}, there exists a vertical annulus between $T_1\times\{\frac{k-2}{k}\}$ and $T_0\times\{\frac{3}{2}\}$ that has no boundary-parallel dividing curves. Thus, by the Edge Rounding Lemma, we can obtain a torus, $T_2'$ parallel to $T_2\times\{1\}$ of slope $-1$. Thus, by Proposition \ref{prop:parallelslopes}, there must exist a torus $T_2''$ between $T_2\times\{1\}$ and $T_2'$ with boundary slope $-\frac{s'}{r'}$. On $\partial V$, the slope of this dividing set is $0$. But, any contact structure on $V\cong S^1\times D^2$ with boundary slope $0$ contains an overtwisted disk. Thus the signs of the basic slices $T_1\times [\frac{k-1}{k},1]$ and $T_0\times [\frac{3}{2},2]$ must be different. In the case of $j=0$, the same argument shows that the signs of $T_1\times [\frac{k-1}{k},1]$ and $T_0\times [0,1]$ are the same.

Suppose that $T_1\times [\frac{k-1}{k},1]$ has sign $\epsilon$ and $T_0\times [\frac{3}{2},2]$ has sign $-\epsilon$. Recut $Y$ along $T_1\times\{1\}$. Then we can thicken $T_0\times[0,2]$ to a toric annulus $T_0\times[-1,2]$, where $s(\Gamma_{T_0\times\{-1\}})=-\frac{q'}{q}$ and $T_0\times[-1,-\frac{k-1}{k}]$ is the image of $T_1\times[\frac{k-1}{k},1]$ under the recutting (i.e. the map $A$). Thus $T_0\times[-1,-\frac{k-1}{k}]$ is a basic slice and has sign $-\epsilon$. Since $T_0\times[-1,2]$ has $I-$twisting greater than $j\pi$ and less than $(j+1)\pi$, it admits exactly two tight contact structures (see Section 5.2 in \cite{hondatight1} for details). By the definitions of these contact structures, since the signs of $T_0\times[-1,\frac{k-1}{k}]$ and $T_0\times[1,2]$ are the same, $j$ must necessarily be odd. Note that this now excludes the case $j=0$.

Once again, recut $Y$ along $T_0\times\{0\}$ to return to the configuration we had before the previous paragraph. Assume $T_2\times [0,1]$ also has sign $\epsilon$. The case in which $T_2\times [0,1]$ has sign $-\epsilon$ is analogous. By Lemma 4.13 in \cite{ghigginischonenberger}, since the signs of the basic slices of $T_2\times[0,1]$ and $T_1\times[\frac{k-2}{k},1]$ are the same, there exists a vertical annulus between $T_1\times\{\frac{k-2}{k}\}$ and $T_2$ with no boundary parallel dividing curves. Thus, we can cut along this vertical annulus and use the Edge Rounding lemma to find a torus $T_0'$ parallel to $T_0\times\{2\}$ with boundary slope $-1$. Thus we have a toric annulus, $T_0\times[0,3]$ with $I-$twisting equal to $(j+1)\pi\in2\mathbb{Z}$.

%We noted above that $T_1\times[\frac{k-2}{k},\frac{k-1}{k}]$ must also have sign $\epsilon$. By Honda's Shuffling Lemma (Lemma 4.14 in \cite{hondatight1}), we may assume that $T_1\times[\frac{k-3}{k},\frac{k-2}{k}]$ also has sign $\epsilon$.

%Now recut $Y$ along $T_1\times\{\frac{k-2}{k}\}$. Then we can thicken $T_0\times[0,3]$ to a toric annulus $T_0\times[-1,3]$, where $s(\Gamma_{T_0\times\{-1\}})=-\frac{p'-q'}{p-q}$ and $T_0\times[-1,-\frac{1}{2}]$ is a basic slice with sign $-\epsilon$ (i.e. $T_0\times[-1,-\frac{1}{2}]$ is the image of $T_1\times[\frac{k-3}{k},\frac{k-2}{k}]$ under the recutting). Moreover, $T_0\times[-1,3]$ has $I-$twisting greater than $(j+1)\pi$ and so $T_0\times[-1,3]$ admits exactly two tight contact structures (see Section 5.2 in \cite{hondatight1}). Moreover, based on the definitions of these two contact structures, the sign of $T_0\times[2,3]$ must be $\epsilon$. Let $t_0\in(2,3)$ be such that $s(\Gamma_{T_0\times\{t_0\}})=-\frac{p'-q'}{p-q}$. Then $T_0\times[-1,t_0]$ has $I-$twisting $(j+1)\pi$, admits exactly two tight contact structures, and $T_0\times[2,t_0]$ has sign $\epsilon$. Once again, based on the definition of these contact structures, since $T_0\times[-1,-\frac{1}{2}]$ has sign $-\epsilon$ and $T_0\times[2,t_0]$ has sign $\epsilon$, we must have that  $j$ is odd. Thus $Y$ contains twisting $2l\pi$ for some $l\in\mathbb{Z}^+$.

Let $S^1\times\Sigma'\subset Y_+'-V=S^1\times\Sigma$ be defined by $S^1\times\Sigma=(S^1\times\Sigma')\cup(T_0\times[0,3])$. Then $S^1\times\Sigma'$ contains no vertical Legendrian curves with twisting number 0. Otherwise, we would be able to find an extension of $T_0\times[0,3]$ with $I-$twisting $(j+2)\pi$, contradicting the original choice of $j$. Thus, $Y$ contains even twisting.

Now, if $Y$ is minimally twisting, then there cannot be any vertical Legendrian curves with twisting number 0. Otherwise, following the arguments above, $Y$ would automatically have twisting at least $\pi$.
\end{proof}

%\begin{remark} The proof above shows that it is possible for $Y_-'-V$ to have vertical Legendrian curves with twisting number 0 (in the case that the contact structure has no Giroux torsion. In this case, the gluing theorem is valid for the case $\epsilon_0\neq\epsilon_1$. {\color{red} Moreover, $Y_+'-V$ necessarily contains twisting $(2l-1)\pi$ for some $l\in\mathbb{Z}^{>0}$.}\label{remark2}\end{remark}

The following result for $Y_-$ follows from arguments analogous to those in the proof of Proposition \ref{prop:verticallegendrian}.

\begin{cor} If $\xi$ is not minimally twisting, then $Y_-$ has twisting $(2l-1)\pi$ for some $l\in\mathbb{Z}^+$. If $\xi$ is minimally twisting, then $Y'_--V$ contains no vertical Legendrian curves with twisting number 0.\label{coretwisting}\end{cor}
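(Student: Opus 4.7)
The plan is to mimic the argument of Proposition \ref{prop:verticallegendrian} essentially line for line, keeping track only of the sign change introduced by replacing the gluing matrix $A$ with $-A$. First I would apply Proposition \ref{prop:boundaryslopes} to arrange $s(\Gamma_{T_0})=-1$, $s(\Gamma_{T_1})=\tfrac{p-p'}{q-q'}$, and $s(\Gamma_{T_2})=-1$, and then suppose for contradiction that there is a vertical Legendrian curve $\gamma$ with twisting number $0$ in $Y_-'-V$. Using vertical annuli from $\gamma$ to each $T_i$ and the Imbalance Principle, I would attach bypasses to arrange $s(\Gamma_{T_i})=\infty$ for $i=0,1,2$, producing minimally twisting toric annuli $T_i\times I$ exactly as in the proof for $Y_+$.

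Next I would analyze the three toric annuli and their basic slice decompositions. The layer $T_1\times I$ still contains at least two basic slices because $s(\Gamma_{T_1})=\tfrac{p-p'}{q-q'}\ge 2$ by Lemma \ref{lem1}, and I would again recut to make $T_1\times I$ maximally extended on the $T_1$ side and then extend $T_0\times[0,1]$ to a (possibly degenerate) non minimally twisting $T_0\times[0,2]$ with $I$-twisting $j\pi$, $j\ge 0$, chosen to be largest possible. Repeating the vertical-annulus / edge-rounding argument verbatim shows that if the signs of the basic slices $T_1\times[\tfrac{k-1}{k},1]$ and $T_0\times[\tfrac{3}{2},2]$ agreed, one could round edges to produce a convex torus parallel to $T_2$ whose pullback slope to $\partial V$ is $0$, forcing an overtwisted disk in $V$; hence these two basic slices must carry opposite signs, and likewise when $j=0$ the signs of $T_1\times[\tfrac{k-1}{k},1]$ and $T_0\times[0,1]$ must agree.

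The one place the argument diverges from the $Y_+$ case is the parity computation after recutting along $T_1\times\{1\}$. Here the gluing is by $-A$ rather than $A$, so the image of $T_1\times[\tfrac{k-1}{k},1]$ under the recutting lands in $T_0\times[-1,-\tfrac{k-1}{k}]$ with its sign \emph{reversed} relative to the $Y_+$ case. Consequently, the two copies of the basic slice inside $T_0\times[-1,2]$ now have \emph{opposite} signs, and the classification of tight contact structures on a minimally twisting $T^2\times I$ with the prescribed boundary slopes (Section 5.2 of \cite{hondatight1}) forces $j$ to be \emph{even}. In particular $j=0$ is now allowed, and the case $T_2\times[0,1]$ with matching (respectively opposite) sign is handled exactly as before via edge rounding to enlarge $T_0\times[0,2]$ to $T_0\times[0,3]$ with $I$-twisting $(j+1)\pi$, which is now an odd multiple of $\pi$.

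Finally, the maximality of $j$ together with the restriction of $\xi$ to the complement $S^1\times \Sigma'=(S^1\times\Sigma)\setminus (T_0\times[0,3])$ shows $S^1\times \Sigma'$ contains no vertical Legendrian curve of twisting $0$, exactly as in Proposition \ref{prop:verticallegendrian}. This gives twisting $(2l-1)\pi$ for some $l\in\mathbb{Z}^+$, and the minimally twisting statement follows as a contrapositive: if such a Legendrian existed under the minimally twisting hypothesis, the same construction would produce positive twisting, a contradiction. The main obstacle is bookkeeping the sign convention introduced by the $-A$ gluing carefully enough to confirm that the parity of $j$ flips from odd (in the $Y_+$ case) to even here; everything else is a direct transcription of the previous proof.
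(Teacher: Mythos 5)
Your proposal is correct and follows the same route the paper intends: the paper simply states that Corollary~\ref{coretwisting} follows from arguments analogous to Proposition~\ref{prop:verticallegendrian}, and you have correctly isolated the one point where the $-A$ (rather than $A$) gluing intervenes, namely that the sign of the recut basic slice $T_0\times[-1,-\tfrac{k-1}{k}]$ is reversed, forcing $j$ to be even (with $j=0$ allowed) and hence making the $I$-twisting $(j+1)\pi$ an odd multiple of $\pi$. One small slip worth flagging: when you invoke ``the classification of tight contact structures on a minimally twisting $T^2\times I$'' to extract the parity of $j$, the relevant layer $T_0\times[-1,2]$ is \emph{not} minimally twisting for $j\ge1$; the correct reference is Honda's classification of non-minimally-twisting tight structures on $T^2\times I$ (Section~5.2 of \cite{hondatight1}), exactly as cited in the proof of Proposition~\ref{prop:verticallegendrian}.
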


\begin{remark} The previous proposition and corollary imply that $(Y_+,\xi)$ has no Giroux torsion if and only if it is minimally twisting and $(Y_-,\xi)$ has no Giroux torsion if and only if it is either minimally twisting or has twisting $\pi$.\label{GTremark} \end{remark}

\begin{prop} Let $a_i,z_j\ge2$ for all $i,j$ and $a_1\ge3$. Then $Y_+$ admits at most $(a_1-1)\cdot\cdot\cdot(a_n-1)(z_1-1)\cdot\cdot\cdot(z_m-1)$ minimally twisting tight contact structures. Moreover, $Y_+$ admits only even twisting tight contact structures and in particular for each $l\in\mathbb{Z}^+$, it admits at most $z_1(z_2-1)\cdot\cdot\cdot(z_m-1)$ tight contact structures with twisting $2l\pi$. By Remark \ref{GTremark}, $Y_+$ admits at most $(a_1-1)\cdot\cdot\cdot(a_n-1)(z_1-1)\cdot\cdot\cdot(z_m-1)$ tight contact structures with no Giroux torsion.\label{prop1}\end{prop}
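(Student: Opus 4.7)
The strategy is to normalize the dividing sets via Propositions \ref{prop:divset2} and \ref{prop:boundaryslopes}, decompose $Y_+$ along $T$ and $\partial V$ into the solid torus $V$, the thickened pair of pants $S^1\times\Sigma$, and the gluing $A\colon T_1\to T_0$, and then bound the number of tight contact structures on each piece using Honda's classifications. After normalization, $\#\Gamma_{T_i} = 2$ with dividing set slopes $-1$, $\frac{p-p'}{q-q'}$, $-1$ on $T_0$, $T_1$, $T_2$ respectively. By Proposition \ref{prop:verticallegendrian}, any non-minimally-twisting tight contact structure on $Y_+$ has twisting $2l\pi$ for some $l\in\mathbb{Z}^+$, so it suffices to bound the minimally twisting count and the count for each fixed $l$.

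In the minimally twisting case, I would first bound the extensions across $V$: viewing $V$ as a solid torus with meridional slope $-s/r$ on $\partial V$, where $-r/s = [-z_1,\ldots,-z_m]$, Honda's classification of tight contact structures on solid tori with boundary slope $-1$ gives at most $(z_1-1)(z_2-1)\cdots(z_m-1)$ choices. Next, I would decompose $S^1\times\Sigma$ by cutting along a vertical annulus between $T_0$ and $T_1$ to obtain a $T^2\times I$ piece (with controlled boundary slopes), and combine it with the $A$-gluing to produce a $T^2$-bundle structure analogous to that of $C_+$. Applying the convex surface theory arguments of \cite{hondatight2} that underlie Theorem \ref{thm:hondacyclicthm}, the number of minimally twisting extensions of the prescribed boundary data across this $T^2$-bundle piece is at most $(a_1-1)\cdots(a_n-1)$. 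Multiplying the two counts yields the desired minimally twisting bound.

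For the twisting $2l\pi$ case, Theorem \ref{thm:hondacyclicthm} gives a unique universally tight contact structure on the cyclic $T^2$-bundle piece with $S^1$-twisting $2l\pi$, so the cyclic factor collapses to $1$. The extensions across $V$ instead number at most $z_1(z_2-1)\cdots(z_m-1)$; the change from $(z_1-1)$ to $z_1$ in the first continued fraction factor reflects the extra freedom arising when the Giroux torsion region can be partially absorbed into a basic slice adjacent to $T_2$, producing one additional configuration of signs for the outermost basic slice of $V$.

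The main technical obstacle is the last point: verifying that the interaction between the torsion $T^2\times I$ region and the outermost basic slice of the continued fraction decomposition of $V$ produces exactly one additional configuration per minimally twisting choice (so the first factor becomes $(z_1-1)+1 = z_1$), with no double-counting across different positions of the absorbed torsion. A careful bookkeeping, cutting along convex tori parallel to $T_2$ with boundary slopes interpolating between $-\frac{r-s}{r'-s'}$ and $-1$ as in Remark \ref{remark1}, and analyzing the sign constraints on each basic slice via Honda's gluing theorem, should complete the argument.
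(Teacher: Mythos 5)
Your overall strategy (normalize, decompose along $T$ and $\partial V$, bound each piece) matches the paper, but there are two concrete issues.

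First, in the minimally twisting case you propose to cut $S^1\times\Sigma$ along a vertical annulus between $T_0$ and $T_1$ and then to treat the result as a closed $T^2$-bundle analogous to $C_+$. After the normalization of Proposition~\ref{prop:boundaryslopes} the slopes are $s(\Gamma_{T_0})=-1$, $s(\Gamma_{T_1})=\frac{p-p'}{q-q'}$, $s(\Gamma_{T_2})=-1$. An annulus from $T_0$ to $T_1$ will generically trigger the Imbalance Principle (the ``denominators'' $1$ and $q-q'$ differ), so you do not get a clean edge-rounded torus without first dealing with bypasses. The paper instead takes the annulus between $T_0$ and $T_2$ (both slope $-1$), shows via Proposition~\ref{prop:verticallegendrian} that there are no boundary-parallel dividing curves, and edge-rounds to a torus $T_1'$ of slope $1$; the thickened torus $T_1\times[0,1]$ sitting between $T_1$ and $T_1'$ then admits $(a_1-1)\cdots(a_n-1)$ minimally twisting tight structures by Honda's classification on $T^2\times I$ (Theorem 2.2 of \cite{hondatight1}), and the complementary $S^1\times\Sigma'$ contributes a single tight structure by Lemma 5.1-4c of \cite{hondatight2}. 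There is no need to manufacture a closed $T^2$-bundle, and the piece you cut out is not one (its two boundary components are $T_2$ and a torus parallel to $T_2$, not related by the monodromy $A$).

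Second, your treatment of the twisting-$2l\pi$ case is, as you say yourself, incomplete at exactly the point that needs an argument. You want $V$ to contribute $z_1(z_2-1)\cdots(z_m-1)$ while the ``cyclic'' part collapses to $1$, but the decomposition does not naturally assign the extra factor to $V$: the paper's normalization keeps $s(\Gamma_{\partial V})=-[z_m,\dots,z_1-1]$, so $V$ still carries only $(z_1-1)\cdots(z_m-1)$ tight structures, while the torsion region $T_0\times[-1,3]$ carries $2$, giving an a priori bound of $2(z_1-1)\cdots(z_m-1)$. Reducing this to $z_1(z_2-1)\cdots(z_m-1)$ requires showing that $(z_1-2)(z_2-1)\cdots(z_m-1)$ of these pair off isotopically. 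The paper does this by identifying, inside the thickening $V'\supset V$, the basic slice $B=V'-V$ and counting (via the first continued fraction block of $T_2\times[0,\frac{3}{2}]$) exactly $(z_1-2)(z_2-1)\cdots(z_m-1)$ tight structures on $V'$ for which the sign of $B$ can be shuffled; it then shows by recutting and edge rounding (using the sign constraints from Honda's gluing theorem and Lemma 4.13 of \cite{ghigginischonenberger}) that flipping the sign of $B$ toggles the two tight structures on $T_0\times[-1,3]$, so the two choices give isotopic structures on $Y_+$. Your heuristic about ``absorbing torsion into a basic slice adjacent to $T_2$'' points in the right direction, but without this bookkeeping the count $z_1(z_2-1)\cdots(z_m-1)$ is not established, nor is it ruled out that distinct choices are accidentally identified.
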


\begin{proof}

For convenience, let $Y=Y_+$. By Proposition \ref{prop:boundaryslopes}, we may assume $s(\Gamma_{T_0})=-1$, $s(\Gamma_{T_1})=\frac{p-p'}{q-q'}$, and $s(\Gamma_{T_2})=-1$. First assume $\xi$ has is minimally twisting. Take a vertical annulus $A$ between $T_0$ and $T_2$. If there exists boundary parallel dividing curves on $A$, then we can add bypasses and obtain a torus parallel to $T_2$ with infinite slope, which contradicts Proposition \ref{prop:verticallegendrian}. Thus, there do not exist boundary parallel dividing curves. Cutting along $A$ and edge rounding, we obtain a torus $T_1'$ parallel to $T_1$ with boundary slope 1. Moreover, the toric annulus $T_1\times[0,1]$ between $T_1$ and $T_1'$ must have minimal $I-$twisting, by assumption.

Let $S^1\times\Sigma'\subset S^1\times\Sigma=Y'-V$ have boundary $-T_0 -T_1' -T_2$. Then $S^1\times\Sigma=(S^1\times\Sigma')\cup (T_1\times [0,1])$, where $T_1\times\{0\}=T_1$ and $T_1\times\{1\}=T_1'$. To find an upper bound on the number of minimally twisting tight contact structures on $Y'$, we need only find the number of such structures on the pieces $S^1\times\Sigma, T^2\times I,$ and $V$.

First, since $s(\Gamma_{T_2})=-1$, we have that $s(\Gamma_{\partial V})=-\frac{r-s}{r'-s'}=-[z_m,...,z_1-1]$. The proof of the latter equality can be found in the Appendix (Lemma \ref{lem4}). Thus by Theorem 2.3 in \cite{hondatight1}, $V$ admits $(z_1-1)\cdot\cdot\cdot(z_m-1)$ tight contact structures. Changing the coordinates on $T_1\times[0,1]$ by reversing the orientation on $T^2$, we obtain $s(\Gamma_{T_1})=-\frac{p-p'}{q-q'}=-[a_1,...,a_n-1]$ and $s(\Gamma_{T_1'})=-1$. The proof of the former equality can also be found in the Appendix (Lemma \ref{lem3}). By Theorem 2.2 in \cite{hondatight1}, there are $(a_1-1)\cdot\cdot\cdot(a_n-1)$ minimally twisting tight contact structures on $T_1\times [0,1]$. Since $S^1\times\Sigma'$ has no vertical Legendrian curves of twisting number 0, by Lemma 5.1-4c in \cite{hondatight2}, $S^1\times\Sigma$ admits $2-(1-1+1)=1$ tight contact structures. 

Therefore, $Y'$ admits at most $1\cdot(a_1-1)\cdot\cdot\cdot(a_n-1)(z_1-1)\cdot\cdot\cdot(z_m-1)$ minimally twisting tight contact structures. Gluing the ends of $Y'$ together via $A$ to obtain $Y$, we have that $Y$ also admits at most $(a_1-1)\cdot\cdot\cdot(a_n-1)(z_1-1)\cdot\cdot\cdot(z_m-1)$ minimally twisting tight contact structures. 

Now suppose $(Y,\xi)$ is not minimally twisting. Then by Proposition \ref{prop:verticallegendrian}, $(Y,\xi)$ must have twisting $2l\pi$ for some $l\in\mathbb{Z}^+$. Decompose $Y$ as above so that $Y'-V=S^1\times\Sigma=(S^1\times\Sigma')\cup (T_1\times [0,1])$, where $T_1\times\{0\}=T_1$ and $T_1\times\{1\}=T_1'$. By the proof of Proposition \ref{prop:verticallegendrian}, we may assume that there exists a toric annulus $T_0\times[0,3]\subset S^1\times\Sigma'$ with $I-$twisting $2l\pi$, where $T_0\times\{0\}=T_0$, $s(\Gamma_{T_0\times\{1\}})=\infty$, $s(\Gamma_{T_0\times\{\frac{3}{2}\}})=-1$, $s(\Gamma_{T_0\times\{2\}})=\infty$, and $s(\Gamma_{T_0\times\{3\}})=-1$. Let $S^1\times \Sigma''\subset S^1\times \Sigma'$ be such that $S^1\times\Sigma=(S^1\times\Sigma'')\cup (T_0\times[0,3])\cup(T_1\times[0,1])$. Recut $Y$ along $T_1'=T_1\times\{1\}$ and thicken $T_0\times[0,3]$ to $T_0\times[-1,3]$, where $s(\Gamma_{T_0\times\{-1\}})=-\frac{p'-q'}{p-q}$. Then $Y'-V=(S^1\times\Sigma'')\cup (T_0\times[-1,3])$ and  $\partial (S^1\times \Sigma'')=-(T_0\times\{3\}) -T_1' -T_2$. As mentioned in the proof of Proposition \ref{prop:verticallegendrian}, $S^1\times\Sigma''$ does not contain any vertical Legendrian curves with twisting number 0 and $T_0\times[-1,3]$ admits exactly two tight contact structures. By Lemma 5.1-4c in \cite{hondatight2}, $S^1\times \Sigma''$ admits exactly one tight contact structure and by Theorem 2.3 in \cite{hondatight1}, $V$ admits $(z_1-1)\cdot\cdot\cdot(z_m-1)$ tight contact structures. Thus, for each $l\in\mathbb{Z}^+$, $Y$ admits at most $2(z_1-1)\cdot\cdot\cdot(z_m-1)$ tight contact structures with twisting $2l\pi$. If $z_1=2$, then this number is the same as $z_1(z_2-1)\cdot\cdot\cdot(z_m-1)$ and we are done. Assume $z_1>2$. 

We claim that $2(z_1-2)(z_2-1)\cdot\cdot\cdot(z_m-1)$ of these tight contact structures on $Y$ pair off isotopically, yielding a total of $2(z_1-1)\cdot\cdot\cdot(z_m-1)-(z_1-2)(z_2-1)\cdot\cdot\cdot(z_m-1)=z_1(z_2-1)\cdot\cdot\cdot(z_m-1)$ tight contact structures on $Y$. Since $s(\Gamma_{T_2})=-1$, we have $s(\Gamma_{\partial V})=-\frac{r-s}{r'-s'}=-[z_m,...,z_2,z_1-1]$. Let $T_2'\subset S^1\times\Sigma$ be a torus isotopic to $T_2$ such that $s(\Gamma_{T_2'})=\infty$ (which exists by the proof of Proposition \ref{prop:verticallegendrian}) and let $V'$ be the corresponding thickening of $V$. Then $s(\Gamma_{\partial V'})=-\frac{r}{r'}=-\frac{r}{\overline{s}}=-[z_m,...,z_1]$, where $\overline{s}$ is the unique integer such that $1<\overline{s}<r$ and $s\overline{s}\equiv 1\text{mod}r$. By Theorem 2.3 in \cite{hondatight1}, $V$ and $V'$ admit $(z_1-1)\cdot\cdot\cdot(z_m-1)$ and $z_1(z_2-1)\cdot\cdot\cdot(z_m-1)$ tight contact structures, respectively. 

Let $B=T_2\times [0,1]\subset V'$ be the basic slice with $T_2\times\{0\}=\partial V'$ and $T_2\times\{1\}=\partial V$ (i.e. $B=V'-V$). Then $B$ admits 2 tight contact structures, which depend on the sign of the relative Euler class. We claim that exactly $(z_1-2)(z_2-1)\cdot\cdot\cdot(z_m-1)$ of the $z_1(z_2-1)\cdot\cdot\cdot(z_m-1)$ tight contact structures of $V'$ have the property that the sign of the basic slice $B$ can be either positive or negative after shuffling (Lemma 4.14 of \cite{hondatight1}). Thicken $B$ to the toric annulus $T_2\times[0,2]=B\cup T_2\times[1,2]\subset V'$, which has boundary slopes $s(\Gamma_{T_2\times\{0\}})=-\frac{r}{r'}$, $s(\Gamma_{T_2\times\{1\}})=-\frac{r-s}{r'-s'}$, and $s(\Gamma_{T_2\times\{2\}})=-1$ (this thickening exists by Remark \ref{remark1}). Consider the first \textit{continued fraction block} (see Section 4.4.5 in \cite{hondatight1}) of $T_2\times[0,2]$, which is a toric annulus $T_2\times[0,\frac{3}{2}]$ satisfying $s(\Gamma_{T_2\times\{\frac{3}{2}\}})=-\frac{r-(z_1-1)s}{r'-(z_1-1)s'}$. This block admits $z_1$ tight contact structures, of which only two do not have the desired property, namely the two contact structures whose basic slices all have the same sign. Thus $T_2\times[0,\frac{3}{2}]$ admits $z_1-2$ contact structures that satisfy the desired property. Moreover, $T_2\times[\frac{3}{2},2]$ admits $(z_2-1)\cdot\cdot\cdot(z_m-1)$ nonisotopic tight contact structures which remain nonisotopic in $T_2\times[0,2]$. Thus, there are $(z_1-2)(z_2-1)\cdot\cdot\cdot(z_m-1)$ tight contact structures on $T_2\times[0,2]$ (and thus on $V'$) such that the sign of the basic slice $B$ can be either positive or negative after shuffling.

\begin{figure}[h]
\begin{subfigure}{.4\textwidth}
\centering
\includegraphics[scale=.6]{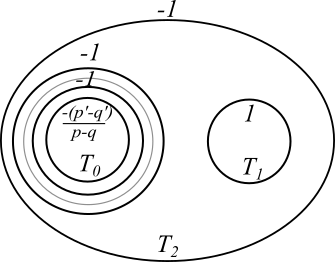}
\caption{}\label{pairofpants2}
\end{subfigure}
\begin{subfigure}{.5\textwidth}
\centering
\includegraphics[scale=.6]{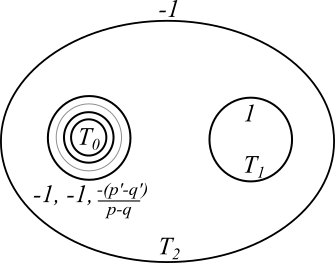}
\caption{}\label{pairofpants3}
\end{subfigure}
\caption{Consider a section of $S^1\times\Sigma$, which is a pair of pants. The numbers represent the slopes of the dividing curves of the corresponding tori. The gray circles represent tori with boundary slope $\infty$. In (B), the slopes below $T_0$ correspond to the black circles from outer- to inner-most.}
\end{figure}

Fix one such contact structure $\xi$ on $Y$. Decompose $Y$ as in the proof of Proposition \ref{prop:verticallegendrian}. Using the same notation as in that proof, recall that the signs of $T_0\times[\frac{3}{2},2]$ and $T_1\times[\frac{k-2}{k},1]$ are different. Choose the sign of $B$ to be the same as the sign of $T_1\times[\frac{k-2}{k},1]$. Following the proof of Proposition \ref{prop:verticallegendrian}, after recutting along $T_1\times\{\frac{k-2}{k}\}$ and appropriately edge rounding, there is a toric annulus $T_0\times[-1,3]$ with $I-$twisting greater than $2l\pi$ and less than $(2l+1)\pi$ such that $s(\Gamma_{T_0\times\{-1\}})=-\frac{p'-q'}{p-q}$ and $s(\Gamma_{T_0\times\{3\}})=-1$. See Figure \ref{pairofpants2}. Thus $\xi|_{T_0\times[-1,3]}$ is one of two possible tight contact structures. 

By edge rounding and recutting, we will be able to isotope $\xi$ so that $\xi|_{T_0\times[-1,3]}$ becomes the other contact structure, which we denote by $\xi'|_{T_0\times[-1,3]}$. To do this, ignore the basic slice $T_0\times[2,3]$, take vertical annuli from $T_0\times\{2\}$ to $T_1$ and from $T_0\times\{2\}$ to $T_2$, and add bypasses to $T_1$ and $T_2$ to obtain toric annuli $T_i\times[0,1]$ such that $T_i\times\{0\}=T_i$ and $s(\Gamma_{T_i\times\{1\}})=\infty$ for $i=1,2$. Notice that $T_2\times[0,1]=B$. Now, by shuffling, choose the opposite sign for the basic slice $B$ than we chose previously. Since the signs of the basic slices of $T_0\times[\frac{3}{2},2]$ and $B$ are the same, by applying Lemma 4.13 in \cite{ghigginischonenberger}, we can edge round (c.f. the proof of Proposition \ref{prop:verticallegendrian}) and thicken the toric annulus $T_1\times[\frac{k-2}{k},1]$ to $T_1\times[\frac{k-2}{k},2]$, where $s(\Gamma_{T_1\times\{2\}})=1$ and $T_1\times[\frac{k-2}{k},2]$ has $I-$twisting equal to $\pi$. As previously, recut $Y$ along $T_1\times\{2\}$, relabel the toric annulus as $T_0\times[-3,-1]$, and glue it to $T_0\times[-1,\frac{3}{2}]$. See Figure \ref{pairofpants3}. Note that $T_0\times[-1,\frac{3}{2}]$ and $T_0\times[-1,3]$ (as constructed in the previous paragraph) have the same boundary data. Moreover, based on the definitions of the two contact structures in question (see Section 5.2 in \cite{hondatight1}), it is easy to see that $\xi|_{T_0\times[-3,\frac{3}{2}]}=\xi'|_{T_0\times[-1,3]}$.\end{proof}

Adapting the proof of Proposition \ref{prop1} to the case of $Y_-$ and, in particular, invoking Corollary \ref{coretwisting}, we obtain the following analogous result. 

\begin{cor} Let $a_i,z_j\ge2$ for all $i,j$ and $a_1\ge3$. Then $Y_-$ admits at most $(a_1-1)\cdot\cdot\cdot(a_n-1)(z_1-1)\cdot\cdot\cdot(z_m-1)$ minimally twisting tight contact structures. Moreover, $Y_-$ admits only odd twisting tight contact structures and in particular for each $l\in\mathbb{Z}^+$, it admits at most $z_1(z_2-1)\cdot\cdot\cdot(z_m-1)$ tight contact structures with twisting $(2l-1)\pi$. By Remark \ref{GTremark}, $Y_-$ admits at most $(a_1-1)\cdot\cdot\cdot(a_n-1)(z_1-1)\cdot\cdot\cdot(z_m-1)+z_1(z_2-1)\cdots(z_m-1)$ tight contact structures with no Giroux torsion.\label{prop2}\end{cor}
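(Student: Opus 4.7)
The plan is to run the proof of Proposition \ref{prop1} essentially verbatim, making two systematic substitutions that account for the gluing map being $-A$ rather than $A$. First, wherever Proposition \ref{prop:verticallegendrian} was invoked, we invoke Corollary \ref{coretwisting} instead; this forces the twisting of any non-minimally twisting $\xi$ on $Y_-$ to be $(2l-1)\pi$ rather than $2l\pi$. Second, the sign analysis at the end of the proof of Proposition \ref{prop:verticallegendrian} that determined the parity of $j$ flips its conclusion: after recutting along $T_1\times\{1\}$, the image of the relevant basic slice under $-A$ picks up an extra sign relative to the $+A$ case, so the two composed basic slices end up with opposite relative signs, forcing $j$ to be odd (and allowing $j=0$, which is precisely the twisting $\pi$ case contributing to the no-Giroux-torsion count).

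For the minimally twisting case, I would apply Proposition \ref{prop:boundaryslopes} to normalize $s(\Gamma_{T_0})=-1$, $s(\Gamma_{T_1})=\frac{p-p'}{q-q'}$, and $s(\Gamma_{T_2})=-1$. By Corollary \ref{coretwisting}, the piece $S^1\times\Sigma' \subset Y'_--V$ contains no vertical Legendrian curve of twisting number $0$, so Lemma 5.1-4c of \cite{hondatight2} gives exactly one tight contact structure on $S^1\times\Sigma'$. Combining this with Theorems 2.2 and 2.3 of \cite{hondatight1} applied to $T_1\times[0,1]$ and $V$ respectively (whose slope data depends only on $[a_1,\ldots,a_n]$ and $[z_1,\ldots,z_m]$, and so agrees with the computation carried out for $Y_+$) produces the upper bound $(a_1-1)\cdots(a_n-1)(z_1-1)\cdots(z_m-1)$.

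For the non-minimally twisting case at twisting $(2l-1)\pi$, decompose $Y'_--V = (S^1\times\Sigma'') \cup (T_0\times[-1,3]) \cup (T_1\times[0,1])$ exactly as in the second half of the proof of Proposition \ref{prop1}, where $T_0\times[-1,3]$ now has $I$-twisting between $(2l-1)\pi$ and $2l\pi$. Since this toric annulus admits exactly two tight contact structures by Section 5.2 of \cite{hondatight1}, $S^1\times\Sigma''$ admits $1$ by Lemma 5.1-4c of \cite{hondatight2}, and $V$ admits $(z_1-1)\cdots(z_m-1)$, we obtain at most $2(z_1-1)\cdots(z_m-1)$ candidates for each $l$. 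The shuffling argument through the basic slice $B = V'-V$ then pairs off $(z_1-2)(z_2-1)\cdots(z_m-1)$ of them isotopically, yielding the stated bound $z_1(z_2-1)\cdots(z_m-1)$; this step depends only on the continued fraction block structure of $T_2\times[0,2] \subset V'$ and is insensitive to the global gluing, so it transcribes directly from Proposition \ref{prop1}.

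The main obstacle is verifying carefully that the parity-of-$j$ conclusion flips from even to odd under the substitution $A \mapsto -A$. This reduces to tracking how $-A$ acts on the sign of the image of the basic slice $T_1\times[\frac{k-1}{k},1]$ inside the extended toric annulus $T_0\times[-1,3]$: the orientation reversal introduced by $-A$ reverses the sign of that basic slice, so the resulting relation between it and $T_0\times[\frac{3}{2},2]$ is opposite to the $Y_+$ case. Once this is confirmed, Honda's classification of tight contact structures on $T^2\times I$ in Section 5.2 of \cite{hondatight1} immediately forces $j$ odd, and the remaining counts follow as in the $Y_+$ argument. Together with Remark \ref{GTremark}, summing the minimally twisting count and the $l=1$ (i.e.\ twisting $\pi$) count gives the claimed bound on tight contact structures with no Giroux torsion.
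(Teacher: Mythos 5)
Your approach matches the paper's, which proves this corollary by the one-line observation that the proof of Proposition \ref{prop1} adapts verbatim once one invokes Corollary \ref{coretwisting} in place of Proposition \ref{prop:verticallegendrian}. One slip worth correcting: in the paper's parameterization the final twisting of the toric annulus is $(j+1)\pi$, and the $Y_+$ argument forces $j$ to be \emph{odd} (giving even twisting $2l\pi$); the extra sign flip contributed by $-A$ for $Y_-$ therefore forces $j$ to be \emph{even}, not odd as you wrote — otherwise your own (correct) observation that $j=0$ is now admissible and realizes the twisting-$\pi$ case would be contradicted, since $0$ is even. With that parity corrected, your transcription of the minimally twisting count, the twisting-$(2l-1)\pi$ count via the shuffling/pairing argument, and the summation via Remark \ref{GTremark} is accurate and agrees with the paper's intended proof.
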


\subsection{The lower bound}

\begin{prop} If $a_i,z_j\ge2$ for all $i,j$ and $a_1\ge3$, then $Y_+$ admits exactly $(a_1-1)\cdot\cdot\cdot(a_n-1)(z_1-1)\cdot\cdot\cdot(z_m-1)$ tight contact structures with no Giroux torsion, all of which are all Stein fillable.   \label{prop3}\end{prop}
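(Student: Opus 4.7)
The upper bound of $(a_1-1)\cdots(a_n-1)(z_1-1)\cdots(z_m-1)$ on tight contact structures with no Giroux torsion is already supplied by Proposition~\ref{prop1}. To conclude, I only need to exhibit that many pairwise non-isotopic Stein fillable contact structures on $Y_+$, since Stein fillability implies tightness and, by Gay's theorem cited in the introduction, absence of Giroux torsion.

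The plan is to construct explicit Stein fillings by putting the plumbing $X_+$ into Legendrian handlebody form. Each vertex carrying framing $-a_i$ (respectively $-z_j$) is realized as a Legendrian unknot with $tb=-a_i+1$ (respectively $tb=-z_j+1$), so that attaching a 2-handle along it with framing $tb-1$ reproduces the correct topological framing. The arm is realized by the standard chain of interlinked Legendrian unknots for tree plumbings. The cycle, whose edges carry $+$-signs, is realized as a ring of Legendrian unknots over a single 1-handle, as in the Stein diagrams for $C_+$ invoked in Theorem~\ref{thm:hondacyclicthm}. For each vertex $v$ of weight $w(v)\in\{a_i,z_j\}$ the rotation number may then be chosen independently in the set $\{-(w(v)-2),-(w(v)-4),\dots,w(v)-2\}$, which has size $w(v)-1$, producing $(a_1-1)\cdots(a_n-1)(z_1-1)\cdots(z_m-1)$ Stein diagrams and hence Stein fillings of $Y_+$.

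To see that these yield pairwise non-isotopic contact structures on $Y_+$, I invoke Corollary~\ref{cobordismcor}, viewing each Stein filling as a Stein cobordism from $(S^3,\xi_{st})$ to $(Y_+,\xi_J)$: it suffices to show that distinct rotation-number vectors induce non-isomorphic canonical spin$^c$ structures on $X_+$, or equivalently non-cohomologous first Chern classes. Since $c_1(X_+,J)$ is represented in the cellular cochain complex on the 2-handles by $\sum_v r(v)\,e_v^*$, this reduces to checking that no two allowable rotation-number vectors differ by an element in the image of the intersection matrix $Q_{X_+}$. The arm contributes a negative-definite tridiagonal block, and the cycle contributes the positive-sign cyclic-plumbing form whose rotation-number quotient already produces the $(a_1-1)\cdots(a_n-1)$ count recorded in Theorem~\ref{thm:hondacyclicthm}. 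A block-matrix argument (the cycle handles and arm handles interact only through the single edge joining $a_1$ or $a_n$ to $z_1$) extends the cycle's non-degeneracy to the full $Y_+$ setting.

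The principal technical obstacle is the cycle: unlike in a tree plumbing, the rotation numbers of the $a_i$-vertices satisfy a global compatibility constraint coming from the 1-handle used to close the cycle, and this is exactly what cuts the naive product count $a_i$ down to $a_i-1$ for the cycle vertices in the $C_+$ case. The cleanest way to organize the proof is therefore to appeal directly to the cyclic case for the Legendrian cycle template, attach the Legendrian chain realizing the arm, and verify that the arm handles contribute independently to $c_1$, so that the counts multiply as claimed.
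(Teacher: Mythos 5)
Your overall strategy matches the paper's: take the upper bound from Proposition~\ref{prop1}, exhibit the same number of Stein diagrams built by stabilizing the standard Legendrian handle picture of $X_+$, distinguish them by first Chern class, and invoke Gay's theorem for absence of Giroux torsion. But your account of \emph{why} the Chern classes are distinct is confused in a way that points at a real misunderstanding, even though the intended conclusion is true.

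First, the count $a_i-1$ is not the result of any ``global compatibility constraint coming from the 1-handle.'' It is simply the number of ways to stabilize a $tb=-1$ Legendrian unknot down to $tb=-a_i+1$, i.e.\ the number of admissible rotation numbers, exactly as in tree plumbings. Nothing about the cycle reduces a ``naive'' count of $a_i$ to $a_i-1$; that count never was $a_i$. Second, comparing $c_1(X_+,J_1)$ and $c_1(X_+,J_2)$ in $H^2(X_+;\ZZ)$ has nothing to do with the intersection matrix $Q_{X_+}$: two cellular cochains represent the same class in $H^2$ iff they differ by a coboundary, i.e.\ an element of $\mathrm{im}(\delta^1)$, where $\delta^1\colon C^1\to C^2$ is the transpose of the cellular boundary map $\partial_2\colon C_2\to C_1$. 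The intersection form would enter only if you were comparing the restrictions of the spin$^c$ structures to $\partial X_+$, which neither Lisca--Matic's Theorem 1.2 nor Corollary~\ref{cobordismcor} asks you to do. The actual key fact, which you do not state, is that for a cyclic plumbing $H_1(X_+)\cong\ZZ$: the loop around the cycle survives, so the 1-handle generator is not killed by any 2-handle, hence $\partial_2=0$, hence $\delta^1=0$, hence $H^2(X_+;\ZZ)\cong\ZZ^{n+m}$ is \emph{free on the 2-handle generators}. Given this, distinct rotation-number vectors give distinct classes with no further argument, and the ``block-matrix'' discussion is unnecessary. The paper simply asserts (correctly) that different rotation numbers give different Chern classes and cites Lisca--Matic, rather than passing through Corollary~\ref{cobordismcor}, which is only needed later for the non-Stein-fillable structures. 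Your route via Corollary~\ref{cobordismcor} is legitimate but heavier than necessary, and the justification you give for Chern-class distinctness would need to be replaced by the observation about $H_1(X_+)$ and $\delta^1=0$ above.
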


\begin{proof} We can easily construct $(a_1-1)\cdot\cdot\cdot(a_n-1)(z_1-1)\cdot\cdot\cdot(z_m-1)$ Stein fillable contact structures for $Y_+$, by drawing suitable Kirby diagrams. Start with the obvious Kirby diagram of the plumbing $X_+$ and make every unknot Legendrian with $tb=-1$ and $r=0$, as in Figure \ref{legendrianhandlebodyX1}. Then to ensure we obtain a Stein structure, we must stabilize each $a_i-$framed unknot $a_i-2$ times and each $z_i-$framed unknot $z_i-2$ times. There are $a_i-1$ (resp. $z_i-1$) ways to stabilize each unknot. Thus, there are a total of $(a_1-1)\cdot\cdot\cdot(a_n-1)(z_1-1)\cdot\cdot\cdot(z_m-1)$ ways to stabilize the entire diagram. Since different kinds of stabilizations yield different rotation numbers, the resulting Stein structures have different first Chern classes and so the induced contact structures on the boundary are pairwise nonisotopic (by Theorem 1.2 in \cite{liscamatictightstrs}). Thus there are at least $(a_1-1)\cdot\cdot\cdot(a_n-1)(z_1-1)\cdot\cdot\cdot(z_m-1)$ Stein fillable contact structures on $Y_+$. By Corollary 2.2 of \cite{gayfillabletightstrs}, these contact structures have no Giroux torsion and are thus minimally twisting by Remark \ref{GTremark}. Coupling this with Proposition \ref{prop1}, we obtain the result.\end{proof}

\begin{figure}[h]
\begin{subfigure}{.5\textwidth}
\centering
\includegraphics[scale=.55]{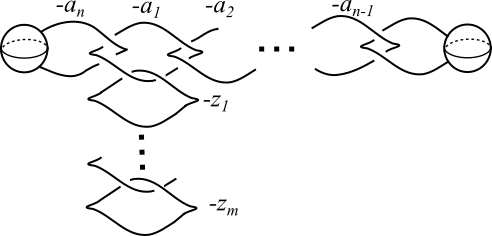}
\caption{Handle body diagram of $X_+$}\label{legendrianhandlebodyX1}
\end{subfigure}
\begin{subfigure}{.48\textwidth}
\centering
\includegraphics[scale=.55]{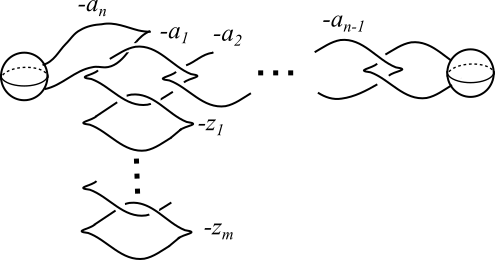}
\caption{Handle body diagram of $X_-$}\label{legendrianhandlebodyX2}
\end{subfigure}
\caption{Handle body diagrams of $X_{\pm}$ with smooth framings}\label{legendrianhandlebodyX}

\end{figure}

\begin{prop} For each $l\in\mathbb{Z}^+$, $Y_+$ admits exactly $z_1(z_2-1)\cdot\cdot\cdot(z_m-1)$ weakly fillable contact structures with twisting $2l\pi$.\label{prop3a} \end{prop}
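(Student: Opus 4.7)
The plan is to construct $z_1(z_2-1)\cdots(z_m-1)$ pairwise nonisotopic weakly fillable contact structures on $Y_+$ with twisting $2l\pi$, matching the upper bound established in Proposition \ref{prop1}. The starting point is the unique universally tight, weakly fillable contact structure $\xi_0^l$ on $C_+$ with $S^1$-twisting $2l\pi$ provided by Theorem \ref{thm:hondacyclicthm}, together with its weak symplectic filling $(X_0,\omega_0)$. I build each target contact structure on $Y_+$ by attaching the arm as a Stein cobordism over $(C_+,\xi_0^l)$.

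The manifold $Y_+$ is obtained from $C_+$ by attaching a chain of $m$ two-handles $h_1,\ldots,h_m$ with framings $-z_1,\ldots,-z_m$, giving a cobordism $W\colon C_+\to Y_+$. The attaching circle $K_1$ of $h_1$ is a meridian of the $-a_1$ sphere in the plumbing and is thus smoothly isotopic to the fiber direction $\gamma$ of the $T^2$-bundle structure on $C_+$; the $K_i$ for $i\ge 2$ are nullhomologous unknots in Darboux balls obtained as meridians of $K_{i-1}$. I fix a minimally twisting contact $T^2\times I$ region inside the $2l\pi$-twisting region of $(C_+,\xi_0^l)$ matching the standard form of Section \ref{steincobordism}, and Legendrian realize $K_1$ via the front projection described there. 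Starting from $tb(K_1)=0$ and stabilizing $z_1-1$ times yields Stein framing $-z_1$ and produces $z_1$ distinct rotation numbers. For $i\ge 2$ I realize $K_i$ as a Legendrian unknot with $tb=-1$ and stabilize $z_i-2$ times, giving $z_i-1$ distinct rotation numbers. By Eliashberg's theorem on Legendrian surgery, each such collection of Legendrian data determines a Stein cobordism $(W,J)$ from $(C_+,\xi_0^l)$ to some $(Y_+,\xi)$, yielding $z_1(z_2-1)\cdots(z_m-1)$ such cobordisms in total.

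To distinguish these, I combine Lemma \ref{chernlemma}, applied to $h_1$ whose attaching circle lies in the distinguished $T^2\times I$, with the standard Chern class computation for nullhomologous handles $h_i$, $i\ge 2$: the first Chern class $c_1(W,J,\tau)$ is represented by a cocycle taking value $r(K_i)$ on $h_i$. A pairing computation in $H^2(W)$ shows that the $z_1(z_2-1)\cdots(z_m-1)$ rotation number assignments produce distinct first Chern classes, hence nonisomorphic canonical spin$^c$ structures on $W$. Since $(C_+,\xi_0^l)$ is weakly fillable, Theorem \ref{thmosfilling} gives that $c(\xi_0^l;[\omega_0]|_{C_+})$ is nontrivial, so Corollary \ref{cobordismcor} forces the resulting contact structures $\xi$ on $Y_+$ to be pairwise nonisotopic. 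Gluing $(W,J)$ onto $(X_0,\omega_0)$ exhibits each $(Y_+,\xi)$ as weakly symplectically fillable.

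Finally, each constructed $\xi$ has twisting at least $2l\pi$ because the contact $T^2\times I$ region of $C_+$ carrying the $2l\pi$ twisting is disjoint from the handle attachments and is preserved in $Y_+$, with the corresponding fiber torus isotopic to the incompressible torus $T$ of $Y_+$; the upper bound from Proposition \ref{prop1} then forces equality. The main obstacle I anticipate is the Chern-class bookkeeping showing that $K_1$, smoothly isotopic to the non-nullhomologous $\gamma$, contributes $z_1$ distinct rotation-number classes that remain distinguishable on $W$ (rather than $z_1-1$ as for a nullhomologous knot in a Darboux ball). This is precisely where Lemma \ref{chernlemma} and the $T^2\times I$ formalism of Section \ref{steincobordism} enter, producing the extra factor of $z_1/(z_1-1)$ that upgrades the Stein-fillable count of Proposition \ref{prop3} to the weakly fillable count claimed here.
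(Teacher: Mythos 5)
Your proposal is correct and follows essentially the same route as the paper: start from the unique universally tight, weakly fillable $\xi_l$ on $C_+$ with $S^1$-twisting $2l\pi$ (weakly fillable by Ding--Geiges), realize the arm as the Legendrian chain $K_1,\ldots,K_m$ in the $T^2\times I$ setup of Section~\ref{steincobordism} via slam-dunks from $\gamma$, and combine Lemma~\ref{chernlemma} with Corollary~\ref{cobordismcor} to distinguish the resulting $z_1(z_2-1)\cdots(z_m-1)$ contact structures, each weakly fillable with the claimed twisting. Your stabilization counts ($z_1-1$ for $K_1$ with $tb=0$ and $z_i-2$ for $K_i$ with $tb=-1$, yielding smooth framing $tb-1=-z_i$) are in fact the correct ones, tightening up the bookkeeping that appears slightly off by one in the paper's text.
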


\begin{proof} 
Let $C=C_+$ denote the $T^2$-bundle over $S^1$ obtained as the boundary of the cyclic plumbing depicted in Figure \ref{generalcyclicplumbing}. Equip $C$ with the unique universally tight contact structure $\xi_l$ with $S^1-$twisting $2l\pi$, where $l\in\mathbb{Z}^+$. Recall, using the notation of Section \ref{sfs}, that $C\cong T^2\times[0,1]/(x,1)$$\sim$$(-Bx,0)$, and $Y=Y_+$ is obtained from $C$ by performing surgery along $\gamma=S^1\times\{\text{pt}\}\times\{\text{pt}\}$, as depicted in Figure \ref{surgerycurve} (using the conventions set up in Section \ref{steincobordism}). To perform this surgery, we remove a solid torus neighborhood of $\gamma$ and glue in solid torus $V$ via the map $g=\begin{pmatrix}
r&r'\\
-s&-s'\\
\end{pmatrix}
$,
where $\frac{r}{s}=[z_1,...,z_m]$, $\frac{r'}{s'}=[z_1,...,z_{m-1}]$, and $-\partial(Y-V)$ is identified with $\mathbb{R}^2/\mathbb{Z}^2$ as in Section \ref{sfs}.

\begin{figure}[h]
\centering
\begin{subfigure}{.45\textwidth}
\centering
\includegraphics[scale=.5]{surgerycurve}
\caption{The surgery curve $\gamma$ in $C$}\label{surgerycurve}
\end{subfigure}
\begin{subfigure}{.45\textwidth}
\centering
\includegraphics[scale=.5]{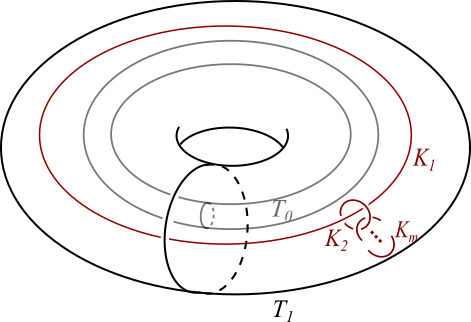}
\caption{The surgery curves $K_1,...,K_m$ after slam dunk}\label{surgerycurve2}
\end{subfigure}
\caption{}\end{figure}

It is known (see, for example, \cite{hondatight1}) that $\xi_l$ is the kernel of the 1-form $\alpha_l=\sin(\phi(t))dx+\cos(\phi(t))dy$, where $\phi'(t)>0$ and $2l\pi\le\sup_{t\in\mathbb{R}/\mathbb{Z}} \phi(t+1)-\phi(t)<(2l+1)\pi$. Moreover, since $Y$ is a hyperbolic torus bundle, the first inequality must also be strict. Thus there exists a toric annulus $T^2\times[0,1]$ with $I-$twisting $\pi$ embedded in $C$ with contact form $\alpha=\sin(\phi(t))dx+\cos(\phi(t))dy$, where $\phi(0)=-\frac{\pi}{2}$ and $\phi(1)=\frac{\pi}{2}$. 

Let $t_0\in(0,1)$ be as in Section \ref{steincobordism} and isotope $\gamma$ so that $\gamma=S^1\times\{pt\}\times\{t_0\}$ is a longitude on the torus $T_{t_0}$. Then it is clearly Legendrian and has twisting number 0 (with respect to the 0-framing). Perform consecutive (reverse) slam dunk moves in $T^2\times(0,1)$ starting with $\gamma$ to obtain a link $K_1\sqcup\cdot\cdot\cdot\sqcup K_m$ in $C$ (see Figure \ref{surgerycurve2}), where $K_1=\gamma$ and $K_i$ has framing $-z_i$ for all $i$. Then $Y$ is obtained by performing $-z_i$-surgery on $K_i$ for all $i$. Take the front projection of $K_1\sqcup\cdot\cdot\cdot\sqcup K_m$ satisfying $tb(K_1)=0$, $tb(K_i)=-1$ for $i\ge2$, and $r(K_i)=0$ for all $i$ (see Figure \ref{frontprojection}). Stabilize $K_1$ $z_1-$times and stabilize $K_i$ $(z_i-1)-$times for $i\ge2$. By Lemma \ref{chernlemma}, by attaching Stein $2-$handles along each $K_i$, we obtain $z_1(z_2-1)\cdot\cdot\cdot(z_m-1)$ distinct Stein cobordisms $(W,J_i)$ from $(C,\xi_l)$ to contact manifolds $(Y,\nu_i)$, where $1\le i\le z_1(z_2-1)\cdot\cdot\cdot(z_m-1)$. In \cite{dinggeiges}, it is shown that $(C,\xi_l)$ is weakly symplectically fillable. Thus by Corollary \ref{cobordismcor}, the contact structures $\nu_i$ are all tight and pairwise nonisotopic. Moreover, by construction, it is clear that $(Y,\nu_i)$ is weakly fillable and has twisting $2l\pi$ for all $i$. 
\end{proof}

\begin{figure}[h]
\centering
\includegraphics[scale=.55]{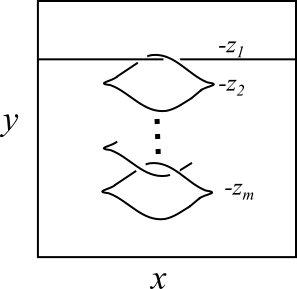}
\caption{The front projection of $K_1\sqcup\cdot\cdot\cdot\sqcup K_m$ with smooth framings}\label{frontprojection}
\end{figure}

%\begin{prop} If $a_i,z_j\ge2$ for all $i,j$ and $a_1\ge3$ and $(a_1,...,a_n)$ is embeddable (as defined in \cite{gollalisca}), then $Y_-$ admits exactly $(a_1-1)\cdot\cdot\cdot(a_n-1)(z_1-1)\cdot\cdot\cdot(z_m-1)+z_1(z_2-1)\cdot\cdot\cdot(z_m-1)$ Stein fillable contact structures. If $(a_1,...,a_n)$ is not embeddable, then $Y_-$ admits exactly $(a_1-1)\cdot\cdot\cdot(a_n-1)(z_1-1)\cdot\cdot\cdot(z_m-1)+z_1(z_2-1)\cdot\cdot\cdot(z_m-1)$ tight contact structures with no Giroux torsion, of which at least $(a_1-1)\cdot\cdot\cdot(a_n-1)(z_1-1)\cdot\cdot\cdot(z_m-1)$ are Stein fillable. \label{prop4}\end{prop}

\begin{prop} Let $a_i,z_j\ge2$ for all $i,j$ and $a_1\ge3$. $Y_-$ admits exactly $(a_1-1)\cdot\cdot\cdot(a_n-1)(z_1-1)\cdot\cdot\cdot(z_m-1)$ minimally twisting tight contact structures, all of which are Stein fillable. For each $l\in\mathbb{Z}^+$, $Y_-$ admits $z_1(z_2-1)\cdot\cdot\cdot(z_m-1)$ weakly fillable contact structures with twisting $(2l-1)\pi$. If $(a_1,...,a_n)$ is embeddable (as defined in \cite{gollalisca}), then the $z_1(z_2-1)\cdot\cdot\cdot(z_m-1)$ contact structures with twisting $\pi$ are also Stein fillable.\label{prop4}\end{prop}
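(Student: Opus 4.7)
The plan is to prove this in three stages, paralleling Propositions \ref{prop3} and \ref{prop3a} but with the appropriate modifications for the $Y_-$ setting, and then invoking Corollary \ref{prop2} to match the upper bound.

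First, for the minimally twisting Stein fillable contact structures, I would mimic the construction in Proposition \ref{prop3}. Starting from the Kirby diagram of $X_-$ shown in Figure \ref{legendrianhandlebodyX2}, realize every unknot as Legendrian with $tb = -1$ and $r = 0$, then stabilize each $(-a_i)$-framed unknot $a_i - 2$ times and each $(-z_j)$-framed unknot $z_j - 2$ times. This yields $(a_1-1)\cdots(a_n-1)(z_1-1)\cdots(z_m-1)$ Stein diagrams; since different stabilizations produce distinct rotation numbers and hence distinct first Chern classes of the fillings, Lisca--Matic \cite{liscamatictightstrs} guarantees that the induced contact structures on $Y_-$ are pairwise nonisotopic. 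By Gay \cite{gayfillabletightstrs} these contact structures have no Giroux torsion, and Remark \ref{GTremark} together with Corollary \ref{prop2} forces them to be exactly the minimally twisting tight contact structures.

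Second, for the weakly fillable contact structures with twisting $(2l-1)\pi$, I would adapt the argument of Proposition \ref{prop3a}, replacing $(C_+,\xi_l)$ with the universally tight weakly fillable contact structure $(C_-, \xi_l)$ on $C_-$ having $S^1$-twisting $(2l-1)\pi$, guaranteed by Theorem \ref{thm:hondacyclicthm}. The manifold $Y_-$ is obtained from $C_-$ by the same surgery description along $\gamma = S^1 \times \{pt\} \times \{t_0\}$ (the sign change only affects the monodromy of the $T^2$-bundle, not the surgery curve's local structure in $T^2\times I$). After the slam-dunk sequence to produce a Legendrian link $K_1 \sqcup \cdots \sqcup K_m$ with $tb(K_1)=0$ and $tb(K_i) = -1$ for $i \geq 2$, the available stabilizations give $z_1(z_2-1)\cdots(z_m-1)$ distinct Stein cobordisms $(W,J_i)$ from $(C_-, \xi_l)$ to $(Y_-, \nu_i)$. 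Since $(C_-,\xi_l)$ is weakly symplectically fillable by \cite{dinggeiges}, Corollary \ref{cobordismcor} (combined with Lemma \ref{chernlemma} to verify distinct Chern classes of the induced spin$^c$ structures) then shows the $\nu_i$ are pairwise nonisotopic tight contact structures. By construction each $\nu_i$ is weakly fillable and has twisting $(2l-1)\pi$, so the upper bound from Corollary \ref{prop2} is attained.

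Third, for the embeddability hypothesis: when $(a_1,\ldots,a_n)$ is embeddable, Theorem \ref{thm:hondacyclicthm} promotes the universally tight $\xi_1$ on $C_-$ with $S^1$-twisting $\pi$ to a Stein fillable contact structure. Re-running the cobordism construction of the previous paragraph with this Stein filling stacked underneath the Stein cobordism $W$ produces Stein fillings of each of the $z_1(z_2-1)\cdots(z_m-1)$ contact structures $\nu_i$ on $Y_-$ with twisting $\pi$; these are already distinguished as contact structures by the preceding step.

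The main obstacle I expect is the second stage: even though the argument is formally parallel to the $Y_+$ case, one must verify that the sign change in the monodromy $-B$ versus $B$ does not spoil the identification of the $T^2\times I$ neighborhood containing $\gamma$, that the Stein cobordism $(W,J_i)$ really has the claimed spin$^c$-structures distinguished by rotation numbers (which is precisely where Lemma \ref{chernlemma} is used), and that the resulting $\nu_i$ indeed have twisting exactly $(2l-1)\pi$ (rather than accidentally jumping to a higher value). Verifying the twisting calculation requires carefully tracking the contact form through the slam dunks and $2$-handle attachments, and ensuring that the neighborhood of $\gamma$ coming from $\xi_l$ is genuinely a toric annulus of $I$-twisting $(2l-1)\pi$ surviving into $Y_-$.
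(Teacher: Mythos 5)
Your second and third stages match the paper's proof closely (the cobordism construction from $(C_-,\xi_l)$ via slam-dunks and Lemma \ref{chernlemma}, then stacking the Golla--Lisca Stein filling underneath when $(a_1,\dots,a_n)$ is embeddable). The gap is in your first stage, specifically in the claim that ``By Gay these contact structures have no Giroux torsion, and Remark \ref{GTremark} together with Corollary \ref{prop2} forces them to be exactly the minimally twisting tight contact structures.'' Remark \ref{GTremark} for $Y_-$ only says that no Giroux torsion is equivalent to \emph{either} minimally twisting \emph{or} twisting $\pi$; it does not promote ``no Giroux torsion'' to ``minimally twisting.'' So the $M_1:=(a_1-1)\cdots(a_n-1)(z_1-1)\cdots(z_m-1)$ Stein fillable contact structures coming from Figure \ref{legendrianhandlebodyX2} could, a priori, include some with twisting $\pi$, and then you would not have exhibited $M_1$ minimally twisting ones.

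If you intend a counting argument against the upper bounds of Corollary \ref{prop2}, it requires an extra step you have not supplied: you would have to show that the $M_1$ Kirby-diagram contact structures are disjoint from the $M_2:=z_1(z_2-1)\cdots(z_m-1)$ cobordism ones. Twisting $\pi$ is compatible with Stein fillability (it is still Giroux-torsion zero), so the two families could in principle overlap, in which case the total count is less than $M_1+M_2$ and the counting does not force the dichotomy. The paper closes this gap with a direct argument rather than counting: the Stein domain $X_-$ factors as Legendrian surgery first on the horizontal chain (giving a Stein filling of $(C_-,\xi)$) and then on the vertical chain; by Theorem 3.1 of Golla--Lisca, $\xi$ on $C_-$ is virtually overtwisted; by Honda's classification such contact $T^2$-bundles are minimally twisting; and minimal twisting persists to $Y_-$ after the surgery along the vertical arm. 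You need this (or an explicit disjointness argument) to legitimately conclude that the $M_1$ contact structures from the Kirby diagram are minimally twisting.
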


\begin{proof} As in the proof of Proposition \ref{prop3}, we can easily exhibit $(a_1-1)\cdot\cdot\cdot(a_n-1)(z_1-1)\cdot\cdot\cdot(z_m-1)$ pairwise nonisotopic contact structures as the boundaries of the Stein domains obtained by stabilizing the obvious handle body diagram depicted in Figure \ref{legendrianhandlebodyX2}. We now argue that these contact structures are minimally twisting. We know by \cite{gayfillabletightstrs}, that they have no Giroux torsion, but there is still the possibility that some of these contact structures have twisting $\pi$. Let $C=C_-$ denote the $T^2$-bundle over $S^1$ obtained as the boundary of the cyclic plumbing depicted in Figure \ref{generalcyclicplumbing}. Equip $C$ with a contact structure $\xi$ that is induced by a Stein structure on the plumbing. Such a Stein domain has a handle description consisting of the $1-$handle and the horizontal chain of unknots (with additional stabilizations) depicted in Figure \ref{legendrianhandlebodyX2}. By performing Legendrian surgery along the vertical chain of unknots (with additional stabilizations) in Figure \ref{legendrianhandlebodyX2}, we obtain $Y$ along with one of the contact structures in question. Moreover, all of the contact structures in question can be obtained this way. By Theorem 3.1 in \cite{gollalisca}, the induced contact structure on $C$ is virtually overtwisted and by \cite{hondatight2}, such contact $T^2$-bundles are minimally twisting. Thus the contact structures in question must also be minimally twisting.

Now, proceeding as in the proof of Proposition \ref{prop3a}, we obtain $z_1(z_2-1)\cdot\cdot\cdot(z_m-1)$ weakly fillable contact structures with twisting $(2l-1)\pi$ for all $l\in\mathbb{Z}^+$. Using the notation in the proof of Proposition \ref{prop3a} suitably adapted to our current context, if $(a_1,...,a_n)$ is embeddable, then by \cite{gollalisca}, $(C,\xi_1)$ is Stein fillable. Thus, by gluing the Stein filling together with the various Stein cobordisms $(W_i,J_i)$ from $(C,\xi_1)$ to $\{(Y,\nu_i)\}$, we see that the tight contact structures on $Y$ with twisting $\pi$ are Stein fillable. \end{proof}

\section{Some explicit Stein fillings of $Y_-$}\label{Y_-filling}

In this section, we will give a general description of the Stein fillings of $(Y_-,\nu_i)$, where the $\nu_i$ are the $z_1(z_2-1)\cdot\cdot\cdot(z_m-1)$ Stein fillable contact structures with twisting $\pi$ described at the end of the proof of Proposition \ref{prop4}. This description is similar to the description of the symplectic fillings of the canonical contact structure on lens spaces described by Lisca in \cite{lisca}. We first give smooth descriptions of the Stein fillings of hyperbolic $T^2$-bundles over $S^1$ found in \cite{gollalisca}. Let $C=C_-$ be the boundary of the negative cyclic plumbing with framings $(-a_1,...,-a_n)$ shown in Figure \ref{generalcyclicplumbing} such that $(a_1,...,a_n)$ is embeddable (as defined in \cite{gollalisca}). Consider the obvious surgery description of $C$. Then by blowing up with $1$-framed unknots and continually blowing down any resulting $-1$-framed unknots, we can obtain a surgery description of the dual graph (c.f. \cite{neumann}) with framings $(d_1,...,d_k)$. Denote the unknot with framing $d_i$ by $K_i$. Since $(a_1,...,a_n)$ is embeddable, there exists a blowup $(c_1,...,c_k)$ of $(0,0)$ such that $c_i\le d_i$ for all $i$. If we blow down $K_i$ $(d_i-c_i)$-times, we obtain the surgery description of $C$ in Figure \ref{fillingdescription}.

\begin{figure}[h!]
\centering
\includegraphics[scale=.5]{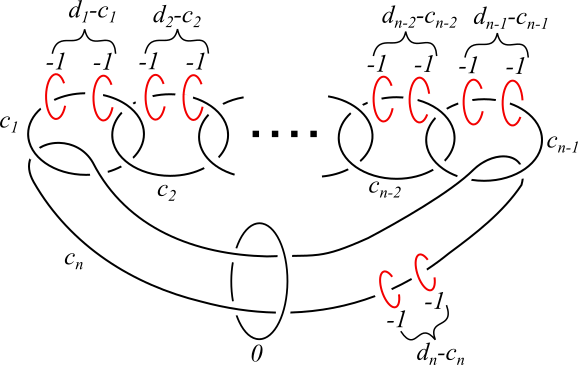}
\caption{A surgery diagram of $C$}\label{fillingdescription}
\end{figure}

To obtain a handle body diagram of the smooth filling of $C$, blow down the sequence $(c_1,...,c_k)$ appropriately until the chain becomes two $0-$framed unknots. Notice, we will be left with the $0$-framed Borromean rings along with the image of the $-1$-framed red curves, which are now complicated knots with various negative framings. Finally, change the two $0-$framed unknots resulting from blowing down the sequence $(c_1,...,c_k)$ to dotted circles. Then the resulting 4-manifold, $D$, is bounded by $C$. In \cite{gollalisca}, $D$ is given a Stein structure that induces the universally tight contact structure $\xi_1$ on $C$.

Similarly, let $Y=Y_-$ be the boundary of the negative cyclic plumbing shown in Figure \ref{generalsingularplumbing}. Then, consider its obvious surgery diagram and follow the above steps for the cycle portion. We will then obtain Figure \ref{fillingdescription} along with a chain of unknots with framings $(-z_2,...,-z_m)$ dangling from the image of the $-z_1$-framed unknot, which links $K_1\sqcup\cdot\cdot\cdot \sqcup K_n$ in a complicated way. Once again, to obtain the smooth filling of $Y$, on which we can place $z_1(z_2-1)\cdot\cdot\cdot(z_m-1)$ Stein structures, blow down the sequence $(c_1,...,c_k)$ appropriately until the chain becomes two $0-$framed unknots and then change those unknots to dotted circles. To see the various Stein structures, one would need to arrange the diagram appropriately. Since the induced contact structures are obtained via Legendrian surgery on $(C,\xi_1)$, by the remarks in the proof of Proposition \ref{prop4}, these contact structures are indeed the $\nu_i$. 

Drawing these diagrams is intractable in general, but in easy situations, it is manageable. For example, consider the cyclic plumbing $C$ in Figure \ref{cycle1}. Consider the obvious surgery diagram of the boundary. By blowing up with two $+1$-unknots located on either side of the leftmost $-3$-unknot and then blowing down consecutive $-1$-framed unknots, we obtain the Borromean rings with framings $0, k+3,$ and $j+3$. Next, blow up the unknots with framings $k+3$ and $j+3$, $(k+3)-$times and $(j+3)-$times, respectively. Finally turn the resulting two $0-$framed unknots into dotted circle notation. By isotoping appropriately, we obtain the handle body diagram depicted in Figure \ref{filling1}. By the arguments in \cite{gollalisca}, $C$ admits a Stein structure that induces the universally tight contact structure $\xi_1$. By computing the $d_3$-invariants of the three possible contact structures on $C$, it is easy to see that the Stein diagram we have drawn in Figure \ref{filling1} induces $\xi_1$. Similarly, if we begin with the plumbing $Y_-$ in Figure \ref{cycle2} and apply the same moves, we obtain the handle body diagram depicted in Figure \ref{filling2}. After stabilizing appropriately, we obtain $z_1(z_2-1)\cdot\cdot\cdot(z_m-1)$ nonisotopic tight contact structures on $Y_-$. Since these contact structures are obtained by Legendrian surgery on $(C,\xi_1)$, thus these contact structures are the $\nu_i$.

\begin{figure}[h]
\centering
\begin{subfigure}{.45\textwidth}
\centering
\includegraphics[scale=.55]{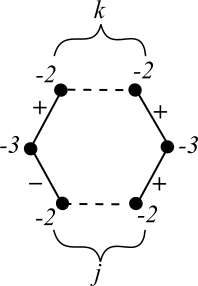}
\caption{The boundary of this plumbing is $C$.}\label{cycle1}
\end{subfigure}
\begin{subfigure}{.45\textwidth}
\centering
\includegraphics[scale=.55]{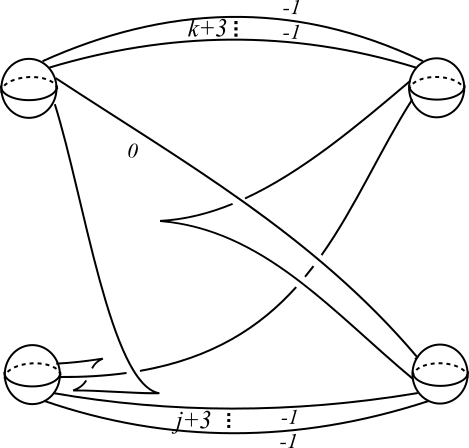}
\caption{Stein handlebody diagram of the filling of $(C,\xi_1)$. The framings are smooth framings.}\label{filling1}
\end{subfigure}
\begin{subfigure}{.45\textwidth}
\centering
\includegraphics[scale=.55]{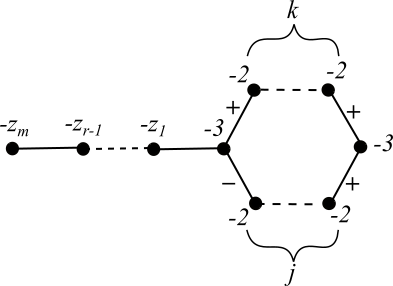}
\caption{The boundary of this plumbing is $Y_-$.}\label{cycle2}
\end{subfigure}
\begin{subfigure}{.45\textwidth}
\centering
\includegraphics[scale=.55]{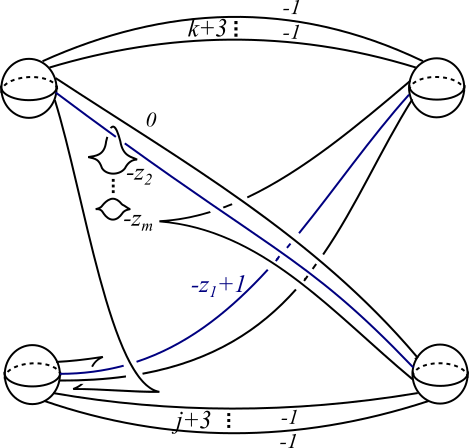}
\caption{Stein handlebody diagrams (without all stabilizations) of the fillings of $(Y,\nu_i)$ for all $i$. The framings are smooth framings.}\label{filling2}
\end{subfigure}
\caption{Stein fillings with smooth framings}
\end{figure}

\clearpage

\section{Appendix}\label{appendix}

Here we prove the some minor facts about continued fractions that are used throughout Section \ref{fillable}. Let $\frac{p}{q}=[a_1,...,a_n]$ and $\frac{p'}{q'}=[a_1,...,a_{n-1}]$, where $a_i\ge 2$ for all $i$.

\begin{lem} If $a_1\ge 3$, then $p\ge 2q+1>q+q'+1$.\label{lem1}\end{lem}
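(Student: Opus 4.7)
The plan is to exploit the standard linear recursion satisfied by the convergents of a negative continued fraction. Writing $\frac{p_k}{q_k} = [a_1, \ldots, a_k]$ with the conventions $p_0 = 1$, $p_{-1} = 0$, $q_0 = 0$, $q_{-1} = -1$, one has the recursion
\[
p_k = a_k p_{k-1} - p_{k-2}, \qquad q_k = a_k q_{k-1} - q_{k-2},
\]
so that $p = p_n$, $q = q_n$, $p' = p_{n-1}$, $q' = q_{n-1}$. I would verify these initial conditions on the first two convergents as a sanity check, then use the recursion for everything else.

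First I would prove the easier half $2q + 1 > q + q' + 1$, which is just $q > q'$. Since $q_0 = 0 < 1 = q_1$, an induction using $q_k - q_{k-1} = (a_k - 1)q_{k-1} - q_{k-2} \ge q_{k-1} - q_{k-2} > 0$ (valid because $a_k \ge 2$) shows that the sequence $(q_k)$ is strictly increasing for $k \ge 1$. In particular $q = q_n > q_{n-1} = q'$.

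For the main inequality $p \ge 2q + 1$, the trick I would use is to set $r_k := p_k - 2q_k$. Subtracting twice the $q$-recursion from the $p$-recursion shows that $r_k$ satisfies the same linear recursion
\[
r_k = a_k r_{k-1} - r_{k-2},
\]
and the initial values are $r_0 = 1$ and $r_1 = a_1 - 2 \ge 1$ (this is precisely where the hypothesis $a_1 \ge 3$ enters). Then I would prove by induction that the sequence $(r_k)_{k \ge 0}$ is nondecreasing and bounded below by $1$: assuming $r_{k-1} \ge r_{k-2} \ge 1$,
\[
r_k = a_k r_{k-1} - r_{k-2} \ge 2r_{k-1} - r_{k-2} = r_{k-1} + (r_{k-1} - r_{k-2}) \ge r_{k-1} \ge 1.
\]
Evaluating at $k = n$ gives $p - 2q = r_n \ge 1$, i.e.\ $p \ge 2q + 1$, and combining with $q > q'$ yields the desired chain $p \ge 2q + 1 > q + q' + 1$.

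There is no real obstacle here; the only subtlety is correctly bookkeeping the sign conventions for negative continued fractions and ensuring the initial conditions are compatible with the convention $\frac{p'}{q'} = [a_1, \ldots, a_{n-1}]$ used in Section \ref{sfs}. Once the recursion is set up, both inequalities reduce to parallel one-line inductions on the common linear recurrence $x_k = a_k x_{k-1} - x_{k-2}$ with $a_k \ge 2$.
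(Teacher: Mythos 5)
Your proof is correct, and it proves something slightly stronger than what is asked, but it takes a genuinely different route from the paper. The paper peels off the \emph{first} term of the continued fraction: writing $\frac{q}{t}=[a_2,\ldots,a_n]$, so that $p = a_1 q - t$, it notes $t<q$ and $q'<q$ and simply rearranges $p = (a_1-2)q + q + (q-t) \ge 2q+1 > q+q'+1$, a single-step computation with no induction. You instead peel off the \emph{last} term, setting up the convergent recursion $p_k = a_k p_{k-1} - p_{k-2}$, $q_k = a_k q_{k-1} - q_{k-2}$, introducing the auxiliary sequence $r_k = p_k - 2q_k$ (which satisfies the same recursion), and running an induction on $k$ to show $r_k \ge 1$ and $(q_k)$ strictly increasing. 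Your argument is somewhat longer and requires bookkeeping of initial conditions, but it yields more: the inequality $p_k \ge 2q_k + 1$ holds for \emph{every} partial convergent, not just the final one, and the method transparently shows where the hypothesis $a_1 \ge 3$ enters (it sets $r_1 = a_1 - 2 \ge 1$). The paper's proof is more economical but produces only the endpoint inequality. Both are perfectly valid; the paper's is the one a reader expects in an appendix of ``minor facts.''
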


\begin{proof}  Let $t$ be unique the integer satisfying $\frac{q}{t}=[a_2,...,a_n]$. Note that $t,q'<q$. Thus $p=a_1q-t=(a_1-2)q+q+(q-t)\ge2q+1>q+q'+1$.\end{proof}

\begin{lem} $\frac{q'\pm1}{q}$ is a reduced fraction if and only if $\frac{p\mp1}{q}$ is a reduced fraction. Moreover, we have that $(q'\pm1, q)=(p\mp1,q)$. \label{lem2}\end{lem}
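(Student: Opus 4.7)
The plan is to use the single identity $p'q - q'p = 1$ that the authors have already recorded, together with the fact that it forces $\gcd(p,q)=1$. I would phrase the whole statement as the claim that $\gcd(q'\pm 1, q) = \gcd(p\mp 1, q)$, from which the ``reduced fraction'' dichotomy is immediate (both gcds equal $1$ or both exceed $1$).

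First I would rewrite $p'q-q'p=1$ as the congruence $q'p \equiv -1 \pmod{q}$. Multiplying through by $p$ then gives the two key congruences
\[
(q'+1)p \equiv p-1 \pmod{q}, \qquad (q'-1)p \equiv -(p+1) \pmod{q}.
\]
These are really just the identities $(q'+1)p = (p-1) + p'q$ and $(q'-1)p = -(p+1) + p'q$, which one can also verify by direct substitution using $p'q - q'p = 1$.

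Now suppose $d \mid q$. Since $(p,q)=1$, also $(p,d)=1$, so $d$ divides $(q'\pm 1)p$ if and only if $d$ divides $q'\pm 1$. By the congruences above, $d$ divides $(q'+1)p$ iff $d$ divides $p-1$, and $d$ divides $(q'-1)p$ iff $d$ divides $p+1$. Hence the set of divisors of $q$ that also divide $q'+1$ coincides with the set of divisors of $q$ that also divide $p-1$ (and similarly with $-$ and $+$ swapped). Taking the largest such divisor gives $\gcd(q'+1,q)=\gcd(p-1,q)$ and $\gcd(q'-1,q)=\gcd(p+1,q)$, which is the ``Moreover'' statement, and the biconditional follows by specializing to the case when this common gcd is $1$.

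This is essentially a one-step calculation once the congruence $q'p \equiv -1 \pmod q$ is in hand, so no real obstacle is anticipated; the only mild subtlety is remembering to use $(p,q)=1$ in order to freely cancel $p$ modulo divisors of $q$.
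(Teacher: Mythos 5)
Your argument is essentially the paper's: both start from $p'q - q'p = 1$, rewrite it as $(q'+1)p = p'q + (p-1)$ and $(q'-1)p = p'q - (p+1)$ (the paper phrases these as identities, you as congruences mod $q$), and then compare divisors of $q$. You are a bit more explicit than the paper about invoking $\gcd(p,q)=1$ to cancel the factor of $p$, which is a worthwhile clarification, but the route is the same. (Minor verbal slip: you add or subtract $p$ from $q'p \equiv -1$, rather than ``multiplying through by $p$.'')
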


\begin{proof} Since $p'q-q'p=1$, we have that $p'q-(q'+1)p=-(p-1)$ and $p'q-(q'-1)p=p+1$. Thus, if $d$ divides any two elements of $\{q, q'+1,p-1\}$, it must divide the third. Similarly, if $d$ divides any two elements of $\{q, q'-1, p+1\}$, it must divide the third. The result follows.\end{proof}

\begin{lem} $\frac{p-p'}{q-q'}=[a_1,...,a_n-1]$\label{lem3}\end{lem}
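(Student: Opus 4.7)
The plan is to reduce the identity to the standard two-term recurrence satisfied by the numerators and denominators of a Hirzebruch--Jung continued fraction. Writing $p_k/q_k=[a_1,\ldots,a_k]$, one checks (most cleanly by expressing each truncation as a product of $2\times 2$ matrices $\bigl(\begin{smallmatrix}a_j & -1\\ 1 & 0\end{smallmatrix}\bigr)$ and equating first columns) that
\[p_k=a_kp_{k-1}-p_{k-2},\qquad q_k=a_kq_{k-1}-q_{k-2},\]
with initial data $p_0=1,\ q_0=0,\ p_1=a_1,\ q_1=1$. In particular $p_{n-1}=p'$ and $q_{n-1}=q'$, while $p=p_n$ and $q=q_n$ satisfy $p=a_np'-p_{n-2}$ and $q=a_nq'-q_{n-2}$.

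Now I would simply subtract $p'$ from the first identity and $q'$ from the second, obtaining
\[p-p'=(a_n-1)p'-p_{n-2},\qquad q-q'=(a_n-1)q'-q_{n-2}.\]
But by the very same recurrence, these right-hand sides are exactly the numerator and denominator produced by running the recursion on the length-$n$ sequence $(a_1,\ldots,a_{n-1},a_n-1)$: all earlier convergents are unchanged, and only the final step is modified, with $a_n$ replaced by $a_n-1$. Hence $(p-p',\,q-q')$ is the canonical numerator/denominator pair of $[a_1,\ldots,a_{n-1},a_n-1]$, which yields the claim.

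The only incidental point is to verify that $q-q'\neq 0$, so that the quotient is well-defined; since each $a_i\geq 2$, the recurrence together with $q_0=0,\ q_1=1$ forces $q_k>q_{k-1}\geq 1$ for all $k\geq 2$, so $q-q'\geq 1$. I do not foresee any substantive obstacle: the entire argument is a few lines of bookkeeping with the continued-fraction recurrence, and the same template (specialize the recurrence at $k=n$, then subtract) handles the companion identity $\frac{r-s}{r'-s'}=[z_m,\ldots,z_1-1]$ of Lemma~\ref{lem4}.
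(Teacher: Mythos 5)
Your argument is correct, and it takes a genuinely different route from the paper's. The paper proves Lemma~\ref{lem3} by strong induction on the denominator $q$: it peels $a_1$ off the \emph{front} of the continued fraction, writing $p=a_1k-t$, $p'=a_1k'-t'$ where $\tfrac{k}{t}=[a_2,\ldots,a_n]$ and $\tfrac{k'}{t'}=[a_2,\ldots,a_{n-1}]$, applies the inductive hypothesis to the shorter fraction (whose denominator $t<k$), and then re-assembles. Your proof instead works from the \emph{back}: it invokes the standard two-term recurrence $p_k=a_kp_{k-1}-p_{k-2}$, $q_k=a_kq_{k-1}-q_{k-2}$ (equivalently the $2\times2$ matrix factorization), notes that $p-p'=(a_n-1)p'-p_{n-2}$ and $q-q'=(a_n-1)q'-q_{n-2}$, and observes that these are precisely the terminal convergent data for $[a_1,\ldots,a_{n-1},a_n-1]$ since all earlier convergents are unchanged. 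Your approach is arguably cleaner and more transparent: it avoids the bookkeeping needed to check the denominator strictly decreases in the induction, and the key identity drops out in one line once the recurrence is in hand. The paper's inductive version has the mild advantage of being self-contained, not presupposing the Wallis--Euler recurrence for negative continued fractions (which the paper never states explicitly). Both are sound; and, as you note, the same template transfers directly to Lemma~\ref{lem4}, though the paper instead derives Lemma~\ref{lem4} from Lemma~\ref{lem3} by a reversal trick.
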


\begin{proof} We will prove this by induction on $q$. First, let $q=2$ and $p>2$ is odd. Then $\frac{p}{2}=[\frac{p+1}{2},2]$ and $\frac{p'}{q'}=\frac{p+1}{2}$ (and in particular, $q'=1$). Then $\frac{p-p'}{q-q'}=\frac{p-1}{2}=[\frac{p+1}{2},1]$. Now, assume the result is true for all fractions satisfying $q\le k-1$. Let $\frac{p}{k}=[a_1,...,a_n]$ so that $\frac{p'}{k'}=[a_1,...,a_{n-1}]$. Furthermore, let $t$ and $t'$ be integers such that $\frac{k}{t}=[a_2,...,a_n]$ and $\frac{k'}{t'}=[a_2,...,a_{n-1}]$. Then by the inductive hypothesis, $\frac{k-k'}{t-t'}=[a_2,...,a_n-1]$. Now, $p=a_1k-t$ and $p'=a_1k'-t'$. Thus, $\frac{p-p'}{k-k'}=\frac{a_1(k-k')-(t-t')}{k-k'}=a_1-\frac{t-t'}{k-k'}=[a_1,...,a_n-1].$\end{proof}

\begin{lem} $\frac{p-q}{p'-q'}=[a_n,...,a_1-1]$\label{lem4}\end{lem}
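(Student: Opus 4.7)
The plan is to proceed by induction on $n$, mirroring the proof of Lemma \ref{lem3}. The key algebraic ingredient is the standard two-term recursion for negative continued fractions: setting $\frac{p}{q} = [a_1,\dots,a_n]$, $\frac{p'}{q'} = [a_1,\dots,a_{n-1}]$, and $\frac{p''}{q''} = [a_1,\dots,a_{n-2}]$ (with the convention $p''/q'' = 1/0$ when $n = 2$), one has
\[
p = a_n p' - p'' \qquad \text{and} \qquad q = a_n q' - q''.
\]
This is the standard convergent recursion, obtained by writing $[a_1,\dots,a_n]$ as a product of $2\times 2$ matrices and peeling off the final factor.

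For the base case $n = 2$, one has $p = a_1 a_2 - 1$, $q = a_2$, $p' = a_1$, $q' = 1$, and a direct computation gives $\frac{p-q}{p'-q'} = \frac{(a_1-1)a_2 - 1}{a_1-1} = a_2 - \frac{1}{a_1 - 1} = [a_2, a_1 - 1]$, as required. For the inductive step, assume the statement holds for all continued fractions of length $\le n-1$. Applying the inductive hypothesis to the pair $\frac{p'}{q'}=[a_1,\dots,a_{n-1}]$ and $\frac{p''}{q''}=[a_1,\dots,a_{n-2}]$ yields $\frac{p' - q'}{p'' - q''} = [a_{n-1},\dots,a_1 - 1]$. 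Peeling off the outermost step of the target continued fraction and substituting then gives
\[
[a_n,\dots,a_1 - 1] = a_n - \frac{1}{[a_{n-1},\dots,a_1-1]} = a_n - \frac{p''-q''}{p'-q'} = \frac{(a_n p' - p'') - (a_n q' - q'')}{p' - q'} = \frac{p-q}{p'-q'},
\]
where the last equality invokes the recursion above. This closes the induction.

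I do not foresee any real obstacle; the lemma reduces to a single application of the continued fraction recursion, with Lemma \ref{lem3} serving as a direct template. The one minor bookkeeping point is to verify that $p' - q' \neq 0$, so that the fraction in question is well-defined; this is immediate from the hypothesis $a_i \ge 2$, which forces $p' > q' \ge 0$ throughout.
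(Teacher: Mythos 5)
Your proof is correct, but it takes a different route from the paper's. The paper deduces Lemma \ref{lem4} from Lemma \ref{lem3} in a few lines, without induction: it computes $[a_1-1,\dots,a_n-1] = \frac{p-p'}{q-q'} - 1 = \frac{p-p'-q+q'}{q-q'}$, then invokes the standard reversal identity for negative continued fractions (if $\frac{m}{l} = [b_1,\dots,b_k]$ is reduced with $0<l<m$, then $[b_k,\dots,b_1] = \frac{m}{l'}$ with $ll' \equiv 1 \bmod m$), and finally verifies via the determinant identity $p'q - q'p = 1$ that the reversed denominator is $p'-q'$, so $[a_n,\dots,a_1-1] = 1 + \frac{p-p'-q+q'}{p'-q'} = \frac{p-q}{p'-q'}$. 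Your approach instead runs a fresh induction on $n$ using the three-term recursion $p = a_n p' - p''$, $q = a_n q' - q''$, peeling the outermost entry $a_n$ off the target continued fraction; this is structurally a mirror image of the paper's proof of Lemma \ref{lem3}, which peels off $a_1$. The paper's method is shorter once one knows the reversal identity, and exhibits $p'-q'$ explicitly as the modular inverse of $q-q'$; your method is more self-contained (no reversal fact needed) and makes clear that Lemmas \ref{lem3} and \ref{lem4} are two faces of the same recursion. Both are valid, and your verification that $p'-q'\neq 0$ from $a_i\ge 2$ is the right sanity check to include.
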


\begin{proof} By Lemma \ref{lem4}, we have $\frac{p-p'}{q-q'}=[a_1,...,a_n-1]$. Thus, $[a_1-1,...,a_n-1]=\frac{p-p'}{q-q'}-1=\frac{p-p'-q+q'}{q-q'}$. Let $\frac{p-p'-q+q'}{k}=[a_n-1,...,a_1-1]$, where $k$ is the unique integer satisfying $1<k<p-p'-q+q'$ and $k(q-q')\equiv1\mod(p-p'-q+q')$. We claim $k=p'-q'$. Indeed, $(p'-q')(q-q')=(p-p'-q+q')q'+1$ (since $pq'+1=p'q$). Thus $[a_n,...,a_1-1] = 1+\frac{p-p'-q+q'}{p'-q'}=\frac{p-q}{p'-q'}$.\end{proof}

\bibliographystyle{plain}
\bibliography{Bibliography}

\end{document}